\documentclass[a4paper,11pt]{amsart}
\usepackage{graphicx,amssymb,amstext,amsmath,amsfonts,amsthm}
\usepackage{latexsym}
\usepackage{bbm}
\usepackage{ulem}
\usepackage{nicefrac}
\usepackage{hyperref}
\usepackage{atbegshi}
\usepackage{anysize}
\marginsize{3cm}{2cm}{3cm}{2cm}
\numberwithin{equation}{section}
\theoremstyle{plain}
\newtheorem{thm}{Theorem}[section]
\newtheorem{rem}{Remark}[section]
\newtheorem{lem}{Lemma}[section]    
\newtheorem{prop}{Proposition}[section]
\newtheorem{cor}{Corollary}[section]
\newtheorem{hyp}{Hypothesis}[section]

\newcommand{\<}{\langle}
\renewcommand{\>}{\rangle}

\newcommand{\W}{\mathcal{W}_p}
\newcommand{\Wp}{\mathcal{W}_{p'}}

\newcommand{\CC}{\mathbb{C}}

\newcommand{\EE}{\mathbb{E}}

\newcommand{\NN}{\mathbb{N}}

\newcommand{\PP}{\mathbb{P}}

\newcommand{\RR}{\mathbb{R}}

\DeclareMathOperator{\diag}{\ensuremath{diag}}

\newcommand{\aA}{\mathcal{A}}

\newcommand{\hH}{\mathcal{H}}

\newcommand{\lL}{\mathcal{L}}

\newcommand{\oO}{\mathcal{O}}
\newcommand{\pP}{\mathcal{P}}
\newcommand{\qQ}{\mathcal{Q}}

\newcommand{\sS}{\mathcal{S}}

\newcommand{\uU}{\mathcal{U}}

\newcommand{\wW}{\mathcal{W}}

\newcommand{\fq}{\mathfrak{q}}
\DeclareMathOperator{\trace}{\ensuremath{trace}}

\newcommand{\al}{\alpha}

\newcommand{\e}{\varepsilon}
\newcommand{\la}{\lambda}

\newcommand{\si}{\sigma}

\newcommand{\om}{\omega}

\newcommand{\z}{\zeta}

\newcommand{\pd}{\partial}

\newcommand{\ra}{\rightarrow}

\newcommand{\lra}{\longrightarrow}
\newcommand{\ti}{\tilde}

\newcommand{\ind}{\mathbf{1}}

\newcommand{\lqq}{\leqslant}
\newcommand{\gqq}{\geqslant}

\DeclareMathSymbol{\ophi}{\mathalpha}{letters}{"1E}

\renewcommand{\phi}{\varphi}
\newcommand{\be}{\begin{equation}}
\newcommand{\ee}{\end{equation}}
\newcommand{\ben}{\begin{equation*}}
\newcommand{\een}{\end{equation*}}
\newcommand{\ba}{\begin{equation}\begin{aligned}}
\newcommand{\ea}{\end{aligned}\end{equation}}

\newcommand{\ft}{\mathfrak{t}}

\usepackage[]{color}
\definecolor{DarkGreen}{rgb}{0.1,0.7,0.3}   
\definecolor{DarkBlue}{rgb}{0,0,0.7}
\definecolor{DarkRed}{rgb}{0.95,0,0}
\definecolor{DarkGrey}{rgb}{0.6,0.6,0.6}

\newcommand{\cblue}[1]{\textcolor{DarkBlue}{#1}}

 
\setcounter{tocdepth}{5}

\numberwithin{equation}{section}
\newcommand{\ud}{\mathrm{d}}

\title{Cutoff thermalization for Ornstein-Uhlenbeck systems 
with small L\'evy noise 
in the Wasserstein distance
}

\author{G. Barrera}
\address{University of Helsinki, Finland}
\email{gerardo.barreravargas@helsinki.fi}
\author{M.A. H\"ogele}
\address{Universidad de los Andes, Bogot\'a, Colombia}
\email{ma.hoegele@uniandes.edu.co}
\author{J.C. Pardo}
\address{
 CIMAT. Jalisco S/N, Valenciana, CP $36240$. Guanajuato, Guanajuato, M\'exico.}
\email{jcpardo@cimat.mx}

\begin{document}
\begin{abstract} 
This article establishes \textit{cutoff thermalization} (also known as the \textit{cutoff phenomenon}) for a class of generalized Ornstein-Uhlenbeck systems $(X^\e_t(x))_{t\gqq 0}$ with $\e$-small additive L\'evy noise and initial value $x$.  
The driving noise processes include Brownian motion, $\alpha$-stable L\'evy flights, finite intensity compound Poisson processes, and red noises, and may be highly degenerate. 
\textit{Window cutoff thermalization} is shown under mild generic assumptions; 
that is, we see an asymptotically sharp $\infty/0$-collapse 
of the renormalized Wasserstein distance from the current 
state to the equilibrium measure $\mu^\e$  
along a time window centered on a precise $\e$-dependent time scale $\ft_\e$. 
In many interesting situations such as reversible (L\'evy) diffusions 
it is possible to prove the existence of an explicit, universal, deterministic 
\textit{cutoff thermalization profile}. That is, 
for generic initial data $x$ we obtain the stronger result 
$\mathcal{W}_p(X^\varepsilon_{t_\varepsilon + r}(x), \mu^\varepsilon) \cdot \varepsilon^{-1} \rightarrow K\cdot e^{-q r}$ for any $r\in \mathbb{R}$ as $\varepsilon \rightarrow 0$ for some spectral constants $K, q>0$ and any $p\geqslant 1$ whenever the distance is finite. 
The existence of this limit is characterized 
by the absence of non-normal growth patterns in terms of 
an orthogonality condition on a computable family of generalized eigenvectors of $\qQ$. 
Precise error bounds are given. Using these results, this article provides a complete discussion of the cutoff phenomenon  
for the classical linear oscillator with friction subject to $\e$-small Brownian motion 
or $\alpha$-stable L\'evy flights. Furthermore, we cover the highly degenerate 
case of a linear chain of oscillators in a generalized heat bath at low temperature. 
\end{abstract}

\maketitle
\markboth{Cutoff thermalization of Ornstein-Uhlenbeck type processes in the Wasserstein distance}{
Cutoff thermalization of Ornstein-Uhlenbeck type processes in the Wasserstein distance}
\section{\textbf{Introduction}}\label{intro}

The notion of cutoff thermalization (also known as the cutoff phenomenon or abrupt thermalization in the literature) has gained growing attention in recent years in the physics literature 
with applications to quantum Markov chains \cite{Kastoryano12}, chemical kinetics \cite{BOK10}, 
quantum information processing \cite{Kastoryano13}, the Ising model \cite{LubetzkySly13}, 
coagulation-fragmentation equations \cite{Pego17}\cite{Pego16}, dissipative quantum circuits \cite{JohnssonTicozziViola17} and open quadratic fermionic systems \cite{Vernier20}. 
The term ``cutoff'' was originally coined in 1986 by Aldous \& Diaconis in their celebrated paper \cite{AD} on card shuffling, 
where they observed and conceptualized the asymptotically abrupt collapse of the total variation distance between the current state of their Markov chain and the uniform limiting distribution at a precise deterministic time scale.   

At this point we refrain from giving a full account on the mathematical literature on the cutoff phenomenon and refer to the overview article \cite{DI} and the introduction of \cite{BY1}. 
Standard references in the mathematics literature 
on the cutoff phenomenon for discrete time and space include 
\cite{Al83, AD87, AD, BD92, BLY06, BBF08, CSC08, DGM90, DS, DI87, LL19, La16,LanciaNardiScoppola12, LLP10, LPW,Meliot14, Trefethenbrothers, Yc99}.
As introductory texts on the cutoff phenomenon 
in discrete time and space we recommend \cite{Jonsson97} and Chapter~18 in the monograph~\cite{LPW}.

Although shown to be present in many important Markov chain models, cutoff thermalization is not universal. For instance, 
for reversible Markov chains Y. Peres formulated the widely used \textit{product condition}, that is, the divergence of the product between the mixing time and the spectral gap for growing dimension, see introduction of \cite{Hermon}. The product condition is a necessary condition for pre-cutoff in total variation (see Proposition~18.4 in \cite{LPW}), and a necessary and sufficient
condition for cutoff in the $L^2$ sense (see \cite{CSC08}).
This condition can be used to characterize cutoff for a large class of Markov chains, but it fails in general, see Chapter~18 in \cite{LPW} for the details. 
The alternative condition that the product of the spectral gap and
the maximal (expected) hitting time diverges is studied in \cite{Al89} and [\cite{Hermon19}, Theorem 1]. To the best of our knowledge, there are no general criteria available for the cutoff phenomenon of non-reversible Markov chains in discrete space.

This article establishes just such a criterion for the class of general (reversible and non-reversible) ergodic multidimensional L\'evy-driven Ornstein-Uhlenbeck processes in continuous space and time for small noise amplitude with respect to the (Kantorovich-Rubinstein-) Wasserstein-distance. 
Recall that 
the classical $d$-dimensional Ornstein-Uhlenbeck process is given as the unique strong solution of 
\begin{align}\label{eq: linear Brownian SDE}
\ud X_t^\e = -\qQ X_t^\e\ud t + \e \ud B_t,\qquad X_0^\e= x, \quad \e>0,
\end{align}
where $\qQ$ is a square matrix 
and $B = (B_t)_{t\gqq 0}$ a given $d$-dimensional Brownian motion. For {the definitions} see for instance 
\cite{Pavli, RI}. The marginal $X^\e_t(x)$ at a fixed time $t>0$ has the Gaussian distribution $N(0, \e^2 \Sigma_t)$, where the covariance matrix $\Sigma_t$ has an integral representation given in Theorem~3.1 of \cite{SaYa} or Subsection \ref{ss:ext1} of this article.
Furthermore, if $\qQ$ has eigenvalues with positive real parts, the process $(X^\e_t(x))_{t\gqq 0}$ has the unique limiting distribution $\mu^\e  = N(0, \e^2 \Sigma_\infty)$, where $\Sigma_\infty = \lim_{t\ra\infty} \Sigma_t$, see Theorem 4.1 and 4.2 in \cite{SaYa}. 
Since $\qQ$ has full rank, $\Sigma_\infty$ is known to be invertible. Moreover, 
the Gaussianity of the marginals and the limiting distribution 
leads {to} an explicit formula for the relative entropy  
\begin{equation}\label{eq:explicitentropy}
H(X_t^\e(x)~|~ \mu^\e) = \frac{1}{2}\left(\frac{1}{\e^2}(e^{-\qQ t}x)^*\Sigma^{-1}_\infty (e^{-\qQ t}x) +
\mathrm{Tr}(\Sigma^{-1}_\infty \Sigma_t)-d+
\log \frac{\|\Sigma_\infty\|}{\|\Sigma_t\|}\right),
\end{equation}
where $\|\cdot\|=\textrm{det}$. 
Note that $\mathrm{Tr}(\Sigma^{-1}_\infty \Sigma_t)-d +\log \big(\|\Sigma_\infty\| / \|\Sigma_t\|\big)\ra 0$ for any time scale $t \ra \infty$, thus the first term in formula (\ref{eq:explicitentropy}) turns out to be asymptotically decisive, when $t$ is replaced by some $t_\e \ra \infty$ as $\e \ra 0$. 
In particular, for a positive multiple of the identity, $\qQ = q \cdot I_d, q>0$, and $\ft_\e := q^{-1} |\ln(\e)|$, 
the following dichotomy holds for any $x \neq 0$: 
\begin{equation}\label{eq: cutoffentropy}
H(X_{\delta \cdot \ft_\e}^\e(x)~|~ \mu^\e) 
~\approx_\e ~\frac{1}{2} \bigg(\frac{|\Sigma_\infty^{-\frac{1}{2}} e^{-\qQ \,\delta \cdot \ft_\e}x|}{\e}\bigg)^2
~\propto ~\e^{2(\delta -1)}~\stackrel{\e\ra 0}\lra ~\left.
\begin{cases} 
\infty &\mbox{ for }   \delta \in (0,1)\\
0 &\mbox{ for }  \delta>1
\end{cases}\right\}. 
\end{equation}
The discussion of formula (\ref{eq: cutoffentropy}) for a general asymptotically exponentially stable matrix $-\qQ$ is given in Subsection 6.1 of this article. The fine study of the dichotomy in (\ref{eq: cutoffentropy}) and its dependence on $x$ {for general $\qQ$}, 
is the core of \textit{cutoff thermalization} for relative entropy 
in the context of continuous time and space. 
The main shortcoming of formula (\ref{eq:explicitentropy}) is that it is not robust and hard to generalize to 
\begin{enumerate}
 \item[(I)] general degenerate noise such as the linear oscillator with noise only in the position and 
 \item[(II)] non-Gaussian white L\'evy noise processes or red noise processes, such as $\alpha$-stable L\'evy flights, Poissonian jumps, Ornstein-Uhlenbeck processes, or even deterministic drifts. 
 \end{enumerate}
 Additionally, it is not obvious in general how formula (\ref{eq:explicitentropy}) would imply an analogous dichotomy to the asymptotics in (\ref{eq: cutoffentropy}) for 
 \begin{enumerate}
 \item[(III)] statistically more tractable distances such as the total variation or the Wasserstein distance.
\end{enumerate}
In \cite{BP} items (I) and (II) have been addressed for smooth density situations in the technically demanding total variation distance under natural but statistically hardly verifiable regularization conditions. 
In this article, we study the generalized Ornstein-Uhlenbeck process $X^\e_\cdot(x) = (X^\e_t(x))_{t\gqq 0}$ given as the unique strong solution of the linear ordinary stochastic differential equation with additive L\'evy noise 
\begin{align}\label{eq: linear SDE}
\ud X_t^\e = -\qQ X_t^\e ~\ud t + \e \ud L_t,\qquad X_0^\e= x, 
\end{align}
with the cutoff parameter $\e>0$, where $\qQ$ is a general $d$-dimensional square matrix that has eigenvalues with positive real parts 
and $L = (L_t)_{t\gqq 0}$ is a general (possibly degenerate) 
L\'evy process with values in $\RR^d$.  
The purpose of this article is twofold. First, it establishes \textit{window cutoff thermalization} in the limit of small $\e$ for the family of processes $(X^\e_{\cdot}(x))_{\e \in (0, 1]}$ 
in terms of the renormalized Wasserstein distance whenever the latter is finite and $X^\e_\cdot(x)$ has a unique limiting distribution $\mu^\e$ for each $\e$. The notion of window cutoff thermalization turns out to be a refined and robust analogue 
of the dichotomy (\ref{eq: cutoffentropy}) 
{which addresses the issues (I)-(III)} 
for the renormalized Wasserstein distance, that is, informally, with a limit of the following type 
\begin{equation}\label{eq: delta-window} 
\lim_{\e \ra 0} \wW_p(X^\e_{\delta\cdot \ft_\e}(x), \mu^\e) \cdot \e^{-1} 
 = ~\left.
\begin{cases} 
\infty &\mbox{ for }   \delta \in (0,1)\\
0 &\mbox{ for }  \delta>1
\end{cases}\right\}. 
\end{equation}
Secondly, we study the stronger notion of a \textit{cutoff thermalization profile}, that is, 
the existence of the limit {for any fixed $r\in \RR$} 
\begin{equation}\label{e: introprofile} 
\lim_{\e \ra 0} \wW_p(X^\e_{\ft_\e + r}(x), \mu^\e) \cdot \e^{-1} = \pP_x(r).
\end{equation}
The presence of a cutoff thermalization profile for generic $x$
turns out to be characterized by the absence of non-normal growth effects, that is, the orthogonality of asymptotic ($t\ra \infty$) generalized eigenvectors of the {exponential matrix} $e^{-\qQ t}$. In \cite{BJ, BP} such limits have been studied and characterized for the total variation distance. The limit there, however, turns out to be hard to calculate or even to simulate numerically, 
while in our setting for $p\gqq 1$ the limit (\ref{e: introprofile}) is shown to take the elementary explicit shape 
\begin{align*}\pP_x(r) = K_x \cdot e^{-\fq_x r}, \qquad r\in \RR,\end{align*} 
where the positive constants $K_x$ and $\fq_x$ in general depend on the initial condition {$x$.} 
For generic values of $x$, that is, $x$ having a non-trivial projection on one of the eigenspaces of the eigenvalues of $\qQ$ with smallest real part and highest multiplicity, it turns out to be the spectral gap of $\qQ$. 
In addition, our normal growth characterization is applicable in concrete examples of interest 
such as the linear oscillator. 
The Markovian dynamics of (\ref{eq: linear SDE}) implies 
(whenever regularity assumptions, such as hypoellipticity, are satisfied)
that the probability densities $p^\e_t$ of the marginals $X^\e_t(x)$ 
are governed by the Fokker-Planck or master equation 
\begin{align*}
\pd_t p^{\e}_t &=(\aA^\e)^* p^{\e}_t, 
\end{align*}
where the generator $\aA^\e$ in general amounts to a full-blown unbounded linear integro-differential operator. Therefore state-of-the-art analytic methods, at best, are capable of studying the spectrum of $\aA^\e$ (numerically), which yield 
an upper bound for exponential convergence to the equilibrium $\mu^\e$ for sufficiently large time in the case of the spectrum lying in the left open complex half-plane. See for instance 
\cite{PatiVaid20} Section ``Hypoelliptic Ornstein-Uhlenbeck semigroups'' or Theorem~3.1 in \cite{BK03}. However, these types of results can only establish (qualitative) upper bounds, which do not reflect the real convergence of $p^\e_t$ to the equilibrium distribution $\mu^\e$. It is with more flexible probabilistic techniques (coupling or replica) that it is possible to show cutoff thermalization in this level of generality. 

The first work on cutoff thermalization covering certain equations of the type (\ref{eq: linear Brownian SDE}) 
is by Barrera and Jara \cite{BJ} in 2015 for scalar nonlinear dissipative SDEs 
with a stable state and $\e$-small Brownian motion 
in the unnormalized total variation distance $d_{\mathrm{TV}}$ using coupling techniques. 
The authors show that for this natural $(d=1)$ gradient system, 
there always is a cutoff thermalization profile 
which can be given explicitly in terms of the Gauss error function. 
The follow-up work \cite{BJ1} 
covers cutoff thermalization with respect to the total variation distance 
for (\ref{eq: linear Brownian SDE}) in higher dimensions, 
where the picture is considerably richer, 
due to the presence of strong and complicated rotational patterns. 
Window cutoff thermalization is proved for the general case. 
In addition, the authors precisely characterize  
the existence of a cutoff thermalization profile 
in terms of the omega limit sets appearing 
in the long-term behavior of the 
matrix exponential function $e^{-\qQ t}x$ in Lemma B.2 \cite{BJ1}, 
which plays an analogous role in this article. 
We note that in (\ref{eq: linear Brownian SDE}) and \cite{BJ1} the Brownian perturbation 
is nondegenerate, and hence the examples of the linear oscillator 
or linear chains of oscillators subject to small Brownian motion are not covered there. 
{The results of \cite{BP} mentioned above cover 
cutoff thermalization for (\ref{eq: linear SDE}) 
for nondegenerate noise $\ud L$ in the total variation distance and 
yield many important applications such as the sample processes and the sample mean process. } The proof methods are based on concise Fourier inversion techniques. 
Due to the mentioned regularity issue concerning 
the total variation distance the authors state their results  
under the hypothesis of continuous densities of 
the marginals, which to date is mathematically not characterized 
in simple terms. 
Their profile function is naturally given as a shift error 
of the L\'evy-Ornstein-Uhlenbeck limiting measure for $\e=1$ and 
measured in the total variation distance. 
These quantities are theoretically highly 
insightful, but almost impossible to 
calculate and simulate in examples. 
Their abstract characterization of the existence of 
a cutoff-profile given in \cite{BJ1}, which assesses the behavior of the mentioned profile function on a suitably defined omega limit set, 
is shown to be also valid in our setting (see Theorem \ref{th:profileabstract}). 

While the total variation distance with which the cutoff phenomenon 
was originally stated is equivalent to the convergence in distribution in finite spaces, it is much more difficult to analyze in continuous space and 
is not robust to small non-smooth perturbations. 
There have also been attempts to describe 
the cutoff phenomenon for quantum systems 
in other types of metrics such as the trace norm, see for instance \cite{Kastoryano12}. 
In this context the Wasserstein setting 
of the present article has the following four 
advantages in contrast to the original total variation distance. 

(1) It does not require any regularity 
except some finite $p$-th moment, $p>0$. 
This allows us to treat degenerate noise and to cover second order equations. 
As an illustration we give a complete discussion of cutoff thermalization of the damped linear oscillator in the Wasserstein distance subject to Brownian motion, Poissonian jumps without any regularizing effect, $\al$-stable processes including the Cauchy process and a deterministic perturbation. In the same sense we cover chains of linear oscillators in a generalized heat bath at low temperature. 

 (2) In contrast to the relative entropy and the total variation distance the Wasserstein distance has the particular property of \textit{shift linearity} for $p\gqq 1$, which reduces the rather complicated profile functions of \cite{BJ, BJ1, BP} to a \textit{simple exponential function} with no need for costly and complex simulation. In addition, the profile is \textit{universal} and does not depend on which Wasserstein distance is applied nor on the statistical properties of the noise. For $p\in (0,1)$ shift linearity seems not to be feasible, however we give upper and lower bounds which essentially account for the same. Therefore we may cover the case of the linear oscillator under $\e$-small $\alpha$-stable perturbations including the Cauchy process for $\al=1$. 
 
(3) We also obtain cutoff thermalization for the physical observable finite $p$-th moments, which cannot be directly deduced from any result in  \cite{BJ, BJ1, BP}. Our findings also naturally extend to small red noise and general ergodic perturbations as explained in Section \ref{ss:ext2}. 

(4) Due to the homogeneity structure of the Wasserstein distance we give meaningful asymptotic error estimates and estimates on the smallness of $\e$ needed in order to observe cutoff thermalization on a finite interval $[0, T]$. 

The Wasserstein distance also entails certain minor drawbacks. First, a price to pay is to pass from the unnormalized total variation distance (due to $0$-homogeneity $d_{\mathrm{TV}}(\e U_1, \e U_2) = d_{\mathrm{TV}}(U_1, U_2)$) to the renormalized Wasserstein distance $\W / \e$. This is fairly natural to expect for any distance based on norms such as the $L^p$-norm, $p\gqq 1$ due to the $1$-homogeneity $\W(\e U_1, \e U_2) = \e \W(U_1, U_2)$. The second issue is that concrete evaluations of the Wasserstein distance are complicated in general. For $d=1$ and $1\lqq p < \infty$ the Wasserstein distance has the explicit shape of an $L^p$-distance for the quantiles $F_{U_1}^{-1}$ and $F_{U_2}^{-1}$
\[
\wW_p(U_1, U_2) = \Big(\int_0^1 |F_{U_1}^{-1}(\theta) -F_{U_2}^{-1}(\theta)|^p \ud\theta\Big)^\frac{1}{p}.
\]
However, there are no known higher dimensional counterparts of this formula. While by definition 
Wasserstein distances are minimizers of $L^p$-distances, they are always bounded above by the $L^p$-distance (by the natural coupling); however, lower bounds are typically hard to establish. 

The dynamics of models (\ref{eq: linear Brownian SDE}) 
with small Brownian motion have been studied since the early days of Arrhenius \cite{Arrhenius-89}, Ornstein and Uhlenbeck \cite{UO}, Eyring \cite{Eyring-35} Kramers \cite{Kramers-40}. Since then, an enormous body of physics literature has emerged and we refer to the overview articles \cite{HTB90} on the exponential rates and \cite{GHJM98} on the related phenomenon of stochastic resonance. For an overview on the Ornstein-Uhlenbeck process see \cite{Ja96}.
However, in many situations Brownian motion alone  is too restrictive for modeling the driving noise, 
as laid out in the article by Penland and Ewald \cite{PenlandE-08}, where the authors identify the physical origins of stochastic forcing and discuss the trade off between Gaussian vs. non-Gaussian white and colored noises. 
In particular, heavy-tailed L\'evy noise has been found to be present in physical systems such as 
for instance \cite{bodai2017predictability, ChenWuDuanKurts-19, Ditlevsen-99a, Ditlevsen-99b, DitDit09, GHKM17, sokolov}. 
In the mathematics literature the dynamics of the exit times of ordinary, delay and partial differential equations with respect to such kind of Brownian perturbations is often referred to as Freidlin-Wentzell
theory{.} 
It was studied in \cite{BerglundG-04,BerGen-10, BerGen-12, BovierEGK-04,Br96, Day83, Day96, Fr88, FW3, Siegert} and serves as the base on which metastability and stochastic resonance results are derived, for instance in \cite{BovierGK-05,BarBovMel10, FJL82, GalvesOV-87, SS19}. More recent extensions of this literature for the non-Brownian L\'evy case often including polynomial instead of exponential rates include \cite{DHI13, Godovanchuk-82, HP13, HP15, HP19, Ho19, IP08, IP06, ImkPavSta-10} and references therein. 
A different, recent line of research starting with the works of \cite{BDM11,BCD13,BN15,Budhiraja19, OGH20} treats $\e$-small and simultaneously $1/\e$-intensity accelerated Poisson random measures which yield large deviations for $\e$-parametrized L\'evy processes, also in the context of L\'evy processes, where this behavior typically fails to hold true. 

The paper is organized as follows.
After the setup and preliminary results 
the cutoff thermalization phenomenon is derived in Subsection \ref{ss:derivation}.
The main results on the stronger notion of profile cutoff thermalization are presented 
in Subsection \ref{ss:firstmainresult} followed by the generic results on the weaker notion of window cutoff thermalization in Subsection \ref{ss:secondmainresult}.
Section \ref{s:physicalexamples} is devoted to the applications in physics such as gradient systems and a complete discussion of the linear oscillator and numerical results of a linear Jacobi chains coupled to
 a heat bath.
In Section \ref{s:conceptual} several conceptual examples illustrate  certain mathematical features such as the fact that
leading complex eigenvalues not necessarily destroy the profile thermalization.
Moreover, we highlight
the dependence of the thermalization time scale
on the initial data $x$, and  Jordan block multiplicities of $\qQ$.
In {Section \ref{s:ext}} we discuss the pure Brownian case for relative entropy, the validity of the results for general ergodic driven noises such as red noise and derive conditions on $\e$ for observing the cutoff thermalization on a given finite time horizon. The proofs of the main results are given in the appendix. 


\section{\textbf{The setup}}\label{s:setup}

\subsection{\textbf{The L\'evy noise perturbation $\ud L$}}\label{ss:Levynoise}
Let $L = (L_t)_{t\gqq 0}$ be a L\'evy process with values in $\RR^d$, 
that is, a process with stationary and independent increments starting from $0$ almost surely, and c\`adl\`ag paths (right-continuous with left limits). 
The most prominent examples are the Brownian motion and the compound Poisson process.
For an introduction to the subject we refer to \cite{APPLEBAUM, Sa}.
The characteristic function of the marginal $L_t$ has the following (L\'evy-Khintchine) representation 
for any $t\gqq 0$ 
\[
u \mapsto \EE[e^{i\<u, L_t\>}] = \exp\Big(t \big[i\< u, b\>- \frac{1}{2} \<\Sigma u, u\> + \int_{\RR^d} \big(e^{i\<u, y\>}- 1 -i \<u, y\>\ind\{|y|\lqq 1\}\big)\nu(\ud y)\big]\Big),  
\]
for a drift vector $b\in \RR^d$, $\Sigma$ a $d\times d$ covariance matrix 
and $\nu$ a sigma-finite measure on $\RR^d$ with 
\[
\nu(\{0\}) = 0\qquad \textrm{ and }\qquad \int_{\RR^d} (1\wedge |z|^2) \nu(\ud z) < \infty.  
\]

\begin{hyp}[Finite $p$-th moment]\label{hyp:moments}
For $p>0$
the L\'evy process $L$ has finite $p$-th moments, 
which is equivalent to 
\[
\int_{|z|>1} |z|^p \nu(\ud z) < \infty, 
\]
where $\nu$ is the L\'evy jump measure.
\end{hyp}
\bigskip

\begin{rem}~
\begin{enumerate}
 \item In case of  $L = B$
being a Brownian motion
  Hypothesis \ref{hyp:moments} is true for any $p>0$.
 \item For $p \in (0,2)$ it also covers the case of $\alpha$-stable noise for $\alpha\in (p,2)$. 
Note that the latter only has moments of order $p< \alpha$ and hence no finite variance.
\end{enumerate}
\end{rem}

\subsection{\textbf{The Ornstein-Uhlenbeck process $(X^\e_t(x))_{t\gqq 0}$}}\label{ss:OUP} \hfill\\
We consider the following Ornstein-Uhlenbeck equation subject to $\e$-small L\'evy noise 
\begin{equation}\label{eq:linear}
\ud X^\e_t = -\qQ X^\e_t \ud t + \e  \ud L_t, \qquad X^\e_0 = x,   
\end{equation}
where $\qQ$ is a deterministic $d\times d$ matrix. 
For $\e>0$ and any $x\in \RR^d$
the SDE (\ref{eq:linear})
has a unique strong solution. 
By the variation of constant formula  
\begin{equation}\label{eq:voc}
X^\e_t(x) = e^{-\qQ t} x + \e \int_0^t e^{- \qQ (t-s)} \ud L_s=: e^{-\qQ t} x + \e\oO_t,
\end{equation}
where $\oO_t$ is a stochastic integral which 
is defined in our setting by the integration by parts formula
\[ 
\oO_t=L_t-\int_{0}^{t} e^{-\qQ(t-s)}\qQ L_s\ud s.
\]
In general, for $t> 0$ the marginals $X^\e_t(x)$ may not have densities and are only given in terms of its characteristics due to
the irregular non-Gaussian jump component, see Proposition 2.1 in \cite{MA}.
For the case of pure Brownian noise,
the marginal $X^\e_t(x)$ exhibits a Gaussian density. 
Its mean and covariance matrix are given explicitly in 
Section 3.7 in \cite{Pavli}.
\hfill

\subsection{\textbf{Asymptotic exponential stability of $-\qQ$}}\label{ss:aesQ}

\begin{hyp}[Asymptotic exponential stability of $-\qQ$]
\label{hyp:statibility}
The real parts of all eigenvalues of $\qQ$ are positive. 
\end{hyp}
By formula (\ref{eq:voc}) it is clearly seen, that the fine structure of $e^{-\qQ t}x$ determines its dynamics. In general, calculating matrix exponentials is complicated. For basic properties and some explicit formulas we refer to \cite{APOSTOL}, Chapters 7.10 and 7.14. Roughly speaking, for symmetric $\qQ$ and generic $x\in \RR^d$, $x\neq 0$, the behavior of $e^{-\qQ t}x$ is given by $e^{-\la t} \<v, x\> v + o(e^{-\la t})$ where $\la>0$ is the smallest eigenvalue of $\qQ$ and $v$ is its corresponding eigenvector. 
For asymmetric $\qQ$ the picture is considerably blurred by the occurrence of multiple rotations. 
The complete analysis reads as follows and is carried out in detail in the examples. 

\begin{lem}\label{jara}
Assume Hypothesis \ref{hyp:statibility}. 
Then for any initial value $x \in \mathbb{R}^d$, $x\neq 0$, there exist a rate $\fq:=\fq(x)>0$, 
multiplicities $\ell:=\ell(x)$, $m:=m(x)  \in  \{1,\ldots, d\}$, angles $\theta_1:=\theta_1(x),\dots,$ $\theta_{m}:=\theta_m(x) \in [0,2\pi)$ and a family of linearly independent vectors $v_1:=v_1(x),\dots, v_{m}:=v_m(x)$ in $\mathbb{C}^d$ such that
\begin{equation}\label{eq:asmatexp}
\lim_{t \to \infty} \left |\frac{e^{\fq t}}{t^{\ell-1}} e^{- \qQ t}x - \sum_{k=1}^{m} e^{i  t\theta_k} v_k\right | =0.
\end{equation}
Moreover,
\begin{equation}\label{eq:belowabove}
0<\liminf_{t\rightarrow \infty}\left|\sum_{k=1}^{m} e^{i  t\theta_k} v_k\right|
\lqq 
\limsup_{t\rightarrow \infty}\left|\sum_{k=1}^{m} e^{i  t\theta_k} v_k\right|\lqq 
\sum_{k=1}^{m}  |v_k|.
\end{equation}
The numbers $\{\fq \pm i \theta_k, k=1,\ldots, m\}$ are eigenvalues of the matrix $\qQ$ and the vectors $\{v_k, k=1,\ldots, m\}$ are generalized eigenvectors of $\qQ$. 
\end{lem}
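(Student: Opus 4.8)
The plan is to read everything off the Jordan decomposition of the matrix $e^{-\qQ t}$. Over $\CC$ write $\qQ=PJP^{-1}$ with $J=\mathrm{diag}(J_1,\dots,J_r)$, each block $J_i=\la_i I+N_i$ and $N_i$ nilpotent. Using $e^{-J_i t}=e^{-\la_i t}\sum_{j\gqq 0}\frac{(-t)^j}{j!}N_i^j$ together with the identity $(\qQ-\la_i I)P_i=P_iN_i$ for the column blocks $P_i$ of $P$, one obtains the finite expansion
\[
e^{-\qQ t}x=\sum_{\la\,\in\,\mathrm{spec}(\qQ)}e^{-\la t}\,p_\la(t),
\]
where $p_\la(t)=\sum_{j\gqq 0}\frac{(-t)^j}{j!}(\qQ-\la I)^j u_\la$ and $u_\la$ is the component of $x$ in the generalized eigenspace of $\la$. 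Hence every coefficient of each polynomial $p_\la$ is a generalized eigenvector of $\qQ$ for $\la$ (or zero), $\deg p_\la<d$, and $p_\la\not\equiv 0$ if and only if $u_\la\neq 0$.

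Next I would isolate the leading term. Let $S:=\{\la:p_\la\not\equiv 0\}$, which is nonempty since $x\neq 0$; by Hypothesis \ref{hyp:statibility} the number $\fq:=\min_{\la\in S}\Re\la$ is strictly positive. Among the $\la\in S$ with $\Re\la=\fq$ let $\ell-1:=\max\deg p_\la$, let $\la_1,\dots,\la_m$ be those attaining both $\Re\la_k=\fq$ and $\deg p_{\la_k}=\ell-1$, let $v_k\neq 0$ be the leading coefficient of $p_{\la_k}$, and set $\theta_k:=-\mathrm{Im}\,\la_k$, so that $e^{-\la_k t}=e^{-\fq t}e^{it\theta_k}$. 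Multiplying the expansion by $e^{\fq t}/t^{\ell-1}$: the summands with $\Re\la>\fq$ vanish exponentially, those with $\Re\la=\fq$ but $\deg p_\la<\ell-1$ vanish like a negative power of $t$, and the remaining ones contribute $\sum_{k=1}^m e^{it\theta_k}\big(v_k+O(1/t)\big)$. This yields \eqref{eq:asmatexp}. The $\la_k$ are pairwise distinct eigenvalues, so the nonzero generalized eigenvectors $v_1,\dots,v_m$, lying in distinct generalized eigenspaces, are linearly independent; and since $\qQ$ is real, $\ov{\la_k}=\fq+i\theta_k$ is also an eigenvalue, so $\{\fq\pm i\theta_k\}$ are eigenvalues of $\qQ$, which is the final assertion.

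It remains to prove \eqref{eq:belowabove}. The estimate $\big|\sum_{k=1}^m e^{it\theta_k}v_k\big|\lqq\sum_{k=1}^m|v_k|$ is the triangle inequality for every $t$, giving the $\limsup$ bound (and $\liminf\lqq\limsup$ is trivial). For $\liminf>0$ I would argue by compactness of the torus: choose $t_n\to\infty$ realizing the $\liminf$ of $\big|\sum_k e^{it_n\theta_k}v_k\big|$, pass to a subsequence along which $e^{it_n\theta_k}\to c_k$ for every $k$ (so $|c_k|=1$), and conclude that this $\liminf$ equals $\big|\sum_k c_kv_k\big|$, which is strictly positive because the $v_k$ are linearly independent and the $c_k$ are nonzero.

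Apart from the routine bookkeeping, the one genuinely non-routine point is this last step: it is essential that the $v_k$ be \emph{linearly independent}, not merely nonzero --- otherwise the almost-periodic map $t\mapsto\sum_k e^{it\theta_k}v_k$ could have vanishing $\liminf$ (as $1+\cos t$ does). The remaining mild subtlety is the precise bookkeeping choice of $\fq$ (minimal real part over eigenvalues activated by $x$) and then $\ell$ (maximal polynomial degree among those), which is exactly what forces every $v_k\neq 0$ and makes the data $\fq(x)$, $\ell(x)$, $m(x)$, $\{\theta_k\}$, $\{v_k\}$ well defined.
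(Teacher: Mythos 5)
Your proof is correct. Note that the paper does not actually prove this lemma: it invokes Lemma~B.1 of \cite{BJ1} and merely observes that the coercivity hypothesis assumed there is superfluous. Your argument supplies the missing proof along what is essentially the same (and the standard) route, namely the Jordan/spectral decomposition of $e^{-\qQ t}$ restricted to the generalized eigenspaces activated by $x$, followed by isolating the terms of minimal real part and maximal polynomial degree. The one genuinely delicate point --- that the $\liminf$ in \eqref{eq:belowabove} is strictly positive --- is handled correctly: you extract a subsequence along which each $e^{it_n\theta_k}$ converges to a unimodular $c_k$ and use that the $v_k$, being nonzero generalized eigenvectors attached to \emph{distinct} eigenvalues, are linearly independent, so $\sum_k c_k v_k\neq 0$; as you rightly stress, mere nonvanishing of the $v_k$ would not suffice. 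The only cosmetic mismatch is the normalization $\theta_k\in[0,2\pi)$ claimed in the statement: your $\theta_k=-\mathrm{Im}\,\la_k$ need not lie in that interval, and since $t$ is continuous one cannot reduce $\theta_k$ modulo $2\pi$ without changing the function $t\mapsto e^{it\theta_k}$; this is an imprecision of the lemma as stated (inherited from the cited source) rather than a defect of your argument.
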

The lemma is established as Lemma B.1 in \cite{BJ1}, p. 1195-1196, and proved there. 
It is stated there under the additional hypothesis of coercivity $\< \qQ x, x\> \gqq \delta |x|^2$ for some $\delta>0$ and any $x\in \RR^d$. However, inspecting the proof line by line 
it is seen that the authors only use Hypothesis \ref{hyp:statibility} of the matrix $\qQ$. 
Hence the result is valid under {the sole Hypothesis \ref{hyp:statibility}}. For a detailed {understanding of the computation of the exponential matrix} we refer to the notes of \cite{Wahlen}, in particular, Theorem 22 and Section~3.

\begin{rem}
The precise properties (\ref{eq:asmatexp}) and (\ref{eq:belowabove}) turn out to be crucial for the existence of a cutoff thermalization profile. 
Note that, in general, the limit
\[
\lim_{t\to \infty}\left|\sum_{k=1}^{m} e^{i  t\theta_k} v_k\right|
\]
does not exist. However, if in addition $\qQ$ is symmetric we have $\theta_1=\cdots=\theta_m=0$ and consequently,
\[
\lim_{t\to \infty}\left|\sum_{k=1}^{m} e^{i  t\theta_k} v_k\right|=\left|\sum_{k=1}^{m}  v_k\right|\neq  0.
\]
\end{rem}

\hfill

\subsection{\textbf{The Wasserstein distance $\wW_p$}}\label{ss:wasserstein}
\hfill
Given two probability distributions $\mu_1$ and $\mu_2$ on $\RR^d$
with finite $p$-th moment for some $p>0$, we 
define the Wasserstein distance of order $p$ as follows
\begin{equation}\label{def:wasserstein}
\W(\mu_1,\mu_2)=
\inf_{\Pi} \left(\int_{\RR^d\times \RR^d}|u-v|^p\Pi(\ud u,\ud v)\right)^{\min\{1/p,1\}},
\end{equation}
where the infimum is taken over all joint distributions (also called couplings) $\Pi$ with marginals $\mu_1$ and $\mu_2$. The Wasserstein distance quantifies the distance between probability measures, for an introduction we refer to \cite{Vi09}. 
For convenience of notation we do not distinguish a random variable $U$ and its law $\PP_U$ as an argument of $\W$. That is, for random variables $U_1$, $U_2$ and probability measure $\mu$ we write $\W(U_1, U_2)$ instead of $\W(\PP_{U_1}, \PP_{U_2})$, $\W(U_1, \mu)$ instead of $\W(\PP_{U_1}, \mu)$ etc. 

\begin{lem}[Properties of the Wasserstein distance]\label{lem:invariance}\hfill\\
Let $p>0$, $u_1,u_2\in \RR^d$ be deterministic vectors, $c\in \RR$ and $U_1, U_2$ be random vectors in $\RR^d$ with finite $p$-th moment. Then we have: 
\begin{itemize}
\item[a)] The Wasserstein distance is a metric, in the sense of being definite, symmetric and satisfying the triangle inequality {in the sense of} Definition 2.15 in \cite{Ru76}.
\item[b)] Translation invariance: 
$\W(u_1+U_1,u_2+U_2)=\W(u_1-u_2+U_1,U_2)$.
\item[c)] Homogeneity: 
\[
\W(cU_1,cU_2)=
\begin{cases}
|c|\;\W(U_1,U_2)&\textrm{ for } p\in [1,\infty),\\
|c|^{p}\;\W(U_1,U_2)&\textrm{ for } p\in (0,1).
\end{cases}
\]
\item[d)] Shift linearity: 
For $p\gqq 1$ it follows 
\begin{equation}\label{eq:shitflinearity}
\W(u_1+U_1,U_1)=|u_1|.
\end{equation}
For $p\in(0,1)$ equality (\ref{eq:shitflinearity}) is false in general. However we have the following inequality
\begin{equation}\label{ec:cotaabajop01}
\max\{|u_1|^{p}-2\EE[|U_1|^p],0\}\lqq 
\W(u_1+U_1,U_1)\lqq |u_1|^{p}.
\end{equation}
\item[e)] Domination: For any given coupling $\ti \Pi$ between 
$U_1$ and $U_2$ it follows  
\[
\W(U_1, U_2) \lqq \Big(\int_{\RR^d\times \RR^d} |v_1-v_2|^p \ti \Pi(\ud v_1,\ud v_2)\Big)^{\min\{1/p,1\}}.
\]
\item[f)] Characterization: Let $(U_n)_{n\in \NN}$ be a sequence of random vectors with finite $p$-th moments 
and $U$ a random vector with finite $p$-th moment the following are equivalent: 
\begin{enumerate}
 \item $\W(U_n, U) \ra 0$ as $n\ra \infty$. 
 \item $U_n \stackrel{d}{\lra} U$ as $n \ra \infty$ and $\EE[|U_n|^p] \ra \EE[|U|^p]$ as $n\ra \infty$. \\
\end{enumerate}
\item[g)] Contraction: Let $T:\RR^d \to \RR^k$, $k\in \NN$, be Lipschitz continuous with Lipschitz constant~$1$. Then for any $p>0$ 
\begin{equation}
\W(T(U_1),T(U_2))\lqq \W(U_1,U_2).
\end{equation}
\hfill
\end{itemize}
\end{lem}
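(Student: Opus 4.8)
I would prove the seven items one at a time, in each case transporting the infimum defining $\W$ through an explicit bijection on the set of couplings and invoking only the elementary behaviour of $r\mapsto r^p$ and $r\mapsto r^{\min\{1/p,1\}}$ on $[0,\infty)$.

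\emph{Items (b), (c), (e), (g), and the easy part of (a).} Symmetry and definiteness in (a) are immediate: swapping the two coordinates of a coupling gives $\W(\mu_1,\mu_2)=\W(\mu_2,\mu_1)$, and $\W(\mu_1,\mu_2)=0$ forces a coupling supported on the diagonal, hence $\mu_1=\mu_2$. For (b) and (c) I would note that for fixed $u_1,u_2\in\RR^d$ the affine map $(u,v)\mapsto(u_1+u,u_2+v)$ — and, for $c\neq0$, the map $(u,v)\mapsto(cu,cv)$, the case $c=0$ being trivial — is a measurable bijection from couplings of $(\PP_{U_1},\PP_{U_2})$ onto couplings of $(\PP_{u_1+U_1},\PP_{u_2+U_2})$, respectively $(\PP_{cU_1},\PP_{cU_2})$, under which the cost transforms as $|(u_1+u)-(u_2+v)|^p=|(u_1-u_2+u)-v|^p$, respectively $|cu-cv|^p=|c|^p|u-v|^p$; taking the infimum and then the outer $\min\{1/p,1\}$-power yields (b) and (c), the case split in (c) reflecting $(|c|^pt)^{1/p}=|c|\,t^{1/p}$ for $p\ge1$ versus $(|c|^pt)^1=|c|^pt$ for $p\in(0,1)$. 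Item (e) holds because $\tilde\Pi$ is an admissible competitor in the infimum and $r\mapsto r^{\min\{1/p,1\}}$ is non-decreasing. For (g), pushing a coupling $\Pi$ of $(\PP_{U_1},\PP_{U_2})$ forward by $(T,T)$ produces a coupling of $(\PP_{T(U_1)},\PP_{T(U_2)})$ with cost $\int|T(u)-T(v)|^p\,\Pi(\ud u,\ud v)\le\int|u-v|^p\,\Pi(\ud u,\ud v)$ by the $1$-Lipschitz bound, so passing to the infimum gives the contraction. For the triangle inequality in (a) I would use the gluing lemma: given couplings $\Pi_{12}$ of $(\mu_1,\mu_2)$ and $\Pi_{23}$ of $(\mu_2,\mu_3)$, disintegrate both along $\mu_2$ and glue to obtain the law of a triple $(V_1,V_2,V_3)$ with $(V_1,V_2)\sim\Pi_{12}$ and $(V_2,V_3)\sim\Pi_{23}$; for $p\ge1$ the pair $(V_1,V_3)$ couples $(\mu_1,\mu_3)$ and Minkowski's inequality gives $(\EE|V_1-V_3|^p)^{1/p}\le(\EE|V_1-V_2|^p)^{1/p}+(\EE|V_2-V_3|^p)^{1/p}$, while for $p\in(0,1)$ the outer power is the identity and pointwise subadditivity $|a+b|^p\le|a|^p+|b|^p$ applied to $V_1-V_3=(V_1-V_2)+(V_2-V_3)$ does the job; optimising over $\Pi_{12},\Pi_{23}$ finishes it, allowing the value $+\infty$ consistently with the metric notion of Definition~2.15 in \cite{Ru76}.

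\emph{Shift linearity (d).} The upper bounds $\W(u_1+U_1,U_1)\le|u_1|$ for $p\ge1$ and $\le|u_1|^p$ for $p\in(0,1)$ come from evaluating the infimum at the synchronous coupling $(u_1+U_1,U_1)$. For the matching lower bound when $p\ge1$: every coupling $(V_1,V_2)$ of $(\PP_{u_1+U_1},\PP_{U_1})$ has $\EE[V_1-V_2]=u_1$ (well defined, since $p\ge1$ forces a finite first moment), so Jensen's inequality applied first to the convex map $r\mapsto r^p$ and then to the norm gives $(\EE|V_1-V_2|^p)^{1/p}\ge\EE|V_1-V_2|\ge|\EE[V_1-V_2]|=|u_1|$; the infimum is thus $\ge|u_1|$, hence $=|u_1|$. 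For $p\in(0,1)$, where $\EE[V_1-V_2]$ may be undefined and (\ref{eq:shitflinearity}) genuinely fails, I would instead use the reverse triangle inequality $|V_1-V_2|^p=|u_1+(V_1-u_1-V_2)|^p\ge|u_1|^p-|(V_1-u_1)-V_2|^p$ together with $|(V_1-u_1)-V_2|^p\le|V_1-u_1|^p+|V_2|^p$ and the distributional identities $V_1-u_1\stackrel{d}{=}U_1\stackrel{d}{=}V_2$, which yield $\EE|V_1-V_2|^p\ge|u_1|^p-2\,\EE|U_1|^p$; combined with the trivial bound $\W\ge0$, this gives (\ref{ec:cotaabajop01}).

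\emph{Characterisation (f), and the main obstacle.} Here I would invoke the classical equivalence between convergence in a Wasserstein metric and weak convergence together with convergence of the corresponding moment functional (see, e.g., \cite{Vi09}): for $p\ge1$ this is precisely the assertion that $\W(U_n,U)\to0$ iff $U_n\stackrel{d}{\lra}U$ and $\EE|U_n|^p\to\EE|U|^p$. For $p\in(0,1)$ I would observe that $\rho(u,v):=|u-v|^p$ is a metric on $\RR^d$ inducing the Euclidean topology, so $(\RR^d,\rho)$ is Polish and $\W$ coincides with the $L^1$-Wasserstein distance over $(\RR^d,\rho)$; applying the same theorem in that space and reading $\int\rho(\cdot,0)\,\ud\mu_n\to\int\rho(\cdot,0)\,\ud\mu$ as $\EE|U_n|^p\to\EE|U|^p$ closes the case. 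The only genuinely delicate points I anticipate are the lower bound in (d) for $p\in(0,1)$ — pinning down the constant $2$ and applying the distributional identities correctly — and checking that the reference used for (f) covers the $p\in(0,1)$ regime verbatim once one passes to the metric $\rho$; the remaining items are routine manipulations of couplings.
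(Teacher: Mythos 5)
Your proposal is correct, and in two places it follows a genuinely different route from the paper's proof. The most notable divergence is item g): you push an arbitrary coupling $\Pi$ of $(U_1,U_2)$ forward by $(T,T)$ to obtain an admissible competitor for $(T(U_1),T(U_2))$ and conclude by the $1$-Lipschitz bound on the cost; the paper instead invokes the Kantorovich duality (Theorem 5.10 in \cite{Vi09}), takes the optimal dual pair $(\phi_*,\psi_*)$ for the transformed variables and checks that $(\phi_*\circ T,\psi_*\circ T)$ is admissible for the original ones. Your primal argument is shorter, avoids the existence of dual maximizers, and works uniformly in $p>0$; the two proofs are of course equivalent in content. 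For item d) with $p\in(0,1)$ you run the reverse triangle inequality pointwise inside a fixed coupling, using $V_1-u_1\stackrel{d}{=}U_1\stackrel{d}{=}V_2$, whereas the paper applies the triangle inequality at the level of $\W$ itself via $|u_1|^{p}=\W(u_1,0)\lqq\W(u_1,u_1+U_1)+\W(u_1+U_1,U_1)+\W(U_1,0)$; these are the unintegrated and integrated versions of the same estimate and both pin down the constant $2$. Your lower bound for $p\gqq1$ (mean of the displacement equals $u_1$ under any coupling, then Jensen twice) is exactly the paper's argument. Finally, where the paper simply cites \cite{Vi09} for the triangle inequality and \cite{Vi09}, \cite{FJR20} for item f), you supply self-contained substitutes: the gluing lemma with Minkowski (resp.\ subadditivity of $r\mapsto r^p$) for a), and for f) with $p\in(0,1)$ the observation that $\W$ is the order-$1$ Wasserstein distance over the Polish space $(\RR^d,|\cdot|^p)$, so the $p\gqq1$ characterization applies verbatim there. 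Both of these reductions are sound and arguably make the lemma more self-contained than the paper's version.
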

\noindent {The proof of Lemma \ref{lem:invariance} is given in Appendix \ref{ap:A}. }

\begin{rem}
\begin{enumerate}
 \item Property d) is less widely known and turns out
to be crucial to simplify the thermalization profile for $p\gqq 1$ from a complicated stochastic quantity to a deterministic exponential function, while still being useful for $p\in (0,1)$. 
 \item In general, the projection of a vector-valued Markov process to single coordinates is known to be non-Markovian. However, not surprisingly property g) allows to estimate the Wasserstein distance of its projections. This is used in Subsection \ref{ss:ext2} for degenerate systems and mimics the analogous property for the total variation distance given in Theorem~5.2 in \cite{devro}.
\end{enumerate}
\end{rem}

\begin{lem}[{Wasserstein approximation of the total variation distance}]\label{lem:totalvariation}
Let $U_1$ and $U_2$ be two random variables taking values on $\RR^d$.
Assume that there exists $p\in (0,1)$ small enough such that $U_1$ and $U_2$ possesses finite $p$-th moments.
Then
\[
d_{\mathrm{TV}}(U_1,U_2)=\lim\limits_{p'\to 0} \Wp(U_1,U_2).
\]
\end{lem}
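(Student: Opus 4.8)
The plan is to realize both sides as limits/infima over couplings and interchange limits carefully. Recall that for $p' \in (0,1)$ the cost function $(u,v)\mapsto |u-v|^{p'}$ is itself a metric on $\RR^d$ bounded above by $1$ wherever $|u-v|\le 1$, and that $\Wp(U_1,U_2) = \inf_\Pi \int_{\RR^d\times\RR^d} |u-v|^{p'}\,\Pi(\ud u,\ud v)$ since the outer exponent $\min\{1/p',1\}=1$ for $p'<1$. The total variation distance admits the coupling characterization $d_{\mathrm{TV}}(U_1,U_2) = \inf_\Pi \Pi(\{u\neq v\}) = \inf_\Pi \int \ind\{u\neq v\}\,\Pi(\ud u,\ud v)$, where the infimum runs over the same set of couplings with marginals $\PP_{U_1}$ and $\PP_{U_2}$.

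First I would prove the lower bound $d_{\mathrm{TV}}(U_1,U_2) \le \liminf_{p'\to 0}\Wp(U_1,U_2)$. Fix any coupling $\Pi$ with the correct marginals that has finite $p$-th moment cost for the small $p$ in the hypothesis (such couplings exist, e.g. the independent coupling, since both marginals have finite $p$-th moments). For $p' \le p$ we have $|u-v|^{p'} \ge \ind\{|u-v|\ge 1\}$ trivially, but more usefully, on the set $\{u \neq v\}$ one has $|u-v|^{p'}\to 1$ pointwise as $p'\to 0$, and $|u-v|^{p'} \le 1 + |u-v|^{p}$ for $p'\le p$, which is $\Pi$-integrable. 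Hence by dominated convergence $\int |u-v|^{p'}\,\Pi(\ud u,\ud v) \to \Pi(\{u\neq v\})$ as $p'\to 0$. Since $\Wp(U_1,U_2)\le \int |u-v|^{p'}\,\Pi$, taking $\limsup_{p'\to0}$ gives $\limsup_{p'\to 0}\Wp(U_1,U_2) \le \Pi(\{u\neq v\})$; infimizing over $\Pi$ yields $\limsup_{p'\to 0}\Wp(U_1,U_2)\le d_{\mathrm{TV}}(U_1,U_2)$. In particular the $\limsup$ side of the desired equality is already under control.

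For the matching lower bound $d_{\mathrm{TV}}(U_1,U_2)\le \liminf_{p'\to0}\Wp(U_1,U_2)$, I would argue by a compactness/selection argument on near-optimal couplings. For each $p'$ choose $\Pi_{p'}$ with $\int |u-v|^{p'}\,\Pi_{p'} \le \Wp(U_1,U_2) + p'$. The family $(\Pi_{p'})$ has fixed marginals, hence is tight, so along a subsequence $p'_n\to 0$ we have $\Pi_{p'_n}\Rightarrow \Pi_0$ weakly for some coupling $\Pi_0$ of $\PP_{U_1},\PP_{U_2}$. Since $\ind\{u\neq v\}$ is lower semicontinuous and $0\le \ind\{u\neq v\}\le \liminf_n |u-v|^{p'_n}$ pointwise (indeed $|u-v|^{p'_n}\to 1 = \ind\{u\neq v\}$ when $u\neq v$ and $=0=\ind\{u\neq v\}$ when $u=v$, uniformly enough on compacts away from the diagonal... this is the delicate point), a Fatou/portmanteau argument gives $d_{\mathrm{TV}}(U_1,U_2)\le \Pi_0(\{u\neq v\}) \le \liminf_n \int |u-v|^{p'_n}\,\Pi_{p'_n} = \liminf_n \Wp[p'_n](U_1,U_2)$. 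Combining the two bounds, the full limit exists and equals $d_{\mathrm{TV}}(U_1,U_2)$.

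The main obstacle is the second (lower-bound) direction: the function $|u-v|^{p'_n}$ does not converge uniformly to $\ind\{u\neq v\}$ near the diagonal (for $|u-v|$ tiny but positive, $|u-v|^{p'_n}$ is close to $1$ only for $n$ very large depending on $|u-v|$), so one cannot naively pass the liminf through the weak limit; one must instead lower-bound $|u-v|^{p'_n}$ by a \emph{fixed} continuous-or-l.s.c. test function $g_\delta$ with $\ind\{|u-v|\ge\delta\}\le g_\delta \le |u-v|^{p'_n}$ for all large $n$, apply portmanteau to get $\int g_\delta \,\ud\Pi_0 \le \liminf_n \Wp[p'_n]$, then let $\delta\to 0$ using $\int \ind\{|u-v|\ge\delta\}\,\ud\Pi_0 \uparrow \Pi_0(\{u\neq v\})$ by monotone convergence, and finally bound $d_{\mathrm{TV}}$ below... above... by $\Pi_0(\{u\neq v\})$. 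Carrying out this $\delta$-truncation cleanly — in particular checking that $\Wp[p'_n](U_1,U_2)$ is finite for all small $p'_n$, which follows from Hypothesis on the finite $p$-th moment and Jensen since $p'_n \le p$ — is the technical heart of the argument; everything else is routine coupling bookkeeping.
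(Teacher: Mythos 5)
Your proposal is correct, and on the decisive half of the argument it takes a genuinely different (and stronger) route than the paper. The easy direction coincides: both you and the paper fix an arbitrary coupling $\pi$, dominate $|u-v|^{p'}\lqq 1+|u-v|^{p}$ for $p'\lqq p$ (integrable since $|u-v|^p\lqq|u|^p+|v|^p$ and both marginals have finite $p$-th moments), apply dominated convergence, and infimize over $\pi$ to get $\limsup_{p'\to 0}\wW_{p'}(U_1,U_2)\lqq d_{\mathrm{TV}}(U_1,U_2)$. The divergence is in the reverse inequality $d_{\mathrm{TV}}(U_1,U_2)\lqq\liminf_{p'\to0}\wW_{p'}(U_1,U_2)$. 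The paper's printed argument for this direction simply rewrites the same chain $\int\ind\{u_1\neq u_2\}\,\ud\pi\gqq\lim_{p'\to0}\wW_{p'}(U_1,U_2)$ for each fixed $\pi$ and then infimizes over $\pi$ on the left, which reproduces the \emph{same} inequality $\lim_{p'\to0}\wW_{p'}\lqq d_{\mathrm{TV}}$ rather than its converse; as written it does not close the equality. Your compactness argument is exactly what is needed here: take a subsequence $p'_n\to0$ realizing the $\liminf$, pick near-optimal couplings $\Pi_{p'_n}$, use tightness (fixed marginals) and Prokhorov to extract a weak limit $\Pi_0$, which is again a coupling of $\PP_{U_1}$ and $\PP_{U_2}$; then for each $\delta\in(0,1)$ the bound $|u-v|^{p'_n}\gqq\delta^{p'_n}\ind\{|u-v|>\delta\}$ with $\delta^{p'_n}\to1$, combined with the portmanteau inequality for the open set $\{|u-v|>\delta\}$, gives $\liminf_n\wW_{p'_n}(U_1,U_2)\gqq\Pi_0(\{|u-v|>\delta\})$, and letting $\delta\downarrow0$ yields $\liminf_n\wW_{p'_n}(U_1,U_2)\gqq\Pi_0(\{u\neq v\})\gqq\inf_\Pi\Pi(\{u\neq v\})=d_{\mathrm{TV}}(U_1,U_2)$ (the last bound is indeed ``$\gqq$'', since $d_{\mathrm{TV}}$ is the infimum of $\Pi(\{u\neq v\})$ over couplings and $\Pi_0$ is one particular coupling — this resolves the ``below\dots above'' hesitation in your text). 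You correctly identify the lack of uniform convergence of $|u-v|^{p'}$ to $\ind\{u\neq v\}$ near the diagonal as the obstruction and the $\delta$-truncation as the fix; what your route buys is an actual proof of the hard inequality, at the cost of invoking Prokhorov's theorem, whereas the paper's route stays entirely at the level of dominated convergence but, in the form printed, only establishes one of the two inequalities.
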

\noindent The content of this lemma is announced in Section 2.1 in \cite{Panaretos}. The proof is given in Appendix~\ref{ap:A}.

\begin{rem}
Assume that for any $x\neq 0$ and $p\in (0,1)$ the formula 
$\W(x+\oO_\infty,\oO_\infty)=  |x|^p$ is valid. 
By Lemma \ref{lem:totalvariation} we have
\begin{align*}
d_{\mathrm{TV}}(x+\oO_\infty, \oO_\infty)=\lim\limits_{p'\to 0}
\Wp(x+\oO_\infty, \oO_\infty)=\lim\limits_{p'\to 0}|x|^{p'}=1.
\end{align*}
Hence for any $x\neq 0$, $d_{\mathrm{TV}}(x+\oO_\infty,\oO_\infty)=1$ which in general false 
whenever $\oO_\infty$ has a continuous positive density in $\RR^d$, for instance, for $\oO_\infty$ being  $\alpha$-stable  with index $\alpha\in (0,2]$. In other words, $\W(x+\oO_\infty,\oO_\infty)=  |x|^p$ breaks down for $p$ sufficiently small in all smooth density situations.
\end{rem}

\subsection{\textbf{Limiting distribution $\mu^\e$}}\label{ss:limiting}
We fix $\e>0$. By Proposition 2.2 in \cite{MA}, Hypotheses \ref{hyp:moments} and~\ref{hyp:statibility} yield the existence of a unique equilibrium distribution $\mu^{\e}$ and its characteristics are given there. 
Moreover, the limiting distribution $\mu^\e$ has finite $p$-th moments. 
{It is} the distribution of $\e\oO_\infty$, 
where $\oO_\infty$ is the limiting distribution of $\oO_t$ as $t\to \infty$ (with respect to the weak convergence). In fact, it follows the stronger property. 

\begin{lem}\label{lem:erg5}
Let Hypotheses \ref{hyp:moments} and~\ref{hyp:statibility} be satisfied. Then for any $x\neq 0$, $\e>0$ and $0<  p'\lqq p$ we have $\Wp(X^\e_t(x),\mu^\e)\to 0$ as $t\to \infty$. 
\end{lem}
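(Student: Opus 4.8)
The plan is to run a synchronous coupling of the Ornstein--Uhlenbeck flow started from $x$ against one started in equilibrium, which reduces the statement to the exponential decay of the matrix $e^{-\qQ t}$. Fix $\e>0$. By Proposition~2.2 in \cite{MA} (recalled in Subsection~\ref{ss:limiting}) the measure $\mu^\e$ is invariant for the Markov semigroup of (\ref{eq:linear}) and has a finite $p$-th moment. On the probability space carrying $L$, let $Y$ be a random vector with law $\mu^\e$ that is independent of $L$, and consider the two strong solutions of (\ref{eq:linear}) driven by the \emph{same} noise with initial data $x$ and $Y$, namely $X^\e_\cdot(x)$ and $X^\e_\cdot(Y)$. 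Subtracting the variation-of-constants formula (\ref{eq:voc}) for the two initial conditions, the stochastic integrals cancel, so
\[
X^\e_t(x) - X^\e_t(Y) = e^{-\qQ t}(x-Y), \qquad t\gqq 0,
\]
while $X^\e_t(Y)$ has law $\mu^\e$ for every $t\gqq 0$ by invariance. In particular $\big(X^\e_t(x),X^\e_t(Y)\big)$ is a coupling of $\mathrm{Law}(X^\e_t(x))$ and $\mu^\e$.

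Next I would invoke the domination property (Lemma~\ref{lem:invariance}~e)) to bound the Wasserstein distance by the $L^{p'}$-cost of this coupling,
\[
\Wp\big(X^\e_t(x),\mu^\e\big)\lqq \Big(\EE\big[\,|e^{-\qQ t}(x-Y)|^{p'}\,\big]\Big)^{\min\{1/p',\,1\}},
\]
and then let $t\to\infty$. Since $-\qQ$ is asymptotically exponentially stable (Hypothesis~\ref{hyp:statibility}), $\|e^{-\qQ t}\|_{\mathrm{op}}\to 0$ and $M:=\sup_{t\gqq0}\|e^{-\qQ t}\|_{\mathrm{op}}<\infty$, so that $|e^{-\qQ t}(x-Y)|^{p'}\lqq M^{p'}|x-Y|^{p'}$ pointwise; moreover $\EE[\,|x-Y|^{p'}\,]<\infty$ because $p'\lqq p$ and $\mu^\e$ has a finite $p$-th moment (using subadditivity of $u\mapsto u^{p'}$ for $p'\in(0,1)$, convexity for $p'\gqq1$, and Jensen). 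Dominated convergence then yields $\EE[\,|e^{-\qQ t}(x-Y)|^{p'}\,]\to 0$, which completes the argument.

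An equivalent route closer to the structure emphasised in Lemma~\ref{lem:invariance} is to split $\Wp(X^\e_t(x),\mu^\e)\lqq \Wp(e^{-\qQ t}x+\e\oO_t,\e\oO_t)+\Wp(\e\oO_t,\e\oO_\infty)$, bound the first summand by shift linearity (Lemma~\ref{lem:invariance}~d)), which gives $|e^{-\qQ t}x|$ for $p'\gqq1$ and $\lqq|e^{-\qQ t}x|^{p'}$ for $p'\in(0,1)$, hence $\to 0$ by Hypothesis~\ref{hyp:statibility}, and reduce the second by homogeneity (Lemma~\ref{lem:invariance}~c)) to $\Wp(\oO_t,\oO_\infty)\to 0$, which one obtains from a coupling together with the tail estimate $\EE\big[\,|\oO_\infty-\int_0^t e^{-\qQ s}\ud L_s|^{p'}\,\big]=\EE\big[\,|e^{-\qQ t}\oO_\infty|^{p'}\,\big]\to 0$ (here $\oO_t\stackrel{d}{=}\int_0^t e^{-\qQ s}\ud L_s$ by time reversal and stationarity of the increments). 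In either version the only load-bearing inputs are the finiteness of the $p'$-th moment of $\mu^\e$ (Hypothesis~\ref{hyp:moments}) and the decay $\|e^{-\qQ t}\|_{\mathrm{op}}\to 0$ (Hypothesis~\ref{hyp:statibility}). I do not expect a genuine obstacle; the one point requiring a little care is to keep the argument uniform over all $0<p'\lqq p$ — in particular in the heavy-tailed range $p'<1$, where $\Wp$ scales nonlinearly and is merely a metric rather than a norm-induced distance, so that $L^2$-type estimates are unavailable and it is essential that the synchronous coupling reduces everything to the single quantity $\EE[\,|x-Y|^{p'}\,]$.
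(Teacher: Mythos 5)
Your proposal is correct and follows essentially the same route as the paper: the paper's proof is exactly a synchronous coupling (two solutions driven by the same noise differ by $e^{-\qQ t}(x-y)$), followed by integrating the resulting bound against $\mu^\e(\ud y)$ (the ``disintegration'' step, which is your coupling with random initial datum $Y\sim\mu^\e$) and using the exponential decay $|e^{-\qQ t}|\lqq C_0e^{-q_*t}$ together with the finite $\min\{1,p'\}$-th moment of $\mu^\e$. The only cosmetic difference is that the paper bounds $\Wp(X^\e_t(x),X^\e_t(y))$ for deterministic $y$ first and then averages, whereas you couple directly with the stationary initial condition; both yield the same final estimate.
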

\begin{proof}
 First note, there exist positive constants $q_*$ and $C_0$ such that $|e^{-\qQ t}|\lqq C_0 e^{-q_* t}$ for any $t\gqq 0$ due to the usual Jordan decomposition and the estimate 
\begin{equation}\label{eq:initialgrowth}
|e^{-\qQ t}|\lqq  
\left(\sup_{s\gqq 0}
\max\limits_{0\lqq j\lqq d-1}
\frac{s^{j}}{j!}e^{-(q-q_*) s}\right) e^{-q_* t}=C_0e^{-q_* t},
\end{equation}
where $q$ is the minimum of the real parts of the eigenvalues of $\qQ$.
Then for any $t\gqq 0$ and $x,y \in \RR^d$ 
\[
|X^{\e}_t(x)-X^{\e}_t(y)|^{p'}\lqq |e^{-\qQ t}|^{p'}|x-y|^{p'}\lqq C^{p'}_0 e^{-q_* p' t}|x-y|^{p'}.
\]
Hence 
\[
\Wp(X^{\e}_t(x), X^{\e}_t(y)) \lqq \Big(C_0 e^{-q_* t}|x-y|\Big)^{\min\{1, p'\}}.
\]
By disintegration of the invariant distribution $\mu^\e$ we have
\begin{align}
\Wp(X^\e_t(x),\mu^\e)&=\Wp(X^\e_t(x),X^\e_t(\mu^\e)) \lqq \int_{\RR^d}
\Wp(X^\e_t(x),X^\e_t(y))\mu^{\e}(\ud y)\nonumber\\
&\lqq \Big(C_0 e^{-q_* t}\Big)^{\min\{1, p'\}}
\int_{\RR^d}|x-y|^{\min\{1, p'\}}\mu^{\e}(\ud y)\nonumber\\
&\lqq 
\Big(C_0 e^{-q_* t}|x|\Big)^{\min\{1, p'\}}+(C_0 e^{-q_* t})^{\min\{1, p'\}}
\int_{\RR^d}
|y|^{\min\{1, p'\}}\mu^{\e}(\ud y).\label{eq:disintegration}
\end{align}
Since
$X^\e_\infty=\e \oO_\infty$, it follows
\[
\int_{\RR^d}|y|^{\min\{1, p'\}}\mu^{\e}(\ud y)=\e^{\min\{1, p'\}} \EE[|\oO_\infty|^{\min\{1, p'\}}] < \infty.
\]
As a consequence,
$\lim\limits_{t\to \infty}\sup\limits_{|x|\lqq R} \Wp(X^\e_t(x),\mu^\e)=0$ for any $R>0$ and $\e>0$. 
\end{proof}

Observe that
\[
X^\e_t(x) = e^{-\qQ t} x + \e \oO_t,\quad
\textrm{ where } \oO_t:=\int_0^t e^{- \qQ (t-s)}  \ud L_s. 
\]
In particular,
\begin{equation}\label{eq: OUthermalizationW}
\Wp(\oO_t,\oO_\infty)\lqq \Big(C_0e^{-q_* t}\Big)^{\min\{1, p'\}}\EE[|\oO_\infty|^{\min\{1, p'\}}].
\end{equation}
By the exponential stability hypothesis we have 
$e^{-\qQ t} x\to 0$ as $t\to \infty$.
Therefore, Slutsky's theorem yields 
$X^\e_t(x)\stackrel{d}{\lra}\e \oO_\infty$ as $t\to \infty$, where $\oO_\infty$ has distribution $\mu^1$.

\begin{rem}\label{rem:invariantmoment}
{ It is not difficult to see that Hypothesis \ref{hyp:moments} yields for $1\lqq p' \lqq p$ 
\begin{align}\label{eq:invariantmoment}
\EE\big[|\oO_\infty|^{\min\{1,p'\}}\big]&\lqq  |b|\frac{C_0}{q_*}+
| \Sigma^{1/2}|\frac{C_0}{\sqrt{2q_*}}+
\frac{C_0}{q_*}\sqrt{\int_{|z|\lqq 1}|z|^2 \nu(\ud z)}+
\exp\left(\frac{C_0}{q_*}\int_{|z|>C^{-1}_0}|z|\nu (\ud z)\right),
\end{align} 
where $q_*>0$ is given at the beginning of the proof of the preceding Lemma \ref{lem:erg5}.} The proof of the jump part and drift is elementary and given in \cite{Wang-12} p.1000-1001. The Brownian component can be easily estimated by the It\^o isometry, see for instance \cite{KS98} Section 5.6.
\end{rem}

\section{\textbf{The main results}}\label{s:mainresults}
\subsection{\textbf{The derivation of cutoff thermalization}}\label{ss:derivation}
\subsubsection{\textbf{The key estimates for $p\gqq 1$}}\label{sss:keyestimate1}
Recall that $\mu^\e$ has the  distribution of $\e\oO_\infty$.
For transparency we start with $1 \lqq p' \lqq p$. 
On the one hand, 
by Lemma \ref{lem:invariance} properties a), b), c) and d) we have 
\begin{align}\label{eq:keyestp1u}
\Wp(X_t^\e(x), \mu^\e) 
&= \Wp(e^{-\qQ t}x+ \e \oO_t, \e \oO_\infty) \nonumber \\
&\lqq \Wp(e^{-\qQ t}x + \e \oO_t, e^{-\qQ t}x +\e \oO_\infty) + \Wp(e^{-\qQ t}x + \e \oO_\infty, \e \oO_\infty) \nonumber\\
&= \Wp(\e \oO_t, \e \oO_\infty) + |e^{-\qQ t} x| \nonumber\\
&= \e\Wp(\oO_t, \oO_\infty) + |e^{-\qQ t} x|.
\end{align}
On the other hand, since $p'\gqq 1$, property d) in Lemma \ref{lem:invariance}
with the help of properties a), b) and c)
 yields
\begin{align}\label{eq:keyestp1l}
|e^{-\qQ t} x|&= \Wp(e^{-\qQ t} x + \e \oO_\infty, \e \oO_\infty) \nonumber\\
&\lqq \Wp(e^{-\qQ t} x + \e \oO_\infty, e^{-\qQ t} x+\e \oO_t)+\Wp(e^{-\qQ t} x + \e \oO_t,
\e \oO_\infty)\nonumber\\
& =\e\Wp(\oO_\infty, \oO_t)+\Wp(X^\e_t(x), \mu^\e).
\end{align}
Combining the preceding inequalities we obtain
\begin{align}\label{eq:coupling}
\frac{|e^{-\qQ t}x|}{\e} - \Wp(\oO_t, \oO_\infty)\lqq 
\frac{\Wp(X_t^\e(x), \mu^\e)}{\e} \lqq \frac{|e^{-\qQ t}x|}{\e} + \Wp(\oO_t, \oO_\infty).
\end{align}
Since the $\Wp(\oO_t, \oO_\infty)\to 0$ as $t\to \infty$, for any $t_\e\to \infty$ as $\e\to 0$ we have $\Wp(\oO_{t_\e}, \oO_\infty)\to 0$ as $\e\to 0$.
It remains to show abrupt convergence of
 $|\frac{e^{-\qQ {\ft^x_\e}} x}{\e}|$ for the correct choice of $\ft^x_\e$. Therefore, the refined analysis of  the  linear system $e^{-\qQ t}x$ carried out in Lemma~\ref{jara} is necessary.

 \begin{rem}
 Note that the preceding formula (\ref{eq:coupling}) is valid for any $p$ of Hypothesis \ref{hyp:moments}. 
If $L$ has finite moments of all orders, that is, formally $p = \infty$, 
we may pass to the limit in (\ref{eq:coupling}) and obtain 
\begin{align}\label{eq:coupling infinity}
\frac{|e^{-\qQ t}x|}{\e} - \wW_\infty(\oO_t, \oO_\infty)\lqq 
\frac{\wW_\infty(X_t^\e(x), \mu^\e)}{\e} \lqq \frac{|e^{-\qQ t}x|}{\e} + \wW_\infty(\oO_t, \oO_\infty).
\end{align}
 This is satisfied for instance in the case of pure Brownian motion or uniformly bounded jumps. Moreover 
 \begin{align*}
 \wW_\infty(\oO_t, \oO_\infty) &= \lim_{p \ra \infty} \wW_p(\oO_t, \oO_\infty) 
 \lqq \lim_{p \ra \infty} \int_{\RR^d} \wW_p(\oO_t, \oO_t(z)) \PP(\oO_\infty \in \ud z) \\
 &\lqq |e^{-\qQ t}|\cdot \EE[ |\oO_\infty|] \ra 0, \quad\mbox{ as }t\ra \infty.
 \end{align*}
 \end{rem}

\subsubsection{\textbf{The key estimates for $p\in (0,1)$}}\label{sss:keyestimate2}
We point out that for $0<p'\lqq p$
the distance $\Wp$ satisfies all properties of Lemma \ref{lem:invariance}, however, with modified versions of c) and d).
Therefore, the upper bound (\ref{eq:keyestp1u}) has the shape
\begin{align*}
\Wp(X_t^\e(x), \mu^\e) 
&\lqq \Wp(e^{-\qQ t}x + \e \oO_t, e^{-\qQ t}x +\e \oO_\infty) + \Wp(e^{-\qQ t}x + \e \oO_\infty, \e \oO_\infty) \\
&= \e^{p'}\Wp(\oO_t, \oO_\infty) + \e^{p'}\Wp\big(\frac{e^{-\qQ t}x}{\e} + \oO_\infty, \oO_\infty\big)
\end{align*}
and the lower bound (\ref{eq:keyestp1l}) reads
\begin{align*}
\e^{p'}\Wp\big(\frac{e^{-\qQ t}x}{\e} + \oO_\infty, \oO_\infty\big)&=
\Wp(e^{-\qQ t} x + \e \oO_\infty, \e \oO_\infty) \\
&\lqq \Wp(e^{-\qQ t} x + \e \oO_\infty, e^{-\qQ t} x+\e \oO_t)+\Wp(e^{-\qQ t} x + \e \oO_t,
\e \oO_\infty)\\
& =\e^{p'}\Wp(\oO_\infty, \oO_t)+\Wp(X^\e_t(x), \mu^\e).
\end{align*}
The combination of the preceding inequalities yields
\begin{align}\label{eq:couplingp}
\Wp\big(\frac{e^{-\qQ t} x}{\e} +  \oO_\infty,  \oO_\infty\big) - \Wp(\oO_t, \oO_\infty)&\lqq 
\frac{\Wp(X_t^\e(x), \mu^\e)}{\e^{p'}} \nonumber\\
&
\lqq \Wp\big(\frac{e^{-\qQ t} x}{\e} +  \oO_\infty,  \oO_\infty\big) + \Wp(\oO_t, \oO_\infty).
\end{align}
\begin{rem} For $p'\in (0,1)$, property d) in Lemma \ref{lem:invariance} yields
\begin{equation}
\frac{|e^{-\qQ t} x|^{p'}}{\e^{p'}}-2\EE[|\oO_\infty|^{p'}]\lqq 
\Wp\big(\frac{e^{-\qQ t} x}{\e} +  \oO_\infty,  \oO_\infty\big)\lqq \frac{|e^{-\qQ t} x|^{p'}}{\e^{p'}}. 
\end{equation}
\end{rem}

\bigskip 

\subsection{\textbf{The first main result: Characterizations of profile cutoff thermalization}}\label{ss:firstmainresult}\hfill\\
This subsection presents the first cutoff thermalization results 
of in the sense of (\ref{e: introprofile}) for the system (\ref{eq: linear SDE}) with $x\neq 0$. 

\begin{rem}
Note that for initial value $x=0$ there is no cutoff thermalization. 
Indeed, by property c) in Lemma \ref{lem:invariance} we have
\[
\Wp(X^\e_t(0),\mu^\e)=\Wp(\e \oO_t,\e \oO_\infty)=\e^{\min\{1,p'\}}\Wp( \oO_t, \oO_\infty).
\]
Hence for any $t_\e\to \infty$ as 
$\e\to 0$ we have
\[
\lim\limits_{\e\to 0}
\frac{\Wp(X^\e_{t_\e}(0),\mu^\e)}{\e^{\min\{1,p'\}}}=\lim\limits_{\e\to 0}\Wp( \oO_{t_\e}, \oO_\infty)=0
\]
excluding a cutoff time scale separation.
\end{rem}

\subsubsection{\textbf{Explicit cutoff thermalization profile in case of first moments $p\gqq 1$}}
\label{sss:explicitcutoff}
The first main result characterizes the convergence of 
$
{
\Wp(X^\e_t(x),\mu^\e)}/{\e}
$ to a profile function
for $x\neq 0$ and $1\lqq p'\lqq p$.

\begin{thm}[\textbf{Cutoff thermalization profile}]\label{th:profile}
Let $\nu$ satisfy Hypothesis \ref{hyp:moments} for some $1 \lqq p \lqq \infty$. 
Let $\qQ$ satisfy Hypothesis \ref{hyp:statibility} and $x\in \RR^d$, $x\neq 0$, with the spectral representation
$\fq>0$,
$\ell , m \in  \{1,\ldots, d\}$, $\theta_1,\dots,\theta_{m} \in [0,2\pi)$ and $v_1,\dots,v_{m} \in  \mathbb{C}^d$ 
of Lemma \ref{jara}.

Then the following statements are equivalent.
\begin{enumerate}
\item[i)] The $\omega$-limit set
\begin{equation}\label{eq:omegalimit}
\omega(x):=\left\{\textrm{accumulation points of $\sum_{k=1}^{m} e^{i  t\theta_k} v_k$ as 
$t\to \infty$ }\right\}
\end{equation}
is contained in a sphere, that is,
the function 
\begin{equation}
\label{eq:jaka}
\omega(x)\ni u\mapsto
|u|\quad \textrm{ is constant}.
\end{equation}
\item[ii)] 
For the time scale 
\begin{equation}\label{eq:thermalization profile}
\ft^x_\e=\frac{1}{\fq}|\ln(\e)|+\frac{\ell-1}{\fq}\ln(|\ln(\e)|)
\end{equation}
the system $(X^{\e}_t(x))_{t\gqq 0}$ exhibits
for all  asymptotically constant window sizes $w_\e\to w>0$ 
 the abrupt thermalization profile
 for any $1\lqq p'\lqq p$ in the following sense
\[
\lim_{\e\to 0}
\frac{\Wp(X^\e_{\ft^x_\e+r\cdot w_\e}(x),\mu^\e)}{\e}=
\pP_x(r) \quad \textrm{ for any }
r\in \RR,
\]
where 
\begin{equation}\label{eq:profileshape}
\pP_x(r):=\frac{e^{-r  \fq w}}{\fq^{\ell-1}}|v| \qquad \mbox{ for any representative }v\in \om(x).
\end{equation} 
\end{enumerate}
Under either of the conditions, for $\e$ sufficiently small, we have the error estimate
\begin{equation}\label{eq:rate}
\left|
\frac{\Wp(X^\e_{\ft^x_\e+r\cdot w_\e}(x),\mu^\e)}{\e}-\pP_x(r) \right|\lqq \Wp(\oO_{\ft^x_\e},\oO_\infty)+\Big| \frac{|e^{-\qQ (\ft^x_\e + r \cdot w_\e)}x|}{\e}-\pP_x(r)\Big|
\end{equation}
which for generic $x$ yields a constant $C_x$ such that 
\begin{equation}\label{eq:rateprecise}
\left|
\frac{\Wp(X^\e_{\ft^x_\e+r\cdot w_\e}(x),\mu^\e)}{\e}-\pP_x(r) \right|\lqq C_x~\e^{1 \wedge \frac{\mathfrak{g}}{\fq}}.
\end{equation}
\end{thm}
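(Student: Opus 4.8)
The plan is to reduce the whole statement, by means of the two-sided coupling bound \eqref{eq:coupling} (and \eqref{eq:coupling infinity} when $p=\infty$), to a purely deterministic analysis of $t\mapsto|e^{-\qQ t}x|/\e$ along the scale $t=\ft^x_\e+r\cdot w_\e$, and then to feed in the fine asymptotics of Lemma \ref{jara}. First I would note that $\ft^x_\e\to\infty$ as $\e\to0$, so \eqref{eq: OUthermalizationW} gives $\Wp(\oO_{\ft^x_\e},\oO_\infty)\to0$; writing $t_\e:=\ft^x_\e+r\cdot w_\e$, inequality \eqref{eq:coupling} immediately yields
\[
\Big|\frac{1}{\e}\Wp(X^\e_{t_\e}(x),\mu^\e)-\frac{1}{\e}|e^{-\qQ t_\e}x|\Big|\lqq\Wp(\oO_{\ft^x_\e},\oO_\infty)\xrightarrow[\e\to0]{}0,
\]
which is already the error bound \eqref{eq:rate} after one triangle inequality against $\pP_x(r)$. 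In particular the profile limit in ii) exists (and equals $\pP_x(r)$) if and only if $\lim_{\e\to0}|e^{-\qQ t_\e}x|/\e$ exists (and equals $\pP_x(r)$), so everything hinges on the deterministic curve.

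Next I would insert Lemma \ref{jara}. From \eqref{eq:thermalization profile} one has $e^{\fq\ft^x_\e}=\e^{-1}|\ln\e|^{\ell-1}$ for $\e\in(0,1)$, whence $t_\e^{\ell-1}e^{-\fq t_\e}/\e\to\fq^{-(\ell-1)}e^{-\fq r w}$; combining this with $\frac{e^{\fq t}}{t^{\ell-1}}|e^{-\qQ t}x|=\big|\sum_{k=1}^m e^{it\theta_k}v_k\big|+o(1)$ from \eqref{eq:asmatexp} and the boundedness \eqref{eq:belowabove} gives
\[
\frac{|e^{-\qQ t_\e}x|}{\e}=\frac{e^{-\fq r w}}{\fq^{\ell-1}}\Big|\sum_{k=1}^m e^{it_\e\theta_k}v_k\Big|+o(1),\qquad\e\to0.
\]
Thus the theorem is equivalent to the following soft statement about the almost periodic curve $\gamma(t):=\sum_{k=1}^m e^{it\theta_k}v_k$: the number $\lim_{\e\to0}|\gamma(t_\e)|$ exists for every $r\in\RR$ and every $w_\e\to w>0$ if and only if $\omega(x)$, which is precisely the set of accumulation points of $\gamma$, lies on a sphere, in which case the limit is the common radius $|v|$, $v\in\omega(x)$, producing $\pP_x(r)$ exactly as in \eqref{eq:profileshape}.

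For the equivalence itself: if \eqref{eq:jaka} holds, the bounded real function $t\mapsto|\gamma(t)|$ has $|v|$ as its unique accumulation point at infinity and hence converges to it, and evaluating along $t_\e\to\infty$ gives i)$\Rightarrow$ii). For the converse I would argue by contraposition: if $\omega(x)$ is not spherical, choose $v_a,v_b\in\omega(x)$ with $|v_a|\neq|v_b|$ and sequences $s_n,s_n'\to\infty$ with $\gamma(s_n)\to v_a$ and $\gamma(s_n')\to v_b$; taking the constant window $w_\e\equiv w$, the map $\e\mapsto\ft^x_\e+r w$ is continuous and, for $\e$ small, a strictly monotone homeomorphism of $(0,\e_0)$ onto a half-line (because $u\mapsto\fq^{-1}u+\fq^{-1}(\ell-1)\ln u$ is increasing for $u>0$), so there are $\e_n\to0$, $\e_n'\to0$ with $\ft^x_{\e_n}+r w=s_n$ and $\ft^x_{\e_n'}+r w=s_n'$; by the first two paragraphs the renormalized distance then tends to $\fq^{-(\ell-1)}e^{-\fq r w}|v_a|$ along $(\e_n)$ and to $\fq^{-(\ell-1)}e^{-\fq r w}|v_b|$ along $(\e_n')$, two different values, so the limit in ii) cannot exist.

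Finally, for the quantitative bound \eqref{eq:rateprecise} I would estimate the two terms on the right of \eqref{eq:rate} at $t=\ft^x_\e+r w_\e$ for generic $x$ (so that $\fq$ is the spectral gap and $\omega(x)$ is a single point): the noise term is controlled via $\Wp(\oO_{\ft^x_\e},\oO_\infty)\lqq|e^{-\qQ\ft^x_\e}|\cdot\EE[|\oO_\infty|]$, which by the sharp Jordan asymptotics of $|e^{-\qQ t}|$ is $O(\e\,|\ln\e|^{c})$, while the deterministic term is handled by pushing Lemma \ref{jara} one order further, yielding for generic $x$ the estimate $\big||e^{-\qQ t}x|/\e-\pP_x(r)\big|=O(\e^{\mathfrak g/\fq})$ up to a logarithm, where $\mathfrak g$ is the gap between $\fq$ and the next real part of an eigenvalue of $\qQ$; retaining the slower of the two rates and absorbing the logarithms into the constant gives $C_x\,\e^{1\wedge\mathfrak g/\fq}$. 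The hard part is precisely this last step --- obtaining the sharp exponent $1\wedge\mathfrak g/\fq$ requires a quantitative upgrade of Lemma \ref{jara} that keeps track of all Jordan block sizes and of all eigenvalues with real part in $[\fq,\fq+\mathfrak g)$; by contrast the equivalence i)$\Leftrightarrow$ii) is soft, resting only on the almost periodicity of $\gamma$ and the near-surjectivity of $\e\mapsto\ft^x_\e$ onto large times.
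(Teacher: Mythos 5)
Your proposal is correct and follows essentially the same route as the paper: the coupling inequality \eqref{eq:coupling} together with shift linearity reduces everything to the deterministic quantity $|e^{-\qQ t_\e}x|/\e$, Lemma \ref{jara} supplies the asymptotics along the scale \eqref{eq:thermalization profile}, and the equivalence i)$\Leftrightarrow$ii) rests on the boundedness of the almost periodic curve plus the surjectivity of $\e\mapsto\ft^x_\e+rw_\e$ onto a half-line (the paper phrases the converse directly via Bolzano--Weierstrass inside Theorem \ref{th:profileabstract} rather than by contraposition, but the content is identical). The only presentational difference is that the paper first proves the abstract version (Theorem \ref{th:profileabstract}, valid for all $p>0$, via Proposition \ref{prop:replacement}) and then specializes with property d) of Lemma \ref{lem:invariance}, and, like you, it treats the sharp exponent in \eqref{eq:rateprecise} only at the level of the generic spectral discussion in Remark \ref{rem:explanationformula}.
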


\noindent {The proof of Theorem \ref{th:profile} is given at the end of Appendix \ref{ap:B}.} In the sequel, we essentially characterize when the function
\[
\omega(x)\ni u\mapsto |u|
\]
is constant. We enumerate $v_1, \dots, v_m$ as follows. Without loss of generality we assume that $\theta_1=0$, that is, $v_1 \in \RR^d$. Otherwise we take $v_1=0$ and eliminate it from the sum $\sum_{k=1}^{m}e^{i\theta_k t}v_k$. Without loss of generality we assume that $m=2n+1$ for some $n\in \NN$.
We assume that 
$v_k$ and $v_{k+1}=\bar v_k$ are complex conjugate for all even number $k\in \{2,\ldots,m\}$.
For $k\in \{2,\ldots,m\}$ we write $
v_k=\hat v_k+i\check v_k$ where 
$\hat v_k,\check v_k\in \RR^d$. 
{Since 
\[
e^{i\theta_k t}v_k=[\cos(\theta_k t)\hat v_k-\sin(\theta_k t)\check v_k]+i[\sin(\theta_k t)\hat v_k+\cos(\theta_k t)\check v_k],
\]
the decomplexification given in Lemma \ref{lem:base} yields the representation 
\begin{equation}\label{eq:wthetarepre1}
\sum_{k=1}^{m}e^{i\theta_k t}v_k={v_1}+2\sum_{k=1}^{n}
\Big(\cos(\theta_{2k} t)\hat v_{2k}-\sin(\theta_{2k} t)\check v_{2k}\Big),
\end{equation}
where 
${v_1}\in \RR^d$.}
\begin{rem}\label{rem:rationallyindependent1}
Note that the angles $\theta_{2},\ldots,\theta_{2n}$ in (\ref{eq:wthetarepre1}) 
coming from Lemma \ref{jara} are rationally $2\pi$-independent for generic matrices $\qQ$ and  initial values $x$. In other words, they satisfy the non-resonance condition
\begin{equation}\label{eq:rationallyindependent1}
h_1\theta_{2}+\cdots+h_n \theta_{2n}\not\in  2\pi \cdot \mathbb{Z}
\end{equation}
for all $(h_1,\ldots,h_n)\in \mathbb{Z}^n\setminus \{0\}$.
\end{rem}

\begin{thm}\label{th:normalgrowth}
Let the assumptions of Theorem \ref{th:profile} be satisfied.
In addition, we assume that the angles 
$\theta_2,\ldots,\theta_{2n}$ are rationally $2\pi$-independent according to  (\ref{eq:rationallyindependent1}) in Remark \ref{rem:rationallyindependent1}. 
\hfill\\
Then
i) and ii) in Theorem \ref{th:profile}
are equivalent to the
following normal growth condition: 
the family of  $\RR^d$-valued vectors  
\begin{equation}\label{e:non-normal growth}
(\cblue{v_1},\hat v_2,\check v_2,\ldots,\hat v_{2n},\check v_{2n}) ~\textrm{ is orthogonal and satisfies }\quad
 |\hat v_{2k}|=|\check v_{2k}|\quad \textrm{ for } \quad k=1,\ldots,n.
\end{equation}
In this case the profile function has the following shape 
\begin{equation}\label{eq:simplifiedprofile}
\pP_x(r)=\frac{e^{-r  \fq w}}{\fq^{\ell-1}}|\sum_{j=1}^{m}v_k|, \qquad r\in \RR. 
\end{equation}
\end{thm}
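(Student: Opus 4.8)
The plan is to combine the refined spectral description \eqref{eq:wthetarepre1} with a classical equidistribution argument on a torus and then to reduce the geometric condition \eqref{eq:jaka} to the vanishing of the non‑constant Fourier coefficients of a trigonometric polynomial. Set $f(t):=\sum_{k=1}^m e^{i\theta_k t}v_k = v_1+2\sum_{k=1}^n\big(\cos(\theta_{2k}t)\,\hat v_{2k}-\sin(\theta_{2k}t)\,\check v_{2k}\big)\in\RR^d$ as in \eqref{eq:wthetarepre1}, and introduce the trigonometric map
\[
\Phi\colon\TT^n\to\RR^d,\qquad \Phi(\phi):=v_1+2\sum_{k=1}^n\big(\cos\phi_k\,\hat v_{2k}-\sin\phi_k\,\check v_{2k}\big),
\]
so that $f(t)=\Phi(\theta_2 t,\dots,\theta_{2n}t)$. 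The first step is to invoke Kronecker's theorem (equivalently Weyl equidistribution): since $\theta_2,\dots,\theta_{2n}$ are rationally $2\pi$‑independent in the sense of \eqref{eq:rationallyindependent1}, the tail orbit $\{(\theta_2 t,\dots,\theta_{2n}t)\bmod 2\pi : t\gqq T\}$ is dense in $\TT^n$ for every $T$; by continuity (indeed, trigonometric polynomiality) of $\Phi$ and compactness of $\TT^n$ this yields $\om(x)=\Phi(\TT^n)$, a compact connected set.

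Next I would analyze when the map $u\mapsto|u|$ is constant on $\Phi(\TT^n)$, equivalently when $\phi\mapsto|\Phi(\phi)|^2$ is constant on $\TT^n$. Expanding the square and rewriting $\cos^2\phi_k,\sin^2\phi_k,\cos\phi_k\sin\phi_k$ through $\cos(2\phi_k),\sin(2\phi_k)$, one gets $|\Phi(\phi)|^2$ as a trigonometric polynomial supported on the frequencies $0$, $\pm e_k$, $\pm 2e_k$ and $\pm e_j\pm e_k$ ($j\neq k$) of $\ZZ^n$, with coefficients that are explicit inner products: the $\pm e_k$‑coefficients involve $\ip{v_1}{\hat v_{2k}}$ and $\ip{v_1}{\check v_{2k}}$; the $\pm 2e_k$‑coefficients involve $|\hat v_{2k}|^2-|\check v_{2k}|^2$ and $\ip{\hat v_{2k}}{\check v_{2k}}$; and the $\pm e_j\pm e_k$‑coefficients involve $\ip{\hat v_{2j}}{\hat v_{2k}}$, $\ip{\hat v_{2j}}{\check v_{2k}}$, $\ip{\check v_{2j}}{\check v_{2k}}$. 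These frequencies are pairwise distinct, so the corresponding characters are linearly independent in $L^2(\TT^n)$; hence $|\Phi|^2$ is constant iff every non‑constant coefficient vanishes. Reading off the list, this is exactly the normal growth condition \eqref{e:non-normal growth}: mutual orthogonality of $(v_1,\hat v_2,\check v_2,\dots,\hat v_{2n},\check v_{2n})$ together with $|\hat v_{2k}|=|\check v_{2k}|$ for all $k$. Combined with the equivalence of i) and ii) already furnished by Theorem \ref{th:profile}, this establishes the claimed equivalence.

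Finally, assuming \eqref{e:non-normal growth}, only the constant Fourier coefficient survives, so $|\Phi(\phi)|^2=|v_1|^2+2\sum_{k=1}^n(|\hat v_{2k}|^2+|\check v_{2k}|^2)=|v_1|^2+4\sum_{k=1}^n|\hat v_{2k}|^2$ for all $\phi$; since $\sum_{k=1}^m v_k=v_1+2\sum_{k=1}^n\hat v_{2k}$ and $v_1,\hat v_2,\dots,\hat v_{2n}$ are orthogonal, this equals $\big|\sum_{k=1}^m v_k\big|^2$, and substituting $|v|=\big|\sum_{k=1}^m v_k\big|$ into \eqref{eq:profileshape} gives \eqref{eq:simplifiedprofile}. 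The genuinely delicate point is the equidistribution step: one must check that the hypothesis \eqref{eq:rationallyindependent1} is precisely what forces the orbit closure to be all of $\TT^n$ (a proper subtorus would only yield constancy of $|\Phi|$ there, a strictly weaker conclusion) and that mere density — rather than equidistribution — suffices for identifying $\om(x)$; the rest is bookkeeping with characters, modulo the trivial edge cases $n=0$ and $v_1=0$ (where the relations involving $v_1$ become vacuous).
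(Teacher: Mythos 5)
Your proposal is correct, and its skeleton coincides with the paper's: the decomplexified representation (\ref{eq:wthetarepre1}), the density of the orbit $t\mapsto(\theta_2t,\dots,\theta_{2n}t)$ on the torus under the non-resonance condition (via Corollary 4.2.3 in \cite{VIANA}), and the Pythagorean expansion of $\bigl|\sum_{k}e^{i\theta_kt}v_k\bigr|^2$ are exactly the ingredients of Appendix \ref{Ap:normal}. Where you genuinely diverge is in how the necessity direction is executed. The paper (Lemma \ref{lem:vuelta}) works with the restriction of the expanded quadratic form to the set $\{x_k^2+y_k^2=1\}$ and extracts the orthogonality relations and the norm equalities one at a time through four rounds of evaluations at special sign patterns ($x_k=\pm1$, $x_1=\tfrac{\sqrt2}{2}$, a free parameter $\zeta$, etc.). You instead view $|\Phi(\phi)|^2$ as a trigonometric polynomial on $\TT^n$ supported on the frequencies $0$, $\pm e_k$, $\pm 2e_k$, $\pm e_j\pm e_k$, and invoke linear independence of distinct characters to conclude that constancy forces every non-constant coefficient to vanish; unwinding those coefficients gives precisely $\<v_1,\hat v_{2k}\>=\<v_1,\check v_{2k}\>=0$, $|\hat v_{2k}|^2=|\check v_{2k}|^2$, $\<\hat v_{2k},\check v_{2k}\>=0$, and (from the four coefficients at $\pm(e_j+e_k)$, $\pm(e_j-e_k)$) the vanishing of all four cross inner products $\<\hat v_{2j},\hat v_{2k}\>$, $\<\check v_{2j},\check v_{2k}\>$, $\<\hat v_{2j},\check v_{2k}\>$, $\<\check v_{2j},\hat v_{2k}\>$ for $j\neq k$. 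This is a cleaner and more systematic way to carry out the paper's Steps 1--4, and it delivers both implications in one stroke (the sufficiency being the trivial direction of ``constant iff only the zero frequency survives''), at the price of checking that the listed frequencies in $\ZZ^n$ are pairwise distinct — which they are. Your identification $\om(x)=\Phi(\TT^n)$ from density of the tail orbit, and the final computation $|v_1|^2+4\sum_k|\hat v_{2k}|^2=\bigl|\sum_{k=1}^m v_k\bigr|^2$ yielding (\ref{eq:simplifiedprofile}), match the paper's Lemma \ref{lem:ida} and Remark \ref{rem:explanationformula} item (1). The only caveats you correctly flag yourself: density (not equidistribution) is all that is needed, and the hypothesis (\ref{eq:rationallyindependent1}) does guarantee it for the continuous-time orbit.
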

\noindent The proof is given in Appendix \ref{Ap:normal}. 
It consists of a characterization of Theorem \ref{th:profile} item i), that is, the property of $\om(x)$ being contained in a sphere.  
This characterization is carried out in two consecutive steps in Appendix \ref{Ap:normal} under the non-resonance condition (\ref{eq:rationallyindependent1}) given in Remark~\ref{rem:rationallyindependent1}. 
Lemma \ref{lem:ida} yields the necessary implication, 
while Lemma~\ref{lem:vuelta} states the sufficiency. 

\begin{rem}\label{rem:explanationformula}
\hfill

\begin{enumerate}
\item It is clear that under item i) of Theorem \ref{th:profile} the profile can be defined as 
\[
\pP_x(r)=\frac{e^{-r  \fq w}}{\fq^{\ell-1}}|u|
\]
for any representative $u\in \omega(x)$. 
Under the assumption of non-resonance of Remark \ref{rem:rationallyindependent1}  
we have that $u=\sum_{j=1}^{m}v_k\in \omega(x)$.
Indeed, since the $\theta_2, \dots, \theta_{2n}$ are rationally independent 
$((e^{i \theta_2 t}, \dots, e^{i\theta_{2n} t}))_{t\gqq 0}$ is dense in the torus 
$S_1^n$, see Corollary 4.2.3 in \cite{VIANA}. Hence we approximate the point $(1, \dots, 1)$ for a subsequence $t_k\ra \infty$ 
as $k \ra\infty$ and hence $u=\sum_{j=1}^{m}v_k\in \omega(x)$ and (\ref{eq:simplifiedprofile}) 
is valid. 
\item 
The existence of a thermalization profile boils down to the precise geometric structure of the complicated limit set $\omega(x)$.
However, it is not difficult to cover several cases of interest. In particular, 
in the case $\qQ$ being symmetric $\omega(x)=\{\sum_{j=1}^{m}v_k\}$ since all the rotation angles vanish, the function (\ref{eq:jaka}) is trivially constant.
\item  The shape of the thermalization profile given in (\ref{eq:profileshape}) is suprisingly universal: \begin{align*}
\pP_x(r)=\frac{e^{-r  \fq w}}{\fq^{\ell-1}}|u|=\Wp\left(\frac{e^{-r  \fq w}}{\fq^{\ell-1}}u+\oO_\infty,\oO_\infty\right).
\end{align*}
It does not depend on the parameters $1\lqq p' \lqq p \in [1, \infty]$ (beyond finite moments of order $p$) nor on the statistical properties of the driving noise $\nu$ due to the shift linearity (\ref{eq:shitflinearity}), item d) of Lemma \ref{lem:invariance}. 
For $p=2$ item d) of Lemma \ref{lem:invariance} is well-known and a direct consequence of Pythagoras' theorem, see for instance \cite{Panaretos19}, Section 2, p.412. 
We give the proof for general $p\gqq 1$ in Appendix \ref{ap:A}. 
\item The statistical information of $L$ enters in the rate of convergence 
on the right-hand side of (\ref{eq:rate}).
Indeed, by (\ref{eq:disintegration}) we have generically
\begin{align}\label{eq:constanteslimite}
\Wp(\oO_{\ft^x_\e},\oO_\infty)\lqq 
\EE[|\oO_\infty|]\cdot |e^{-\qQ \ft^x_\e}|
\lqq C_0 \EE[|\oO_\infty|] \e,
\end{align}
where $C_0$ is given in (\ref{eq:initialgrowth}). Moreover, 
$\EE[|\oO_\infty|]$ is bounded explicitly in terms of the characteristic of the noise and the matrix $\qQ$, see (\ref{eq:invariantmoment}).
\item The  order of the asymptotic error  $|\pP_x(r) - \frac{|e^{-\qQ (\ft^x_\e + r \cdot w)}x|}{\e}|$ depends 
inherently on the spectral structure of $\qQ$.
In the worst case its rates of convergence are of logarithmic order $1/\ft^x_\e$, see formula (\ref{eq:tasamult}) in the example of Subsection \ref{ex:multiplicity}. 
In Subsection \ref{ex:complejo} we see the optimal rate of convergence where this error is zero. 
However, this is not the generic picture. 
Generically all eigenvalues $\la_1, \la_2, \dots, \la_d\in\CC$ have different real parts (up to pairs of complex conjugate eigenvalues) with multiplicity $1$.
Without loss of generality we label $\la_1, \la_2, \dots, \la_d\in\CC$ by ascending (positive) real parts. 
Moreover, $\fq= \mathsf{Re}(\la_1)$ in the generic case.
Under the assumption of a thermalization profile 
we count with the speed of convergence of order $e^{-\mathfrak{g} \ft^x_\e} = K(x) \e^{\nicefrac{\mathfrak{g}}{\fq}}$,  where 
\begin{equation}\label{eq:ordendeconvergencia}
\mathfrak{g} =\begin{cases} \mathsf{Re}(\la_2)-\fq, & \mbox{if }\la_2 \neq \bar \la_1, \\  \mathsf{Re}(\la_3)-\fq, & \mbox{if }\la_2 = \bar\la_1, \end{cases}
\end{equation}
and  
since any initial datum  $x$ has the unique representation 
$
x=\sum_{j=1}^{d}c_j(x)v_j
$
and hence $K(x)$ can be taken as 
\begin{equation}\label{eq:krepresentation}
K(x)=\max_{j=1,\ldots,d} |c_j(x)v_j|,
\end{equation}
where $v_j$ are the eigenvectors associated to the eigenvalue $\lambda_j$.
\item  By (\ref{eq:rate}), item (3) and item (5) we obtain the generic order of magnitude of $\e$ 
such that the asymptotic approximation holds
for concrete systems in terms of the noises characteristics, $\qQ$, the long-term dynamics of $|e^{-\qQ t}|$, and the initial value $x$.
\end{enumerate}
\end{rem}

\subsubsection{\textbf{Abstract cutoff thermalization profile in case of $p \in (0,1)$}}

This result is stated in order to cover perturbations of the Cauchy process, where $p<1$ 
and other stable processes such as the Holtsmark process $p<\frac{1}{2}$. 
Here the profile function does exist but remains abstract. 

\begin{thm}[\textbf{Abstract cutoff thermalization profile for any $p>0$}]\label{th:profileabstract} \hfill\\
Let the assumptions (and the notation) of Theorem \ref{th:profile}
be valid for some $0<p\lqq \infty$.
Then for any $0<p'\lqq p$ the following statements are equivalent.
\begin{enumerate}
\item[i)] For any $\lambda>0$, the function
$\omega(x)\ni u\mapsto
\Wp(\lambda u+\oO_\infty,\oO_\infty)$
is constant, where $\omega(x)$ is given in (\ref{eq:omegalimit}).
\item[ii)] 
For the time scale 
$\ft^x_\e$ given in (\ref{eq:thermalization profile})
the system $(X^{\e}_t(x))_{t\gqq 0}$ exhibits
for all  asymptotically constant window sizes $w_\e\to w>0$ 
 the abrupt thermalization profile
 for any $0< p'\lqq p$ in the following sense
\[
\lim_{\e\to 0}
\frac{\Wp(X^\e_{\ft^x_\e+r\cdot w_\e}(x),\mu^\e)}{\e^{\min\{1,p'\}}}=
\pP_{x, p'}(r) \quad \textrm{ for any }
r\in \RR,
\]
where
\begin{equation}\label{eq:abstractperfil}
\pP_{x, p'}(r):=\Wp\Big(\frac{e^{-r  \fq w}}{\fq^{\ell-1}}v+
\oO_\infty,\oO_\infty\Big) \qquad \mbox{ for any representative }v\in \om(x).
\end{equation}
\end{enumerate}
\end{thm}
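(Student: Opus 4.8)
The plan is to reduce the statement, via the sandwich estimates \eqref{eq:coupling} (valid for $p'\gqq 1$) and \eqref{eq:couplingp} (valid for $p'\in(0,1)$), to the asymptotics of the deterministic quantity $\Wp\big(\tfrac{e^{-\qQ t}x}{\e}+\oO_\infty,\oO_\infty\big)$ evaluated along $t=\ft^x_\e+r\,w_\e$, and then to analyse the latter with Lemma \ref{jara}. Invoking shift linearity \eqref{eq:shitflinearity} for $p'\gqq1$ and reading off \eqref{eq:couplingp} for $p'\in(0,1)$, both cases combine into
\[
\Big|\tfrac{\Wp(X^\e_t(x),\mu^\e)}{\e^{\min\{1,p'\}}}-\Wp\big(\tfrac{e^{-\qQ t}x}{\e}+\oO_\infty,\oO_\infty\big)\Big|\lqq \Wp(\oO_t,\oO_\infty),
\]
and the right-hand side vanishes as $t\to\infty$ by \eqref{eq: OUthermalizationW}. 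Hence the theorem is equivalent to the corresponding assertion for $\Wp\big(\tfrac{e^{-\qQ (\ft^x_\e+r w_\e)}x}{\e}+\oO_\infty,\oO_\infty\big)$.

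Next I would substitute the time scale \eqref{eq:thermalization profile}. Since $e^{\fq\,\ft^x_\e}=\e^{-1}|\ln\e|^{\ell-1}$ and $\ft^x_\e+r w_\e=\fq^{-1}|\ln\e|(1+o(1))$, Lemma \ref{jara} yields, with $g(t):=\sum_{k=1}^{m}e^{it\theta_k}v_k$,
\[
\tfrac{e^{-\qQ (\ft^x_\e+r w_\e)}x}{\e}=\lambda_\e\, g\big(\ft^x_\e+r w_\e\big)+\rho_\e,\qquad \lambda_\e\longrightarrow \lambda:=\tfrac{e^{-\fq r w}}{\fq^{\ell-1}},\quad \rho_\e\longrightarrow 0,
\]
where $|g(t)|\lqq\sum_{k}|v_k|$ by \eqref{eq:belowabove}. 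The other ingredient is that $(\mu,u)\mapsto\Wp(\mu u+\oO_\infty,\oO_\infty)$ is continuous, indeed $\min\{1,p'\}$-H\"older: by the triangle inequality a), translation invariance b) and the domination estimate e) for the diagonal coupling of $\oO_\infty$ with itself one gets $|\Wp(\mu u+\oO_\infty,\oO_\infty)-\Wp(\mu' u'+\oO_\infty,\oO_\infty)|\lqq|\mu u-\mu' u'|^{\min\{1,p'\}}$. Consequently $\Wp\big(\tfrac{e^{-\qQ (\ft^x_\e+r w_\e)}x}{\e}+\oO_\infty,\oO_\infty\big)$ and $\Wp\big(\lambda\,g(\ft^x_\e+r w_\e)+\oO_\infty,\oO_\infty\big)$ have the same $\e\to0$ behaviour.

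For i) $\Rightarrow$ ii): since $\ft^x_\e+r w_\e\to\infty$ and $g$ is bounded, the nonempty set of subsequential limits of $g(\ft^x_\e+r w_\e)$ as $\e\to0$ is contained in $\omega(x)$; assumption i) makes $u\mapsto\Wp(\lambda u+\oO_\infty,\oO_\infty)$ constant on $\omega(x)$, so the limit exists and equals $\Wp(\lambda v+\oO_\infty,\oO_\infty)=\pP_{x,p'}(r)$ for any $v\in\omega(x)$, which together with the previous two paragraphs gives ii). For ii) $\Rightarrow$ i) I argue by contraposition: if i) fails, fix $\lambda_0>0$ and $u_1,u_2\in\omega(x)$ with $\Wp(\lambda_0 u_1+\oO_\infty,\oO_\infty)\neq\Wp(\lambda_0 u_2+\oO_\infty,\oO_\infty)$, choose $r_0\in\RR$ with $e^{-\fq r_0 w}/\fq^{\ell-1}=\lambda_0$, and test ii) with the constant window $w_\e\equiv w$. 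For $j=1,2$ pick $s^{(j)}_n\to\infty$ with $g(s^{(j)}_n)\to u_j$; since $\ft^x_\e$ is continuous, strictly decreasing in $\e$ for $\e$ small, and diverges to $+\infty$ as $\e\to0^+$, one solves $\ft^x_{\e^{(j)}_n}=s^{(j)}_n-r_0 w$ to obtain $\e^{(j)}_n\to0$ with $g(\ft^x_{\e^{(j)}_n}+r_0 w)\to u_j$, so the renormalized distance converges along the two sequences to the two distinct values $\Wp(\lambda_0 u_j+\oO_\infty,\oO_\infty)$; hence no $\e\to0$ limit exists, contradicting ii).

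The hard part is that $g(t)$ generically fails to converge as $t\to\infty$, so one cannot simply pass to the limit inside $\Wp$; the entire argument has to be phrased through the $\omega$-limit set $\omega(x)$ and its accumulation properties, and in the converse direction one must check that every point of $\omega(x)$ is genuinely attained by $g$ along an admissible sequence $\ft^x_{\e_n}+r_0 w$ with $\e_n\to0$, which is precisely where the explicit monotone shape of the time scale $\ft^x_\e$ in \eqref{eq:thermalization profile} is exploited. Finally, note that for $p'\gqq1$ shift linearity \eqref{eq:shitflinearity} collapses i) to the sphere condition \eqref{eq:jaka} and $\pP_{x,p'}(r)$ to $\pP_x(r)=\tfrac{e^{-r\fq w}}{\fq^{\ell-1}}|v|$, so this argument simultaneously recovers Theorem \ref{th:profile}.
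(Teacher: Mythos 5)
Your proposal is correct and follows essentially the same route as the paper: the sandwich estimates \eqref{eq:coupling}/\eqref{eq:couplingp} plus shift linearity reduce everything to the drift term, Lemma \ref{jara} together with the H\"older continuity of $u\mapsto\Wp(u+\oO_\infty,\oO_\infty)$ replaces $e^{-\qQ t}x/\e$ by $\lambda\,g(\ft^x_\e+rw_\e)$ (this is the paper's Proposition \ref{prop:replacement}), and both implications are then handled by subsequence extraction on the bounded function $g$ relative to $\omega(x)$ (the paper proves ii)$\Rightarrow$i) directly rather than by contraposition, but the content is identical). Your explicit inversion of the monotone time scale $\e\mapsto\ft^x_\e$ to realize an arbitrary point of $\omega(x)$ along admissible sequences $\e_n\to 0$ is a detail the paper merely asserts, so that step is a welcome clarification rather than a deviation.
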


\noindent {The proof is given in Appendix \ref{ap:B}. }

\begin{rem}
\hfill
\begin{enumerate}
\item For $1\lqq p'\lqq p$ Theorem \ref{th:profileabstract} with the help of property d) in Lemma \ref{lem:invariance} recovers an abstract  version of Theorem \ref{th:profile} which also extends to $p<1$.
\item The (asymptotic) error estimates of Theorem \ref{th:profile} are harder to obtain for $0<p'<1$.

\item
For (nondegenerate) pure Brownian motion, the existence of a cutoff thermalization profile in total variation distance is equivalent to  the set $\Sigma^{-1/2}\omega(x)$ being contained in a sphere, where $\Sigma$ is the covariance matrix of the invariant distribution, see Corollary~2.11 in \cite{BJ1}.
In Corollary~4.14 of \cite{BHP} 
it is shown that 
a corresponding geometric condition  is at least sufficient.
For further unexpected properties in the pure $\alpha$ stable case, see \cite{sokolov}.
\end{enumerate}
\end{rem}

\subsection{\textbf{The second main result: generic window cutoff thermalization}}\label{ss:secondmainresult}

Roughly speaking, condition (\ref{eq:jaka}) in item i)  of Theorem \ref{th:profile} (as well as item i) in Theorem \ref{th:profileabstract}) fails to hold 
if the rotational part of $\qQ$ is too strong.
However, for the general case we still have abrupt thermalization in the following  weaker sense. 

\begin{thm}\label{th:linearwhite}
Let the assumptions (and the notation) of Theorem \ref{th:profile}
be valid for some $0<p\lqq \infty$.
Then the system $(X^{\e}_t(x))_{t\gqq 0}$ exhibits
 window cutoff thermalization
 on the time scale 
\[
\ft^x_\e=\frac{1}{\fq}|\ln(\e)|+\frac{\ell-1}{\fq}\ln(|\ln(\e)|)
\]
and in the sense that for all asymptotically constant window sizes $w_\e\to w>0$ it follows  
\[
\lim_{r\to -\infty}\liminf_{\e\to 0}
\frac{\Wp(X^\e_{\ft^x_\e+r\cdot w_\e}(x),\mu^\e)}{\e^{\min\{1,p'\}}}=\infty\quad \mathrm{ and } \quad\lim_{r\to \infty}\limsup_{\e\to 0}
\frac{\Wp(X^\e_{\ft^x_\e+r\cdot w_\e}(x),\mu^\e)}{\e^{\min\{1,p'\}}}=0.
\]
for all $0< p'\lqq p$.
 \end{thm}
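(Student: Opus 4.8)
The plan is to derive window cutoff thermalization directly from the key estimates already established, namely inequality~(\ref{eq:coupling}) for $p'\gqq 1$ and inequality~(\ref{eq:couplingp}) for $p'\in(0,1)$, combined with the sharp asymptotics of $|e^{-\qQ t}x|$ coming from Lemma~\ref{jara}. The point is that the random part $\oO_{\ft^x_\e}$ thermalizes to $\oO_\infty$ fast (by (\ref{eq: OUthermalizationW}), $\Wp(\oO_t,\oO_\infty)\lqq (C_0 e^{-q_* t})^{\min\{1,p'\}}\EE[|\oO_\infty|^{\min\{1,p'\}}]\to 0$), so the asymptotics of the renormalized distance is governed entirely by the deterministic term $|e^{-\qQ t}x|/\e$ (resp.\ its $p'$-th power), and I only need a one-sided bound at $t=\ft^x_\e+r\cdot w_\e$ for large $|r|$, not a limit.

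First I would treat the case $p'\gqq 1$. By Lemma~\ref{jara}, with the spectral data $\fq,\ell,m,\theta_k,v_k$ attached to $x$, we have $|e^{-\qQ t}x|=\dfrac{t^{\ell-1}}{e^{\fq t}}\bigl(\bigl|\sum_{k=1}^m e^{it\theta_k}v_k\bigr|+o(1)\bigr)$ as $t\to\infty$. Plugging $t=\ft^x_\e+r\cdot w_\e$ with $\ft^x_\e=\fq^{-1}|\ln\e|+\frac{\ell-1}{\fq}\ln|\ln\e|$ gives $e^{-\fq t}=\e\,|\ln\e|^{-(\ell-1)}e^{-\fq r w_\e}$ and $t^{\ell-1}=|\ln\e|^{\ell-1}(\fq^{-1}+o(1))^{\ell-1}$, whence
\begin{equation*}
\frac{|e^{-\qQ(\ft^x_\e+r w_\e)}x|}{\e}=\frac{e^{-\fq r w_\e}}{\fq^{\ell-1}}\Bigl(\Bigl|\textstyle\sum_{k=1}^m e^{i(\ft^x_\e+r w_\e)\theta_k}v_k\Bigr|+o(1)\Bigr).
\end{equation*}
Using the lower bound in (\ref{eq:belowabove}), the bracket is bounded below by a fixed constant $c(x):=\liminf_{t\to\infty}\bigl|\sum_k e^{it\theta_k}v_k\bigr|>0$ for all small $\e$, and bounded above by $\sum_k|v_k|$. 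Therefore, from the lower estimate in (\ref{eq:coupling}), $\frac{\Wp(X^\e_{\ft^x_\e+r w_\e}(x),\mu^\e)}{\e}\gqq \frac{e^{-\fq r w_\e}}{\fq^{\ell-1}}\,\frac{c(x)}{2}-\Wp(\oO_{\ft^x_\e+r w_\e},\oO_\infty)$ for $\e$ small; taking $\e\to 0$ (so the last term vanishes and $w_\e\to w$) gives $\liminf_{\e\to0}\gqq \frac{c(x)}{2\fq^{\ell-1}}e^{-\fq r w}$, which $\to\infty$ as $r\to-\infty$. Symmetrically, from the upper estimate in (\ref{eq:coupling}), $\limsup_{\e\to0}\lqq \frac{\sum_k|v_k|}{\fq^{\ell-1}}e^{-\fq r w}$, which $\to 0$ as $r\to+\infty$. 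For $p'\in(0,1)$ I repeat the argument using (\ref{eq:couplingp}) together with the remark bounding $\Wp\bigl(\frac{e^{-\qQ t}x}{\e}+\oO_\infty,\oO_\infty\bigr)$ between $\frac{|e^{-\qQ t}x|^{p'}}{\e^{p'}}-2\EE[|\oO_\infty|^{p'}]$ and $\frac{|e^{-\qQ t}x|^{p'}}{\e^{p'}}$; the same substitution shows $\frac{|e^{-\qQ t}x|^{p'}}{\e^{p'}}$ behaves like $\frac{e^{-p'\fq r w}}{\fq^{p'(\ell-1)}}(c(x)^{p'}+o(1))$ downward for the $\liminf$ and like a bounded multiple of $e^{-p'\fq r w}$ upward for the $\limsup$; the additive constants $2\EE[|\oO_\infty|^{p'}]$ and the vanishing $\Wp(\oO_t,\oO_\infty)$ do not affect the limits $r\to\mp\infty$.

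The only genuinely delicate point is justifying that the $o(1)$ error in Lemma~\ref{jara} and the rotational factor $\bigl|\sum_k e^{it\theta_k}v_k\bigr|$ can be controlled uniformly enough: one must interchange ``$\e\to0$'' with the asymptotics along the moving time $t=\ft^x_\e+r w_\e\to\infty$, and the rotational norm does not converge but only oscillates between $c(x)$ and $\sum_k|v_k|$. This is handled cleanly because I only need the two-sided bound $c(x)\lqq \bigl|\sum_k e^{it\theta_k}v_k\bigr|\lqq \sum_k|v_k|$ eventually in $t$ (which is exactly (\ref{eq:belowabove})), rather than any limit; fixing $r$ first and then sending $\e\to0$ makes $t\to\infty$, so the eventual bounds apply, and the logarithmic correction $\frac{\ell-1}{\fq}\ln|\ln\e|$ in $\ft^x_\e$ is precisely what cancels the polynomial prefactor $t^{\ell-1}$ up to the constant $\fq^{-(\ell-1)}$. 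After establishing the four one-sided inequalities I conclude by noting they are uniform in $r$ on the relevant rays and letting $r\to\pm\infty$, which yields the claimed $\infty/0$ collapse for every $0<p'\lqq p$.
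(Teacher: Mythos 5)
Your proposal is correct and follows essentially the same route as the paper: the paper's proof in Appendix~\ref{ap:C} likewise reduces everything via the key estimates to the deterministic term, replaces $e^{-\qQ t}x/\e$ by the rescaled spectral sum (Proposition~\ref{prop:replacement}, which packages your substitution $t=\ft^x_\e+r w_\e$ and the limit (\ref{eq:newscaling})), applies the two-sided bounds from property d) of Lemma~\ref{lem:invariance}, and concludes with (\ref{eq:belowabove}). The only cosmetic difference is that you perform the substitution inline rather than factoring it through the shared Proposition~\ref{prop:replacement}.
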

\noindent {The proof is given Appendix \ref{ap:C}.  }
In contrast to other distances, 
the Wasserstein distance
also implies the cutoff thermalization for 
the physical observables as follows. 
\hfill 
\begin{cor}\label{cor:momentscutoffOUP}
Let the assumptions (and the notation) of Theorem \ref{th:profile}
be valid for some $p>0$.
Then we have for any $0< p'\lqq  p<\infty$ and $x\neq 0$ 
\begin{align*}
\lim_{r\to \infty}
\liminf_{\e\to 0}\frac{1}{\e^{p'}}\EE[|X^{\e}_{\ft_\e^x+r\cdot w_\e}(x)|^{p'}]=\lim_{r\to \infty}
\limsup_{\e\to 0}\frac{1}{\e^{p'}}\EE[|X^{\e}_{\ft_\e^x+r\cdot w_\e}(x)|^{p'}]=
\EE[|\oO_{\infty}|^{p'}]
\end{align*}
and \begin{align*}
\lim_{r\to -\infty}
\liminf_{\e\to 0}\frac{1}{\e^{p'}}\EE[|X^{\e}_{\ft_\e^x+r\cdot w_\e}(x)|^{p'}]=\lim_{r\to -\infty}
\limsup_{\e\to 0}\frac{1}{\e^{p'}}\EE[|X^{\e}_{\ft_\e^x+r\cdot w_\e}(x)|^{p'}]=\infty.
\end{align*}
\end{cor}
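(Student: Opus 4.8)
The plan is to deduce Corollary~\ref{cor:momentscutoffOUP} from Theorem~\ref{th:linearwhite} by exploiting the contraction property g) of Lemma~\ref{lem:invariance} together with the characterization f), which links convergence in $\wW_{p'}$ to joint convergence in distribution and in $p'$-th moments. The key observation is that the map $T:\RR^d\to\RR$, $T(u)=|u|$, is $1$-Lipschitz, so $\wW_{p'}(|U_1|,|U_2|)\lqq \wW_{p'}(U_1,U_2)$ for all $p'>0$. Applying this with $U_1=X^\e_{\ft^x_\e+r\cdot w_\e}(x)$ and $U_2=\mu^\e=\e\oO_\infty$, and recalling from Lemma~\ref{lem:invariance} c) that $\wW_{p'}(|U_1|,\e|\oO_\infty|)=\e^{\min\{1,p'\}}\wW_{p'}(|U_1|/\e,|\oO_\infty|)$ after rescaling, we reduce everything to estimating one-dimensional Wasserstein distances between $|X^\e_{\ft^x_\e+r\cdot w_\e}(x)|$ and $\e|\oO_\infty|$.

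First I would prove the \emph{upper limit} (the $r\to+\infty$ statement). Fix $p'\in(0,p]$. For $p'\gqq 1$, Jensen (concavity of $s\mapsto s^{p'}$ is false, so instead) I would use $\big|\EE[|U_1|^{p'}]^{1/p'}-\EE[|U_2|^{p'}]^{1/p'}\big|\lqq \wW_{p'}(U_1,U_2)$, which holds since $\|\cdot\|_{L^{p'}}$ is a norm and $\wW_{p'}$ dominates the $L^{p'}$-coupling distance; dividing by $\e$ and using Theorem~\ref{th:linearwhite} gives $\limsup_{\e\to0}\e^{-p'}\EE[|X^\e_{\ft^x_\e+r\cdot w_\e}(x)|^{p'}]\lqq \big(\wW_{p'}(\oO_\infty,0)^{?}+o_r(1)\big)$; more precisely, writing $a_\e:=\e^{-1}(\EE[|X^\e_\cdot(x)|^{p'}])^{1/p'}$ and $b:=(\EE[|\oO_\infty|^{p'}])^{1/p'}$, we get $|a_\e-b|\lqq \e^{-1}\wW_{p'}(X^\e_\cdot(x),\e\oO_\infty)$ for $p'\gqq1$, and an analogous bound with exponent $p'$ in place of the norm for $p'\in(0,1)$ using the subadditivity $|x+y|^{p'}\lqq |x|^{p'}+|y|^{p'}$. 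Taking $\e\to0$ and then $r\to+\infty$, Theorem~\ref{th:linearwhite} forces $a_\e\to b$, hence $\EE[|X^\e_\cdot(x)|^{p'}]/\e^{p'}\to \EE[|\oO_\infty|^{p'}]$, which is both the $\liminf$ and $\limsup$ claim.

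For the \emph{lower limit} (the $r\to-\infty$ statement) I would argue by a reverse triangle inequality at the level of norms: by the variation-of-constants formula (\ref{eq:voc}), $X^\e_t(x)=e^{-\qQ t}x+\e\oO_t$, so $|X^\e_t(x)|\gqq |e^{-\qQ t}x|-\e|\oO_t|$ and hence $\EE[|X^\e_t(x)|^{p'}]\gqq c_{p'}\big(|e^{-\qQ t}x|^{p'}-\e^{p'}\,?\big)$ after a standard $p'$-th moment inequality; dividing by $\e^{p'}$, the term $|e^{-\qQ(\ft^x_\e+r\cdot w_\e)}x|^{p'}/\e^{p'}$ is, by Lemma~\ref{jara} and the choice of $\ft^x_\e$, asymptotically $\fq^{-(\ell-1)p'}e^{-r\fq w p'}\,|\sum_k e^{i\theta_k t}v_k|^{p'}\cdot(1+o(1))$ which by (\ref{eq:belowabove}) is bounded below by a strictly positive multiple of $e^{-r\fq w p'}$, diverging as $r\to-\infty$ uniformly in small $\e$; the subtracted $\e$-moment term stays bounded by Remark~\ref{rem:invariantmoment}/(\ref{eq:invariantmoment}) and disappears after dividing and sending $\e\to0$. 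Combining, $\liminf_{\e\to0}\e^{-p'}\EE[|X^\e_{\ft^x_\e+r\cdot w_\e}(x)|^{p'}]\gqq c\,e^{-r\fq w p'}\to\infty$.

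The main obstacle I anticipate is making the lower bound on $\EE[|X^\e_t(x)|^{p'}]$ clean for $p'\in(0,1)$, where $u\mapsto|u|^{p'}$ is not convex and the reverse triangle inequality must be handled with the elementary bound $|a|^{p'}\gqq |a+b|^{p'}-|b|^{p'}$ (valid for $p'\in(0,1]$) rather than through $L^{p'}$-norm manipulations; one must also be careful that $\oO_t$, not $\oO_\infty$, appears, but $\EE[|\oO_t|^{p'}]$ is uniformly bounded in $t$ by the same estimate (\ref{eq: OUthermalizationW})/(\ref{eq:invariantmoment}) arguments, so this is a technicality rather than a genuine difficulty. A second, minor, point is that Theorem~\ref{th:linearwhite} is stated for $\wW_{p'}(X^\e_\cdot(x),\mu^\e)$ and one needs the contraction g) to transfer it to $|X^\e_\cdot(x)|$; since $T(u)=|u|$ is exactly $1$-Lipschitz this is immediate, but it is worth spelling out that the resulting one-dimensional convergence plus characterization f) is what upgrades "$\wW_{p'}\to0$ as $r\to\infty$" to the stated moment convergence.
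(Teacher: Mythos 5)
Your proposal is correct. For the $r\to-\infty$ half it coincides with the paper's argument: both rest on the reverse triangle inequality applied to the decomposition $X^\e_t(x)=e^{-\qQ t}x+\e\oO_t$ (using $|a+b|^{p'}\gqq |a|^{p'}-|b|^{p'}$ for $p'\in(0,1]$ and the reverse Minkowski inequality for $p'\gqq 1$), on the divergence of $|e^{-\qQ(\ft^x_\e+r\cdot w_\e)}x|/\e$ as $r\to-\infty$ via Lemma \ref{jara} and (\ref{eq:belowabove}), and on the uniform boundedness of $\EE[|\oO_t|^{p'}]$. For the $r\to+\infty$ half your route differs mildly but genuinely from the paper's: you deduce the moment convergence from the already established Wasserstein window cutoff (Theorem \ref{th:linearwhite}) through the comparison $\big|(\EE[|U_1|^{p'}])^{\min\{1,1/p'\}}-(\EE[|U_2|^{p'}])^{\min\{1,1/p'\}}\big|\lqq \Wp(U_1,U_2)$, which follows from the domination property e) of Lemma \ref{lem:invariance} together with Minkowski's inequality (respectively the subadditivity $|x^{p'}-y^{p'}|\lqq|x-y|^{p'}$ for $p'<1$). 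The paper instead bypasses the Wasserstein distance entirely for this corollary and runs the same $L^{p'}$ triangle inequality directly on $e^{-\qQ t}x+\e\oO_t$, controlling $|e^{-\qQ t}x|/\e\to 0$ as $r\to\infty$ by Lemma \ref{lem:precisehartmandecomp}. Your version is more economical in that it reuses Theorem \ref{th:linearwhite} as a black box; the paper's is self-contained at the level of moments and avoids optimal couplings. One presentational caveat: the appeal to contraction g) and characterization f) in your opening is not what carries the argument, since f) cannot be invoked for fixed $r$ (only iterated limits are available); the quantitative norm-versus-Wasserstein inequality you state afterwards is what does the work. This is a matter of exposition, not a gap.
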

\noindent {For the proof we refer to Appendix \ref{ap:D}. }\\
\hfill
\begin{cor}\label{cor: delta-window}
Let the assumptions (and the notation) of Theorem \ref{th:profile}
be valid for some $p>0$.
Then we have 
\begin{equation*} 
\lim_{\e \ra 0} \wW_p(X^\e_{\delta\cdot \ft_\e}(x), \mu^\e) \cdot \e^{-1} 
 = ~\left.
\begin{cases} 
\infty &\mbox{ for }   \delta \in (0,1)\\
0 &\mbox{ for }  \delta>1
\end{cases}\right\}. 
\end{equation*}
\end{cor}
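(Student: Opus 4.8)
The plan is to derive Corollary \ref{cor: delta-window} directly from the key two-sided estimate (\ref{eq:coupling}) together with the spectral asymptotics of Lemma \ref{jara}, without invoking the full strength of the window result in Theorem \ref{th:linearwhite}. Recall that (\ref{eq:coupling}) gives, for $p\gqq 1$ and any $t\gqq 0$,
\[
\frac{|e^{-\qQ t}x|}{\e} - \Wp(\oO_t,\oO_\infty)\lqq \frac{\Wp(X^\e_t(x),\mu^\e)}{\e}\lqq \frac{|e^{-\qQ t}x|}{\e}+\Wp(\oO_t,\oO_\infty),
\]
and by (\ref{eq: OUthermalizationW}) the error term $\Wp(\oO_t,\oO_\infty)\lqq (C_0 e^{-q_* t})^{\min\{1,p'\}}\EE[|\oO_\infty|^{\min\{1,p'\}}]$ tends to $0$ as $t\to\infty$. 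Specializing to $t=\delta\cdot\ft_\e$ with $\ft_\e = \fq^{-1}|\ln\e| + \tfrac{\ell-1}{\fq}\ln|\ln\e|$ (writing $\ft_\e$ for $\ft^x_\e$), we have $\delta\cdot\ft_\e\to\infty$ as $\e\to 0$ for every fixed $\delta>0$, so $\Wp(\oO_{\delta\ft_\e},\oO_\infty)\to 0$. Hence it suffices to analyze the deterministic quantity $|e^{-\qQ\,\delta\ft_\e}x|/\e$.

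The second step is to insert the asymptotics of Lemma \ref{jara}. By (\ref{eq:asmatexp}) and the two-sided bound (\ref{eq:belowabove}), there are constants $0<c_-\lqq c_+<\infty$ and a function $t\mapsto \rho(t)\to 0$ such that for $t$ large
\[
(c_- - \rho(t))\,\frac{t^{\ell-1}}{e^{\fq t}}\lqq |e^{-\qQ t}x|\lqq (c_+ + \rho(t))\,\frac{t^{\ell-1}}{e^{\fq t}}.
\]
Plugging $t=\delta\ft_\e$ and using $e^{\fq\ft_\e}=\e^{-1}|\ln\e|^{\ell-1}$, one computes
\[
\frac{|e^{-\qQ\,\delta\ft_\e}x|}{\e} \asymp \frac{(\delta\ft_\e)^{\ell-1}}{\e\, e^{\fq\delta\ft_\e}} = (\delta\ft_\e)^{\ell-1}\,\e^{\delta-1}\,|\ln\e|^{(\ell-1)\delta},
\]
up to the bounded factors $c_\pm + \rho(\delta\ft_\e)$. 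Since $\delta\ft_\e$ and $|\ln\e|$ grow only polylogarithmically in $1/\e$ while $\e^{\delta-1}$ is a genuine power of $\e$, the power dominates: for $\delta\in(0,1)$ the exponent $\delta-1<0$ forces the expression to $+\infty$, and for $\delta>1$ the exponent $\delta-1>0$ forces it to $0$. Combining this with the sandwich (\ref{eq:coupling}) and the vanishing of $\Wp(\oO_{\delta\ft_\e},\oO_\infty)$ yields the claim for $p\gqq 1$; for $p\in(0,1)$ one argues identically starting from (\ref{eq:couplingp}) and the remark following it, replacing $\e$ by $\e^{p'}$ and $|e^{-\qQ t}x|$ by $|e^{-\qQ t}x|^{p'}$, which only rescales all exponents by $p'$ and does not affect the sign of $\delta-1$, while the additive constant $2\EE[|\oO_\infty|^{p'}]$ is harmless in the divergence regime and is dominated in the vanishing regime.

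The main (and essentially only) obstacle is bookkeeping: one must check that the polylogarithmic prefactors $(\delta\ft_\e)^{\ell-1}|\ln\e|^{(\ell-1)\delta}$ really are negligible against $\e^{\delta-1}$ uniformly, and that in the lower bound the term $c_- - \rho(\delta\ft_\e)$ stays bounded away from $0$ for $\e$ small (which holds since $\rho(\delta\ft_\e)\to 0$). Neither step is deep; the substance is entirely carried by Lemma \ref{jara} and the coupling inequality, both already available. I would present the argument in two short paragraphs, one reducing to the deterministic term via (\ref{eq:coupling})--(\ref{eq: OUthermalizationW}), and one evaluating that term via Lemma \ref{jara}, noting at the end that the case $p\in(0,1)$ follows mutatis mutandis from (\ref{eq:couplingp}).
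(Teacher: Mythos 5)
Your proposal is correct and follows essentially the same route as the paper's proof: reduce to the deterministic term via the coupling inequality (\ref{eq:coupling})/(\ref{eq:couplingp}) together with $\Wp(\oO_{\delta\ft_\e},\oO_\infty)\to 0$, then combine the two-sided bound (\ref{eq:belowabove}) of Lemma \ref{jara} with the scaling limit of $(\delta\ft_\e)^{\ell-1}e^{-\fq\delta\ft_\e}/\e$. The only blemish is a sign slip in your polylogarithmic bookkeeping (the factor should be $|\ln\e|^{-(\ell-1)\delta}$, not $|\ln\e|^{+(\ell-1)\delta}$), which is immaterial since, as you note, the power $\e^{\delta-1}$ dominates either way.
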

\noindent {This corollary justifies formula (\ref{eq: delta-window}) in the introduction. 
For the proof we refer to Appendix \ref{ap:D}. }\\

\begin{rem}\label{rem:windows}~
\begin{enumerate}
\item
In general
for $1\lqq p'\lqq p$
 there is no thermalization profile in the sense of Theorem \ref{th:profile} (and Theorem \ref{th:profileabstract}).
However, it is easy to see that a cutoff thermalization profile implies window cutoff thermalization.
The contrary not always holds. For instance, for different values of $u\in \omega(x)$ there is no unique candidate for the profile.
To be more precise, 
\[
\limsup_{\e\to 0}
\frac{\Wp(X^\e_{\ft^x_\e+r\cdot w_\e}(x),\mu^\e)}{\e}=
\frac{e^{-r  \fq w}}{\fq^{\ell-1}}|\hat{u}|
\]
and
\[
\liminf_{\e\to 0}
\frac{\Wp(X^\e_{\ft^x_\e+r\cdot w_\e}(x),\mu^\e)}{\e}=
\frac{e^{-r  \fq w}}{\fq^{\ell-1}}|\check{u}|,
\]
where $\check{u},\hat{u}\in \omega(x)$.
The discussion of the linear oscillator given in Subsection \ref{ex:osc} 
yields an example where $\om(z)$ is not contained in a sphere for any $z\neq 0$. 
The case of subcritical damping always exhibits complex eigenvalues which together 
with the precise structure of the dynamics excludes a thermalization profile 
and only window thermalization remains valid. 
\item The error estimate in Remark \ref{rem:explanationformula} item (4) remains untouched and item (5) is slightly adapted as follows.
Here we consider the error term
\begin{equation}\label{eq:speedm}
\left |\frac{e^{- \qQ \ft^x_\e}x}{\e} - \sum_{k=1}^{m} e^{i  \ft^x_\e\theta_k} v_k\right |,
\end{equation}
which analogously depends on the spectral structure of $\qQ$. 
Generically all eigenvalues $\la_1, \la_2, \dots, \la_d\in\CC$ have different real parts (up to pairs of complex conjugate eigenvalues) with multiplicity $1$.
Without loss of generality we label $\la_1, \la_2, \dots, \la_d\in\CC$ by ascending (positive) real parts. 
Moreover, $\fq= \mathsf{Re}(\la_1)$ in the generic case.
The speed of convergence of (\ref{eq:speedm}) is of
order $e^{-\mathfrak{g} \ft^x_\e} = K(x) \e^{\nicefrac{\mathfrak{g}}{\fq}}$,  where 
$\mathfrak{g}$ is given in (\ref{eq:ordendeconvergencia}) and $K(x)$  given in (\ref{eq:krepresentation}) is estimated identically.

\item In Subsection \ref{ex:complejo} we give an (ad hoc) linear $(2\times 2)$-system 
showing a thermalization profile in the presence of complex (conjugate) eigenvalues 
for all initial values. 
\item The example in Subsection \ref{ex:suspension} represents a system where the presence of a thermalization profile depends {on} the initial value $x$.
\item We finally construct in Subsection \ref{ex:multiplicity} a system with arbitrary high logarithmic corrections terms. 
\end{enumerate}
\end{rem}
 
 \section{\textbf{Physics examples}}\label{s:physicalexamples}
 \subsection{\textbf{Gradient systems}}\label{ss:gradientsystems}
For a symmetric $\qQ$, Theorem \ref{th:profile} applies to the gradient case 
\[
\ud X^{\e}_t=-\nabla \uU(X^{\e}_t)\ud t +\e  \ud L_t\qquad \textrm{ with } X^{\e}_0=x\neq 0
\]
for the quadratic potential form $\uU(z)=(1/2)z^*\qQ z$. 
Indeed, by the spectral decomposition we have an orthonormal basis $v_1,v_2,\ldots, v_d\in \RR^d$ with corresponding eigenvalues $0<\la_1\lqq \cdots \lqq \la_d$ such that 
\[
e^{-\qQ t}x=\sum_{j=1}^{d}e^{-\la_j t}\<v_j,x\>v_j.
\]
Let $
\tau(x)=\min\{j\in \{1,\ldots,d\}: \<v_j,x\>\neq 0\}$ and $J=\{j\in \{\tau(x),\ldots,d\}: \la_j=\la_{\tau(x)}\}$. 
Hence 
\[
\lim\limits_{t\to \infty}
e^{\la_{\tau(x)} t}e^{-\qQ t}x=\sum_{j\in J}\<v_j,x\>v_j\neq 0.
\]
That is, for $p\gqq 1$ the cutoff thermalization profile is  $\pP_x(r)=e^{-r\la_{\tau(x)}w}|\sum_{j\in J}\<v_j,x\>v_j|$.
More generally,
\cite{BP}, Proposition A.4.ii) yields a complete description of the spectral 
decomposition of non-symmetric $\qQ$ with real spectrum. 

\subsection{\textbf{The linear oscillator}}\label{ex:osc}
In this subsection we provide a complete discussion of the cutoff thermalization 
of the damped linear oscillator driven by different noises at small temperature. We consider 
\begin{align}\label{eq:linearoscillator}
\left\{
\begin{array}{r@{\;=\;}l}
\ud X^{\e}_t & Y^{\e}_t\ud t,\\
\ud Y^{\e}_t& - \kappa X^{\e}_t \ud t-\gamma Y^\e_t \ud t+ \e \ud L_t
\end{array}
\right.
\end{align}
with initial conditions $X^{\e}_0=x$,
$Y^{\e}_0=y$ and L\'evy noise $L=(L_t)_{\gqq 0}$ satisfying Hypothesis \ref{hyp:moments} for some $p>0$. Examples of interest are the following:\\
For $p\lqq \infty$ we cover 
\begin{enumerate}
\item standard Brownian motion, 
\item deterministic (linear) drift,  
\item discontinuous compound Poisson process with finitely many point increments. 
\end{enumerate}
For $p< \alpha$ for some $\alpha>0$ 
\begin{enumerate}
\item[(4)] $\alpha$-stable L\'evy flight with finite first moment for index $\alpha\in (1,2)$,
\item[(5)] $\alpha$-stable L\'evy flight with index $\alpha\in (0,1]$ 
 including the Cauchy flight when $\alpha=1$. 
 See \cite{garbaol} and \cite{sokolov} for a thorough discussion. 
\end{enumerate}
We rewrite the system (\ref{eq:linearoscillator}) as a vector valued Ornstein-Uhlenbeck process
\begin{align*}
\ud \left( \begin{array}{c} X_t^\e \\ Y_t^\e \end{array}\right) 
&= -\qQ \left( \begin{array}{c} X_t^\e \\ Y_t^\e \end{array}\right) \ud t + \e \ud \lL_t,
\end{align*}
where 
\begin{equation}\label{eq:Qlinearoscillator}
\qQ:=\left( \begin{array}{cc} 0 & -1\\ \kappa & \gamma\end{array} \right)\quad \textrm{ and } \quad 
\lL_t:=\left( \begin{array}{c} 0 \\  L_t \end{array}\right).
\end{equation}
Let $z=(x,y)^*\not =(0,0)^*$. In the sequel, we compute $e^{-\qQ t}z$.
The eigenvalues of $-\qQ$ are given by 
\begin{align*}
\la_{\pm}=\frac{-\gamma \pm \sqrt{\gamma^2 - 4\kappa}}{2} .
\end{align*}
Note that for any $\gamma, \kappa>0$ 
the respective real parts are strictly negative.

\bigskip 
\subsubsection{{\bf{Overdamped linear oscillator}}: 
$\Delta = \gamma^2 - 4\kappa>0$.} In this case, $-\qQ$ has two real different eigenvalues
 \[
 \la_{-}:= \frac{1}{2} (-\gamma - \sqrt{\Delta})<\frac{1}{2} (-\gamma + \sqrt{\Delta})=:\la_{+} <0
 \]
with the respective eigenvectors $v_-$ and $v_+$.
The exponential matrix is given by 
\begin{displaymath}
e^{-\qQ t}=(v_- \; v_+)\diag\left(e^{-\la_- t}, e^{-\la_+ t}\right)(v_-\; v_+)^{*}\quad
\textrm{ for } t\gqq 0.
\end{displaymath}
Recall $z=(x,y)^*\neq (0,0)^*$.
We denote by $\tilde{z}:=(v_-\; v_+)^{*}z=(\tilde{z}_{1},\tilde{z}_{2})^*$ the  coordinate change of $z$.
Note that 
\begin{displaymath}
e^{-\qQ t}z=(v_- \; v_+)(\tilde{z}_{1}e^{-\la_- t},\tilde{z}_{2}e^{-\la_+ t})^{*}.
\end{displaymath}
This formula yields that for any $z\not={0}$ there exist an explicit $\fq>0$ and $u \not={0}$ such that
\begin{eqnarray}\label{q1}
\lim\limits_{t\rightarrow \infty}
e^{\fq t}e^{-\qQ t }z=u_z.
\end{eqnarray}
Indeed, if $\tilde{z}_{1}=0$ then $\tilde{z}_{2}\not=0$ and we have
$\lim\limits_{t\rightarrow \infty}e^{\la_+ t}e^{-\qQ t }z=(v_- \; v_+)(0,\tilde{z}_{2})^{*}=:u_z\not={0}.$ If $\tilde{z}_{2}=0$ then $\tilde{z}_{1}\not=0$ and we have $\lim\limits_{t\rightarrow \infty}e^{\la_- t}e^{-\qQ t }z=
(v_- \; v_+)(\tilde{z}_{1},0)^{*}=:u_z\not={0}$. Finally, if $\tilde{z}_{1}\not=0$ and $\tilde{z}_{2}\not=0$. Then
$\lim\limits_{t\rightarrow \infty}e^{\la_- t}e^{-\qQ t}z=(v_- \; v_+)(\tilde{z}_{1},0)^{*}=:u_z\not={0}.$
In particular, for any $z\neq 0$, the omega limit set $\omega(z)$ defined in (\ref{eq:omegalimit}) consists of a single point $u_z$.

Hence for the noises (1)-(4), Theorem \ref{th:profile} applies for $1\lqq p'\lqq p$ and thermalization profile holds at the time scale
\[
\ft^x_\e=\frac{1}{\fq}|\ln(\e)|,
\]
with profile
\[
\pP_z(r)=e^{-r  \fq w}|u_z|
\] for all  window sizes $w>0$.
Roughly speaking, for any $1\lqq p'\lqq p$ Theorem \ref{th:profile} yields for some positive $K_{p, p'}$ 
\[
\Wp(X^\e_{\ft^x_\e+r\cdot w}(x),\mu^\e)\approx_{\e} \e\, e^{-r  \fq w}|u|+\e^{2}K_{p, p'}\EE[|\oO_\infty|].
\]
For the noise (5), Theorem \ref{th:profileabstract} still implies profile thermalization, however, the profile is  given by the abstract formula
\[
\pP_{x, p'}(r)=\Wp\Big({e^{-r  \fq w}}u_z+
\oO_\infty,\oO_\infty\Big)
\]
for any $p'\lqq p<\alpha$.
\bigskip 
\subsubsection{{\bf{Critically damped linear oscillator}}: $\Delta = \gamma^2 - 4\kappa=0$} 
In this case,
$-\qQ$ has a repeated real  eigenvalue $\la:=\la_-=\la_+=-\gamma/2<0$ and the matrix exponential is given by
\[
e^{-\qQ t}=e^{\la t}I_2 + e^{\la t} t(-\qQ-\la I_2)\quad \textrm{ for } t\gqq 0.
\]
Let $z=(x,y)^*\neq (0,0)^*$. 
On the one hand, if $z\in \mathrm{Ker}(-\qQ-\la I_2)$, that is, in Lemma \ref{jara} we have
$\ell=1$, and 
$e^{-\la t}e^{-\qQ t}z=u_z$ for $u_z=z$.
On the other hand, $z\neq \mathrm{Ker}(-\qQ-\la I_2)$ which corresponds to $\ell=2$ yields
\[
\lim_{t\to \infty}\frac{e^{-\la t}}{t}e^{-\qQ t}z=(-\qQ-\la I_2)z=:u_z \neq 0.
\]
In particular, for any $z\neq 0$, the omega limit set $\omega(z)$ defined in (\ref{eq:omegalimit}) consists of the a single point $u_z$. Hence for the noises (1)-(4)
Theorem \ref{th:profile} still applies for $1\lqq p'\lqq p$ and profile thermalization holds true at the modified time scale
\[
\ft^x_\e=\frac{1}{\fq}|\ln(\e)|+\frac{\ell-1}{\fq}\ln(|\ln(\e)|)
\]
with the modified profile
\[
\pP_z(r)=\frac{{e^{-rw(\gamma /2)} }}{(\gamma/2)^{\ell-1}}|u|
\] for all  window sizes $w>0$.
Roughly speaking, for any $1\lqq p'\lqq p$ Theorem \ref{th:profile} yields for some positive constant $K_{p, p'}$ 
\[
\Wp(X^\e_{\ft^x_\e+r\cdot w}(x),\mu^\e)\approx_{\e} ~\e\cdot \frac{{e^{-rw (\gamma/2)} }}{(\gamma/2)^{\ell-1}}|u|
+\e^{2} |\ln(\e)|^{\ell-1}\cdot K_{p, p'}\, \EE[|\oO_\infty|].
\]
For the noise (5), Theorem \ref{th:profileabstract} still implies profile thermalization, however, the modified profile is also given by the abstract formula
\[
\pP_{x, p'}(r)=\Wp\Big({e^{-r  \fq w}}u_z+
\oO_\infty,\oO_\infty\Big)
\]
for any $p'\lqq p<\alpha$.

\bigskip 
In the sequel, we discuss the general case of complex conjugate eigenvalues in order to treat the subcritical case.
\subsubsection{{\bf{Non-normal growth of the linear oscillator for complex eigenvalues}}} 

Recall that the eigenvalues of $\qQ$ are given by
\[
\la_{-}=\hat\la-i\check\la\quad\textrm{ and }\quad \la_{+}=\hat \la+i\check\la, \quad \check \la \neq 0. 
\]
By the Jacobi formula, see for instance Theorem 1 in \cite{MN99}, Part Three, Sec 8.3, we have
\begin{align}\label{eq:jacobi}
\det(e^{\hat\la t} e^{-\qQ t}) &=  e^{2\hat\la t}  e^{-\trace(\qQ t)} = 1.
\end{align}
By the Lagrange interpolation theorem (see Theorem 7.11, p.209, in \cite{APOSTOL}) we have
\begin{align}
e^{-\qQ t}&=\frac{e^{-t \la_-}}{-2i \check\la} (-\qQ + \la_+ I_2)+ \frac{e^{-t \la_+}}{2i\check\la} (-\qQ + \la_- I_2)\nonumber\\
&=
\frac{e^{-t\hat\la}e^{it \check\la}}{2i \check\la} (\qQ - \la_+ I_2)- \frac{e^{-t\hat\la}e^{-it \check\la}}{2i\check\la} (\qQ - \la_- I_2)\nonumber\\
&= -\frac{e^{-t\hat\la}}{\check\la}\mathsf{Re}\Big(e^{it \check\la} (\qQ - \la_+ I_2)\Big).
\label{eq:matrixexponentialrepresentation}
\end{align}
Let $z=(x,y)^*\neq (0,0)^*$.
 The preceding equality yields
\begin{equation*}
|e^{t \hat\la}
 e^{-\qQ t}z|=\frac{1}{|\check\la|}|\mathsf{Re}\left(e^{i t \check\la} (\qQ - \la_+ I_2)z\right)|.
 \end{equation*}
Moreover, by (\ref{eq:jacobi}) we deduce 
\begin{equation}\label{eq:liminfpos}\liminf\limits_{t\ra \infty} | e^{\hat\la t} e^{-\qQ t} z | >0.\end{equation}
Additionally by the periodicity we have that 
\begin{equation}\label{eq:limsupfin}\limsup\limits_{t\ra \infty} | e^{\hat\la t} e^{-\qQ t} z | <\infty.\end{equation}
Note that $|e^{t \hat\la}
 e^{-\qQ t}z|$ is a constant function if and only if 
 $|\mathsf{Re}\left(e^{i t \check\la} (-\qQ + \la_+ I_2)z\right)|$ is so, too.
In the sequel, we characterize when the function
\[
t\mapsto 
|\mathsf{Re}\left(e^{i t \check\la} (\qQ - \la_+ I_2)z\right)|\quad\textrm{ is constant}.
\]
Let 
\begin{equation}\label{eq:base}
a(z):=\mathsf{Re}((\qQ - \la_+ I_2)z)\qquad \mbox{ and }\qquad b(z):=\mathsf{Im}((\qQ - \la_+ I_2)z).
\end{equation}
Note that
\begin{equation}\label{eq:coordenadas}
\mathsf{Re}(e^{i t \check\la} (\qQ - \la_+ I_2)z)=\cos(\check\la t)a(z)-\sin(\check\la t)b(z).
\end{equation}
Combining (\ref{eq:matrixexponentialrepresentation}) with (\ref{eq:coordenadas}) yields 
\begin{equation}\label{eq:omegalimitrepresentation}
e^{\hat\la t} e^{-\qQ t}z  = -\frac{1}{\check\la} \Big(\cos(\check\la t)a(z)-\sin(\check\la t)b(z)\Big). 
\end{equation}
As a consequence, the Pythagoras theorem yields
\begin{align}
|\mathsf{Re}(e^{i t \check\la} (\qQ - \la_+ I_2)z)|^2&=|\cos(\check\la t)a(z)-\sin(\check\la t)b(z)|^2
\nonumber\\
&=\cos^2(\check\la t)|a(z)|^2+
\sin^2(\check\la t)|b(z)|^2-
2\cos(\check\la t)\sin(\check\la t)\<a(z),b(z)\>.
\label{eq:norma}
\end{align}
\begin{rem}
Note that equation (\ref{eq:norma}) does not require any specific structure of $\qQ$. 
It only uses that $d=2$, $\qQ \in \RR^{2\otimes 2}$ and the existence 
of conjugate complex, non-real eigenvalues. 
For this (more general) case we state the following lemma. 
\end{rem}

\begin{lem}[Profile cutoff characterization by the absence of non-normal growth]\label{lem:equivale}
For $d=2$ the following statements are equivalent.
 \begin{enumerate}
\item[i)] The function 
$t\mapsto 
|\mathsf{Re}(e^{i t \check\la} (\qQ - \la_+ I_2)z)|$ is constant.
\item[ii)] $|a(z)|^2=|b(z)|^2$ and $\<a(z),b(z)\>=0$, where $a(z)$ and $b(z)$ are given in 
(\ref{eq:base}). 
\item[iii)] For some $R>0$
\[
\omega(z)\subset \{|u| = R\},
\]
where 
\[\om(z) = \{u\in \RR^2~|~e^{\hat\la t_n} e^{-\qQ t_n} z \ra u ~\mbox{ for some }(t_n)_{n\in \NN}, ~t_n \ra \infty\}.\]
\end{enumerate}
\end{lem}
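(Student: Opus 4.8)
The plan is to prove the chain of equivalences $i) \Leftrightarrow ii) \Leftrightarrow iii)$ using the explicit representation \eqref{eq:norma} together with \eqref{eq:omegalimitrepresentation}. The computation \eqref{eq:norma} reduces everything to analyzing the single trigonometric function
\[
f(t) := |a(z)|^2 \cos^2(\check\la t) + |b(z)|^2 \sin^2(\check\la t) - 2\<a(z), b(z)\>\cos(\check\la t)\sin(\check\la t),
\]
so the whole lemma is really a statement about when $f$ is constant.

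For $i) \Leftrightarrow ii)$: Rewriting $f$ via the double-angle formulas gives
\[
f(t) = \frac{|a(z)|^2 + |b(z)|^2}{2} + \frac{|a(z)|^2 - |b(z)|^2}{2}\cos(2\check\la t) - \<a(z), b(z)\>\sin(2\check\la t).
\]
Since $\check\la \neq 0$, the functions $1$, $\cos(2\check\la t)$, $\sin(2\check\la t)$ are linearly independent on $[0,\infty)$, so $f$ is constant if and only if both oscillating coefficients vanish, i.e. $|a(z)|^2 = |b(z)|^2$ and $\<a(z), b(z)\> = 0$. This gives the equivalence directly, and one should note that $f$ is never identically zero because $(\qQ - \la_+ I_2)z \neq 0$ (the matrix $\qQ - \la_+ I_2$ is invertible, since $\la_+$ is not an eigenvalue of $\qQ$ — the eigenvalues of $\qQ$ are $\la_\pm$, wait, here $\la_+$ \emph{is} an eigenvalue of $-\qQ$; I would instead argue positivity via \eqref{eq:liminfpos}, which shows $\liminf_t |e^{\hat\la t}e^{-\qQ t}z| > 0$ and hence $f$ stays bounded below by a positive constant).

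For $ii) \Leftrightarrow iii)$: From \eqref{eq:omegalimitrepresentation}, the curve $t \mapsto e^{\hat\la t}e^{-\qQ t}z = -\check\la^{-1}(\cos(\check\la t)a(z) - \sin(\check\la t)b(z))$ is periodic with period $2\pi/|\check\la|$, continuous, and (by \eqref{eq:liminfpos}, \eqref{eq:limsupfin}) stays in a compact annulus away from the origin. Hence its $\omega$-limit set equals the image of one full period, namely $\om(z) = \{-\check\la^{-1}(\cos\phi\, a(z) - \sin\phi\, b(z)) : \phi \in [0,2\pi)\}$. This image is contained in a sphere $\{|u| = R\}$ precisely when $|\cos\phi\, a(z) - \sin\phi\, b(z)|^2$ is independent of $\phi$, which by the same computation as above is exactly condition $ii)$; in that case $R = |a(z)|/|\check\la| = |b(z)|/|\check\la|$. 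So $iii)$ is literally the statement that $f$ (up to the constant factor $\check\la^{-2}$) is constant, closing the cycle.

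I do not expect a genuine obstacle here — the lemma is essentially the observation that a planar curve $\phi \mapsto \cos\phi\, a - \sin\phi\, b$ traces a circle iff $\{a,b\}$ is an orthogonal pair of equal-length vectors (the "no non-normal growth" / "the ellipse is a circle" condition). The only points requiring a little care are: (a) justifying that the $\omega$-limit set is the \emph{whole} orbit closure, which follows from periodicity; (b) confirming $f \not\equiv 0$ so that the equivalences are not vacuous, for which \eqref{eq:liminfpos} suffices; and (c) keeping track of the cases where $a(z)$ or $b(z)$ could individually vanish — but if, say, $b(z) = 0$ then $ii)$ forces $|a(z)| = 0$ too, contradicting \eqref{eq:liminfpos}, so in fact under the standing hypotheses both are nonzero whenever $i)$ holds, and the generic (non-constant) case has them spanning a genuine ellipse.
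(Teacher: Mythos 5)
Your proposal is correct and follows essentially the same route as the paper: both arguments rest entirely on the expansion (\ref{eq:norma}) and on the fact that $A\cos(2\check\la t)+B\sin(2\check\la t)$ is constant only if $A=B=0$ (the paper extracts this by evaluating at specific times $t_n$ rather than invoking linear independence, and for iii) it identifies three specific accumulation points of $\om(z)$ instead of your slightly cleaner observation that periodicity makes $\om(z)$ the entire image of one period). Your fallback to (\ref{eq:liminfpos}) to guarantee $R>0$ and that $a(z),b(z)$ do not both vanish is the right fix for your momentary hesitation about $\qQ-\la_+I_2$: that matrix is in fact singular over $\CC$, but $(\qQ-\la_+I_2)z\neq 0$ for every real $z\neq 0$ because $\qQ z$ is real while $\la_+ z$ has nonzero imaginary part.
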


\begin{proof}
The proofs of ii)$\Longrightarrow$  i) and ii) $\Longrightarrow$ iii) 
are immediately from (\ref{eq:norma}). 

\noindent We continue with i)$\Longrightarrow$ ii) and assume that $t\mapsto 
|\mathsf{Re}(e^{i t \check\la} (\qQ - \la_+ I_2)z)|$ is constant. 
Evaluating in $t_n=\frac{\pi n}{\check\la}$, $n\in \mathbb{N}$ we obtain
\[
|\mathsf{Re}(e^{i t_n \check\la} (\qQ - \la_+ I_2)z)|^2=|a(z)|^2.
\]
Now, we evaluate in $s_n=\frac{\pi+2n \pi}{2\check\la}$, $n\in \mathbb{N}$ and deduce
\[
|\mathsf{Re}(e^{i s_n \check\la} (\qQ - \la_+ I_2)z)|^2=|b(z)|^2.
\]
Hence $|a(z)|^2 = |b(z)|^2$ as in ii). 
Inserting the preceding equalities in (\ref{eq:norma}), we have for any $t\gqq 0$ 
\[
|a(z)|^2=
|\mathsf{Re}(e^{i t \check\la} (\qQ - \la_+ I_2)z)|^2=|a(z)|^2-2
\cos(\check\la t)\sin(\check\la t)\<a(z),b(z)\>.
\]
Since $\check\la \neq 0$ the latter implies $\<a(z),b(z)\>=0$.

We continue with iii) $\Longrightarrow$ ii). 
For the sequence $t_n = \frac{2\pi n}{\check\la}$, $n\in \NN$,
applied to (\ref{eq:omegalimitrepresentation}) 
yields $-\frac{a(z)}{\check\la} \in \om(z)$. This implies $R = |\frac{a(z)}{\check\la}|$. 
For the sequence $t_n = \frac{1}{\check\la}(2\pi n+ \frac{\pi}{2})$, $n\in \NN$, we have 
$\frac{b(z)}{\check\la}\in \om(z)$, and hence also $R = |\frac{b(z)}{\check\la}|$. 
This and $\check\la \neq 0$ gives $|a(z)| = |b(z)|$. 
For the inner product we use that (\ref{eq:matrixexponentialrepresentation}) 
and (\ref{eq:norma}) imply 
\begin{align*}
|e^{t \hat\la}
 e^{-\qQ t}z|^2&=\frac{1}{\check\la^2}|\cos(\check\la t)a(z)-\sin(\check\la t)b(z)|^2
\nonumber\\
&\hspace{-1.5cm}=\frac{1}{\check\la^2}\cos^2(\check\la t)|a(z)|^2
+\frac{1}{\check\la^2}\sin^2(\check\la t)|b(z)|^2-\frac{2}{\check\la^2}
\cos(\check\la t)\sin(\check\la t)\<a(z),b(z)\> \\
&\hspace{-1.5cm}=\cos^2(\check\la t)R^2
+\sin^2(\check\la t)R^2-\frac{2}{\check\la^2}
\cos(\check\la t)\sin(\check\la t)\<a(z),b(z)\> \\
&\hspace{-1.5cm}=R^2-\frac{2}{\check\la^2}
\cos(\check\la t)\sin(\check\la t)\<a(z),b(z)\>. 
\label{eq:norma2}
\end{align*}
For $t_n = \frac{\pi/4+ 2\pi n}{\check\la}$ in (\ref{eq:omegalimitrepresentation}) we have 
$\frac{\sqrt{2}}{2 \check\la} (b(z) - a(z)) \in \om(z).$ In addition, for this $t_n$ we obtain
\begin{align*}
R^2 = \lim_{n\ra \infty} |e^{t_n \hat\la}e^{-\qQ t}z|^2= R^2-\frac{4}{\check\la}\<a(z),b(z)\>,
\end{align*}
which yields that $\<a(z),b(z)\>= 0$. 
\end{proof}

With this result at hand we complete the discussion of the linear oscillator in the sequel.  

\subsubsection{{\bf{Subcritically damped linear oscillator}}: $\Delta = \gamma^2 - 4\kappa<0$}

Recall that the eigenvalues of $\qQ$ in the case of (\ref{eq:linearoscillator}) are given by
\[
\la_{-}=\hat\la-i\check\la\quad\textrm{ and }\quad \la_{+}=\hat\la+i\check\la,
\]
where 
\[
\hat\la=\frac{\gamma}{2}\quad \textrm{ and } \quad\check\la=\frac{\sqrt{4\kappa-\gamma^2}}{2}\neq 0.
\]
By (\ref{eq:liminfpos}) and (\ref{eq:limsupfin}) for the noises (1)-(5) Theorem \ref{th:linearwhite} implies window thermalization for any $0<p'\lqq p$ at time scale 
\[
\ft^x_\e = \frac{2}{\gamma} |\ln(\e)|\quad \mbox{ for any initial condition }\quad (x,y) \in \RR^2,\; (x, y) \neq (0, 0).
\]
In the sequel, by using the shift linearity property d) in Lemma \ref{lem:invariance} we exclude
the existence of a cutoff thermalization profile for any $1\lqq p'\lqq p$ and noises (1)-(4).
\begin{lem}\label{lem: subcritical oscillator} 
Let $1\lqq p'\lqq p$.
For any $\gamma>0$ and  $\kappa>0$ such that $\gamma^2-4\kappa<0$, there is no cutoff thermalization profile for any $(x,y)\neq (0,0)$.
\end{lem}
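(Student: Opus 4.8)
The plan is to reduce the assertion, through Theorem~\ref{th:profile} and Lemma~\ref{lem:equivale}, to the incompatibility of two homogeneous quadratic equations in the initial datum $z=(x,y)^*$, and then to reach a contradiction by eliminating one of the two variables. In the subcritical regime $\gamma^2-4\kappa<0$ the matrix $\qQ$ from \eqref{eq:Qlinearoscillator} has two distinct, hence simple, complex conjugate eigenvalues $\la_\pm=\hat\la\pm i\check\la$ with $\hat\la=\gamma/2>0$ and $\check\la=\sqrt{4\kappa-\gamma^2}/2\neq0$, so that in Lemma~\ref{jara} one has $\fq=\hat\la$, $\ell=1$, and $e^{\hat\la t}e^{-\qQ t}z-\sum_k e^{it\theta_k}v_k\to0$; in particular the $\omega$-limit set of Theorem~\ref{th:profile} coincides with the set $\om(z)$ of Lemma~\ref{lem:equivale}. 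By the equivalence i)$\Leftrightarrow$ii) of Theorem~\ref{th:profile} combined with i)$\Leftrightarrow$iii)$\Leftrightarrow$ii) of Lemma~\ref{lem:equivale}, a cutoff thermalization profile exists for a fixed $z\neq(0,0)^*$ and some $1\lqq p'\lqq p$ — equivalently, by the sandwich estimate \eqref{eq:coupling}, for every such $p'$ — if and only if
\[
|a(z)|^2=|b(z)|^2\qquad\text{and}\qquad\<a(z),b(z)\>=0,
\]
with $a(z),b(z)$ as in \eqref{eq:base}. Thus it suffices to prove that these two identities have no common solution with $z\neq(0,0)^*$.

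First I would compute $a(z)$ and $b(z)$ explicitly from $\qQ=\bigl(\begin{smallmatrix}0&-1\\\kappa&\gamma\end{smallmatrix}\bigr)$, $\la_+=\hat\la+i\check\la$ and $\gamma-\hat\la=\hat\la$: one finds $a(z)=(-\hat\la x-y,\ \kappa x+\hat\la y)^*$ and, crucially, $b(z)=-\check\la z$. Since $\check\la\neq0$, the orthogonality $\<a(z),b(z)\>=0$ is then equivalent to $\<a(z),z\>=0$, i.e.\ to
\begin{equation}\label{eq:subcritC2}
\hat\la(y^2-x^2)+(\kappa-1)\,xy=0,
\end{equation}
which already forces $x\neq0$ (if $x=0$ it yields $\hat\la y^2=0$, hence $z=0$). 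Similarly $|b(z)|^2=\check\la^2(x^2+y^2)$, and expanding $|a(z)|^2$ and inserting the relation $\hat\la^2+\check\la^2=\kappa$ turns $|a(z)|^2=|b(z)|^2$ into
\begin{equation}\label{eq:subcritC1}
(2\hat\la^2+\kappa^2-\kappa)\,x^2+(1+2\hat\la^2-\kappa)\,y^2+2\hat\la(1+\kappa)\,xy=0.
\end{equation}

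The final step is the elimination. Using \eqref{eq:subcritC2} in the form $\hat\la y^2=\hat\la x^2-(\kappa-1)xy$ to replace the term in $y^2$ in \eqref{eq:subcritC1}, a short computation collapses \eqref{eq:subcritC1} to
\[
\bigl(4\hat\la^2+(\kappa-1)^2\bigr)\,\frac{x}{\hat\la}\,(\hat\la x+y)=0;
\]
the only delicate point is the cancellation that makes the coefficient of $xy$ equal to the same positive factor $4\hat\la^2+(\kappa-1)^2$ that multiplies $x^2$ (equivalently, the identity $2\hat\la^2(1+\kappa)-(1+2\hat\la^2-\kappa)(\kappa-1)=4\hat\la^2+(\kappa-1)^2$). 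Since $\hat\la=\gamma/2>0$ and $x\neq0$, this forces $y=-\hat\la x$; substituting $y=-\hat\la x$ into the left-hand side of \eqref{eq:subcritC2} produces $\hat\la(\hat\la^2-\kappa)x^2=-\hat\la\,\check\la^2\,x^2$, which is nonzero because $\hat\la>0$, $\check\la\neq0$ and $x\neq0$ — contradicting \eqref{eq:subcritC2}. Hence no $z\neq(0,0)^*$ satisfies both identities, so the linear oscillator admits no cutoff thermalization profile at any initial value, for any $1\lqq p'\lqq p$.

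The genuine obstacle is purely bookkeeping — carrying the substitution so that \eqref{eq:subcritC1} factors as above — together with the care needed in the reduction: simplicity of the eigenvalues, $\ell=1$, and the matching of the two $\omega$-limit sets via Lemma~\ref{jara}. As a conceptual cross-check one can bypass the algebra entirely: $e^{\hat\la t}e^{-\qQ t}=e^{-(\qQ-\hat\la I_2)t}$ is a linear flow with purely imaginary eigenvalues $\pm i\check\la$, so every orbit is an ellipse avoiding the origin, whereas $\tfrac{\ud}{\ud t}|e^{-(\qQ-\hat\la I_2)t}z|^2$ is, along the orbit, the quadratic form of $\qQ+\qQ^*-\gamma I_2=\bigl(\begin{smallmatrix}-\gamma&\kappa-1\\\kappa-1&\gamma\end{smallmatrix}\bigr)$, which is nondegenerate and indefinite; an ellipse through a nonzero point cannot lie in its zero set (a union of two lines), so $t\mapsto|e^{-(\qQ-\hat\la I_2)t}z|$ is non-constant for every $z\neq0$, which by Lemma~\ref{lem:equivale} again rules out a profile.
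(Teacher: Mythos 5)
Your proof is correct and follows essentially the same route as the paper: reduce via Lemma \ref{lem:equivale} to the incompatibility of $\<a(z),b(z)\>=0$ and $|a(z)|^2=|b(z)|^2$, then eliminate; your substitution of the $y^2$-term (rather than the paper's $xy$-term) yields $y=-\hat\la x$ directly and avoids the paper's separate treatment of $\kappa=1$, a minor simplification. The closing geometric cross-check (elliptic orbits of $e^{-(\qQ-\hat\la I_2)t}$ versus the nondegenerate indefinite quadratic form of $\qQ+\qQ^*-\gamma I_2$) is also valid and would serve as a self-contained alternative proof.
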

\begin{proof}
We apply Lemma \ref{lem:equivale} to (\ref{eq:Qlinearoscillator}).  
Recall $z = (x, y)^* \neq 0$. A straightforward calculation yields
\[
a(z) = \Big(-\frac{\gamma}{2}x - y, \kappa x + \frac{\gamma}{2} y\Big)^* 
\qquad \textrm{ and } \qquad
b(z) = - \frac{\sqrt{4 \kappa - \gamma^2}}{2} (x, y)^*.
\]
The condition $\<a(z),b(z)\>=0$ reads as 
\begin{align*}
0 =  -\Big(\frac{\gamma}{2}x + y\Big)x + \Big(\kappa x + \frac{\gamma}{2} y\Big) y 
= - \frac{\gamma}{2} x^2 - xy + \kappa x y + \frac{\gamma}{2} y^2 
= - \frac{\gamma}{2} x^2 + (\kappa-1) xy + \frac{\gamma}{2} y^2,
\end{align*}
that is, 
\begin{equation}\label{eq: orth}
\gamma x^2 - \gamma y^2 - 2 (\kappa-1) xy = 0. 
\end{equation}
Since $(x,y)\neq (0,0)$,
the preceding equality yields $x\neq 0$ and $y\neq 0$.
The condition $|a(z)|^2 = |b(z)|^2$ is equivalent to 
\begin{align*}
\frac{\gamma^2}{4} x^2 + y^2 + \gamma xy + \kappa^2 x^2 + \frac{\gamma^2}{4}y^2 + \kappa \gamma xy
&= \Big(\kappa - \frac{\gamma^2}{4}\Big) (x^2 +y^2). 
\end{align*}
Simplifying we obtain 
\begin{equation}\label{eq: iso}
(\gamma^2 + 2 \kappa (\kappa-1)) x^2 + (\gamma^2 + 2 (1-\kappa)) y^2 + 2 \gamma (\kappa+1) xy = 0. 
\end{equation}
For $\kappa = 1$ we have that (\ref{eq: orth}) yields $x^2 = y^2$. 
Substituting in (\ref{eq: iso}) we obtain 
\[
0 = \gamma^2 x^2 \pm 2 \gamma x^2 = (\gamma^2 \pm 2\gamma ) x^2.  
\]
Since $x^2>0$ and $\gamma >0$, the unique solution is $\gamma =2$, 
which implies $\gamma^2 - 4 \kappa = 4 - 4= 0$ and gives a contradiction to the subcritical damping $\gamma^2 - 4\kappa <0$ of this case. As a consequence $\kappa=1$ excludes profile thermalization for any parameters $\gamma>0$ and $(x, y)\neq (0,0)$. 
In the sequel, we assume $\kappa \neq 1$. Multiplying (\ref{eq: iso}) by $(\kappa -1)$ we have 
\begin{equation}\label{eq: iso2}
((\kappa -1) \gamma^2 + 2 \kappa (\kappa-1)^2) x^2 + ((\kappa -1) \gamma^2 - 2 (\kappa-1)^2) y^2 + 2 \gamma (\kappa-1)(\kappa+1) xy = 0. 
\end{equation}
Inserting the expression for $ 2 (\kappa-1) xy $ being given in (\ref{eq: orth}) into (\ref{eq: iso2}) we obtain 
\begin{align*}\label{eq: iso3}
0 &= ((\kappa -1) \gamma^2 + 2 \kappa (\kappa-1)^2) x^2 + ((\kappa -1) \gamma^2 - 2 (\kappa-1)^2) y^2 + (\kappa+1) \gamma^2 (x^2 - y^2)\nonumber\\
&= ((\kappa -1) \gamma^2 + 2 \kappa (\kappa-1)^2 + (\kappa +1) \gamma^2) x^2 + ((\kappa -1) \gamma^2 - 2 (\kappa-1)^2-  (\kappa + 1)\gamma^2)y^2\\
&= 2(\kappa( \gamma^2 + (\kappa-1)^2) x^2 -2 ( \gamma^2 + (\kappa-1)^2)y^2.
\end{align*}
Since $\gamma^2+(\kappa-1)^2>0$, we obtain
$\kappa x^2 -y^2=0$.
In other words, $y=\pm \sqrt{k}x$. Substituting in (\ref{eq: orth}) we have
\begin{align*}
0&=\gamma x^2-\gamma^2 y^2-2(\kappa-1)xy=\Big(\gamma-\gamma \kappa\mp2(\kappa-1)\sqrt{\kappa}\Big)x^2
\end{align*}
which implies $\gamma-\gamma \kappa\mp2(k-1)\sqrt{\kappa}=0$.
Hence $\gamma(1-\kappa)=\pm 2(\kappa-1)\sqrt{\kappa}$.
Since $\kappa \neq 1$ we have 
$\gamma=\mp 2\sqrt{\kappa}$ which implies $\gamma^2-4\kappa=0$ and gives a contradiction to the subcritical damping $\gamma^2-4\kappa<0$.
As a consequence for any $\kappa>0$ and $\gamma>0$ such that $\gamma^2-4\kappa<0$, there is no profile thermalization for any initial condition $(x,y)\neq (0,0)$.  
\end{proof}
This concludes the complete analysis of the linear oscillator (\ref{eq:linearoscillator}). 

\subsection{\textbf{Linear chain of oscillators in a thermal bath at low temperature}}\label{ss:linearchain}
\subsubsection{\textbf{Window cutoff thermalization for the linear chain of oscillators}}
Our results cover the setting of Jacobi chains of $n$ oscillators with nearest neighbor interactions coupled to heat baths at its two ends, as discussed in Section 4.1 
in \cite{RAQUEPAS} and Section 4.2 in \cite{JPS17}. For the sake of simplicity we show  window cutoff thermalization for $n$ oscillators with the Hamiltonian 
\begin{align*}
\hH:\RR^n & \times  \RR^n \ra \RR\\ 
&(p, q) \mapsto \hH(p, q) := \frac{1}{2} \sum_{i=1}^n p_i^2 + \frac{1}{2} \sum_{i=1}^{n} \gamma q_i^2  
+ \frac{1}{2} \sum_{i=1}^{n-1} \kappa (q_{i+1}- q_i)^2.
\end{align*}
Coupling the first and the $n$-th oscillator to a Langevin heat bath each 
with positive temperature $\e^2$ and positive coupling constants $\varsigma_1$ and $\varsigma_n$ yields 
for $X^\e = (X^{1, \e}, \dots, X^{2n, \e}) = (p^\e, q^\e) = (p^\e_1, \dots, p^\e_n, q^\e_1, \dots, q^\e_n)$ 
the system 
\begin{align*}
\ud X^\e_t = -\qQ X^\e_t \ud t + \e\ud L_t,
\end{align*}
where $\qQ$ is a $2n\times 2n$-dimensional real matrix of the following shape 
\begin{align*}
\qQ = \left(
\begin{array}{ccccc|ccccccc}
\varsigma_1 &0& \dots & 0&&\kappa + \gamma &-\kappa&0&0& \dots  & 0\\             
 0& 0 &\dots&0 &&-\kappa & 2\kappa + \gamma & - \kappa& 0&0 \dots &0\\             
&&\ddots&&&\dots&\ddots&&\ddots&\ddots&\\
&&\ddots&&&\dots&\ddots&&\ddots&\ddots&\\
 0&&\dots&0 &0 & \dots &&0 &-\kappa & 2\kappa + \gamma & - \kappa\\             
0&0& \dots & 0& \varsigma_n& 0& \dots &&0&-\kappa & \kappa + \gamma\\
\hline
-1&0&\dots &0&0& 0 &&& \dots & &0\\             
0&-1&\dots &0&0& &&&  &&\\             
\vdots&&\ddots &\ddots&&\vdots& && \ddots &&\vdots \\             
&&\ddots &-1&0& &&& &&\\             
0&&\dots &0&-1& 0 &&& \dots &&0\\             
\end{array}
\right),  
\end{align*}
and $L_t = (L^1_t, 0, \dots, L^n_t, 0, \dots, 0)^*$. 
Here $L^1, L^n$ are one dimensional independent L\'evy processes 
satisfying Hypothesis \ref{hyp:moments} for some $p>0$. 
By Section 4.1 in \cite{RAQUEPAS} $\qQ$ satisfies Hypothesis \ref{hyp:statibility}. 
Consequently by Theorem \ref{th:linearwhite} the system exhibits window cutoff thermalization 
for any initial condition $x\neq 0$. The presence of a thermalization profile depends highly on the choice 
of the parameters $\kappa$, $\varsigma_1$, $\varsigma_2$, $\gamma$. 
\hfill

\subsubsection{\textbf{Numerical example of a linear chain of oscillators}}
In the sequel, we set 
$\varsigma_1=\varsigma_n=\kappa=1$, $\gamma=0.01$ and $n=5$. 
The following computations are carried out in Wolfram Mathematica 12.1.
The interaction matrix $\qQ$ is given by 
\[\left[ \begin {array}{cccccccccc} 1&0&0&0&0&1.01&-1&0&0&0\\ \noalign{\medskip}0&0&0&0&0&-1&2.01&-1&0&0\\ \noalign{\medskip}0&0&0&0&0&0&-1&2.01&-1&0\\ \noalign{\medskip}0&0&0&0&0&0&0&-1&2.01&-1\\ \noalign{\medskip}0&0&0&0&1&0&0&0&-1&1.01\\ \noalign{\medskip}-1&0&0&0&0&0&0&0&0&0\\ \noalign{\medskip}0&-1&0&0&0&0&0&0&0&0\\ \noalign{\medskip}0&0&-1&0&0&0&0&0&0&0\\ \noalign{\medskip}0&0&0&-1&0&0&0&0&0&0\\ \noalign{\medskip}0&0&0&0&-1&0&0&0&0&0\end {array} \right] \]
with the following eigenvalues
\[
\left[
\begin{array}{l}
\la_1\\
\bar \la_1\\
\la_2\\
\bar \la_2\\
\la_3\\
\bar \la_3\\
\la_4\\
\bar \la_4\\
\la_5\\
\la_6
\end{array}
\right]
=
\left[
\begin{array}{l}
0.0263377 + 1.88656\cdot i\\ 0.0263377 - 1.88656\cdot i\\
0.104782 + 1.55549\cdot i\\
0.104782 - 1.55549\cdot i\\
0.234099 + 1.06262\cdot i\\ 
0.234099 - 1.06262\cdot i\\ 
0.395218 + 0.517319\cdot i\\ 
0.395218 - 0.517319\cdot i\\ 
0.452655 + 0.\cdot i\\ 
0.0264706 + 0.\cdot i
\end{array}
\right].
\]
Since we have $10$ complex eigenvalues, we obtain a base of $10$ eigenvectors $v_1,\bar v_1,v_2,$ $\bar v_2,v_3,\bar v_3,$ $v_4,\bar v_4, v_5,v_6$ where 
$v_1,v_2,v_3,v_4\in \CC^{10}\setminus\RR^{10}$  and $v_5,v_6\in \RR^{10}$, maintaining the natural ordering.
Hence for the initial condition $x$ we have the unique representation
\[
x=\sum\limits_{j=1}^{4}(c_j(x)v_j+\bar c_j(x)\bar v_j)+c_5(x)v_5+c_6(x)v_6,
\]
where $c_1(x),c_2(x),c_3(x),c_4(x)\in \CC$ and $c_5(x),c_6(x)\in \RR$.
We note that the minimum of real parts of the eigenvalues is taken by the eigenvalues
$\la_1,\bar \la_1$.
Let 
$\fq=\mathsf{Re}(\lambda_1)=0.0263377$ and 
$\theta=\arg(\lambda_1)=1.55684$.
Hence, for generic $x$ (not properly contained in any eigenspace) we have 
\begin{align*}
e^{\fq t}e^{-\qQ t}x &\approx 
e^{i\theta t}c_1(x) v_1+ \bar e^{i\theta t}
\bar c_1(x) \bar v_1+2e^{(\fq-0.0264706)t}\mathsf{Re}(c_6(x)v_6)\\
&=
2\mathsf{Re}(e^{i\theta t}c_1(x) v_1)+2e^{(\fq-0.0264706)t}\mathsf{Re}(c_6(x)v_6),
\end{align*}
where $c_6(x)$ is a constant depending on $x$ and  
\[
v_1=
\left[
\begin{array}{llllllllll}
 0.112319 - 0.0891416 \cdot i\\
 -0.448508 + 0.0287844 \cdot i\\
 0.579305 + 0. \cdot i\\
 -0.448508 + 0.0287844 \cdot i\\
 0.112319 - 0.0891416 \cdot i\\
 0.0464105 + 0.060184 \cdot i\\
 -0.0119363 - 0.237904 \cdot i\\
 -0.00428606 + 0.307009 \cdot i\\
 -0.0119363 - 0.237904 \cdot i\\
 0.0464105 + 0.060184 \cdot i
\end{array}
\right].
\]
The vector of $c_j=c_j(x)$ is given by  
\[
c(x)=(c_1,\bar c_1,c_2,\bar c_2,c_3,\bar c_3(x), c_4,\bar c_4, c_5,c_6)^*=
[v_1 | \bar v_1 | v_2 |\bar v_2 | v_3 | \bar v_3 | v_4 | \bar v_4 | v_5 | v_6]^{-1}x.
\]
For instance for $x=e_1=(1,0,\ldots,0)$ we obtain
\[
c(e_1)=
\left[
\begin{array}{llllllllll}
0.0800993 - 0.0495081\cdot i\\
0.0800993 + 0.0495081\cdot i\\
0.186213 - 0.0967681\cdot i\\
0.186213 + 0.0967681\cdot i\\
0.371378 - 0.158507\cdot i\\
0.371378 + 0.158507\cdot i\\
-0.0619613 + 0.787062\cdot i\\
-0.0619613 - 0.787062\cdot i\\
1.62062 + 5.543\cdot 10^{-16}\cdot i\\
1.2715 + 2.33866\cdot 10^{-16}\cdot i
\end{array}
\right].
\]
Hence
\[
c_1(e_1)v_1=\hat w+i\cdot \check w =\left[
\begin{array}{l}
0.00396485\\
0.00886695\\
0.0101247\\
0.0197063\\
0.0218997\\
0.0375945\\
0.0340029\\
-0.043929\\
0.12981\\
0.101846
\end{array}
\right]+i\cdot \left[
\begin{array}{l}
- 0.00793113\\
- 2.28066\cdot 10^{-9}\\
- 0.0169701\\
- 0.001468\\
- 0.0310825\\
- 0.00568992\\
0.0661107\\
- 0.0599756\\
- 0.0802337\\
- 0.0629493
\end{array}
\right]
\]
and consequently
\[
e^{\fq t}e^{-\qQ t}e_1\approx 
2\cos(\theta t)\hat w
-2\sin(\theta t)\check w.
\]
In the sequel, we check the existence of a thermalization profile for $p\gqq 1$.
A straightforward computations show that
 $|\hat w|=0.181073\neq 
0.140425=|\hat w|$ and $\<\hat w, \check{w}\>=-0.0130705\neq 0$. Hence Theorem \ref{th:normalgrowth} yields the absence of a thermalization profile.

Recall that
$e^{-\fq \ft_{\e}}=\e$. Hence 
\begin{align*}
\frac{e^{-\qQ  \ft_{\e}}x}{\e}&=e^{\fq \ft_{\e}}e^{-\qQ \ft_{\e}}\approx
(e^{i\theta  \ft_{\e}}c_1(x) v_1+ \bar e^{i\theta  \ft_{\e}}
\bar c_1(x) \bar v_1)+2e^{(\fq-0.0264706)   \ft_{\e}}\mathsf{Re}(c_6(x)v_6)\\
&=2\mathsf{Re}(e^{i\theta \ft_{\e}}c_1(x) v_1)+2\e^{\nicefrac{(0.0264706-\fq)}{\fq} }\mathsf{Re}(c_6(x)v_6)\\
&=2\mathsf{Re}(e^{i\theta \ft_{\e}}c_1(x) v_1)+2\e^{0.0001329 }\mathsf{Re}(c_6(x)v_6).
\end{align*}
The low order of the error is essentially due to the relative spectral gap $\nicefrac{(0.0264706-\fq)}{\fq}$.

\section{\textbf{Conceptual examples}}\label{s:conceptual}
In the sequel, we give mathematical examples 
illustrating typical features of linear systems. 
We start with a non-symmetric linear system with complex (conjugate) eigenvalues exhibiting always 
a thermalization profile. This is followed by an ad hoc example illustrating 
the sensitive dependence of a thermalization profile {on} the initial condition. 
Finally we provide an example of repeated eigenvalues, where a log-log correction 
in the thermalization time scale appears. 

\subsection{\textbf{Example: Leading complex eigenvalues do not exclude profile}}\label{ex:complejo}
Let
\[
\qQ:=\left( \begin{array}{cc} \lambda & \theta\\ -\theta & \lambda\end{array} \right) \qquad \textrm{ with }\quad \la>0\quad \textrm{ and }\quad \theta\not=0.
\]
The eigenvalues of $-\qQ$ are given by $-\la\pm i \theta$.
A straightforward computation yields
\[
e^{-\qQ t}=e^{-\la t}\left( \begin{array}{cc} \cos(\theta t) & -\sin(\theta t)\\ \sin(\theta t) & \cos(\theta t)\end{array} \right).
\]
Hence for any $z=(x,y)^*$ we have 
\[
|e^{\la t}e^{-\qQ t}z|=\left|\left( \begin{array}{cc} \cos(\theta t) & -\sin(\theta t)\\ \sin(\theta t) & \cos(\theta t)\end{array} \right)z\right|=|z|.
\]
As a consequence, Theorem \ref{th:profile} implies 
a thermalization profile for any initial value $z\neq 0$.

\subsection{\textbf{Example: The initial value strongly determines the cutoff}}\label{ex:suspension}

We consider an embedding of the linear oscillator (\ref{eq:linearoscillator}) in $\RR^3$. 
Assume the case of subcritical damping $\gamma^2<4\kappa$ for positive parameters $\gamma, \kappa, \la$, 
\[
\qQ:=\left( \begin{array}{cc}
\la & 0 \\
0 &  \qQ_1 \\ 
\end{array} \right) \qquad \mbox{ and }\quad \qQ_1 = \left(\begin{array}{cc}0 & -1\\ \kappa & \gamma  \end{array}\right).
\]
The matrix $\qQ_1$ is precisely the one for the linear oscillator (\ref{eq:Qlinearoscillator}) analyzed in Example~\ref{ex:osc}. A straightforward computation shows 
\[
e^{-\qQ t}=\left( \begin{array}{cc}
e^{-\la t} & 0 \\
0 &  e^{-\qQ_1 t}  \\ 
\end{array} \right).
\]
For any initial value $z=(z_1,0,0)$ with $z_1\neq 0$ we have $|e^{\la t}e^{-\qQ t}z|=|z_1|$ and therefore 
a thermalization profile is valid due to Theorem \ref{th:profile}. However, for any $z=(0,z_2,z_3)$ with $(z_2,z_3)\neq (0,0)$ we have $|e^{\frac{\gamma}{2}t}e^{-\qQ t}z|=|e^{-\qQ_1 t}(z_2,z_3)^*|$ which by the case of subcritical damping ($\gamma^2<4\kappa$) discussed in Subsection \ref{ex:osc} does not have a cutoff thermalization profile. Instead, by Theorem \ref{th:linearwhite} only window cutoff thermalization is valid. 
That is, the presence of a thermalization profile is sensitive with respect to the initial condition. 

In the sequel, we emphasize the presence of a threshold effect for the {existence} of a thermalization profile with respect to the parameters due to competing real parts of the eigenvalues. 
Let $z=(z_1,z_2,z_3)$ with $z_1\neq 0$, $(z_2, z_3) \neq (0, 0)$. 
If in addition $\frac{\gamma}{2}<\la$ we have
\[
\lim_{t \ra \infty} e^{\la t}|e^{-\qQ t}z| = |z_1|,
\]
which implies that $\om(z) \subset \{|u|= |z_1|\}$ and therefore by Theorem \ref{th:profile} a thermalization profile. However, if $\frac{\gamma}{2}\gqq\la$ we have
\[
\lim_{t\to \infty}e^{\frac{\gamma}{2}t}|e^{-\qQ t}z|=\lim_{t\ra\infty} e^{\frac{\gamma}{2} t} e^{-\la t}  |z_1|+ \lim_{t\to \infty}e^{\frac{\gamma}{2}t}|e^{-\qQ_1 t}(z_2,z_3)^*|,
\]
which is not constant, as discussed in Subsection \ref{ex:osc}, and has only window thermalization (by 
Theorem \ref{th:linearwhite}), but no profile due the negative result in Lemma \ref{lem: subcritical oscillator}. 

\subsection{\textbf{Example: Multiplicities in the Jordan block yield logarithmic corrections}}\label{ex:multiplicity}

Let $\qQ$ be a $d$-squared matrix with all its eigenvalues equal to $\la>0$.
Theorem 7.10 p.209 in \cite{APOSTOL} yields
\[
e^{-\qQ t}=e^{-\la t}\sum_{j=0}^{d-1}\frac{t^k}{k!}(-\qQ+\la I_d)^k.
\]
For $z\in \RR^d$, $z\neq 0$ let 
\[
l(z)=\max\{k\in\{0,\ldots,d-1\}: (-\qQ+\la I_d)^{k}z\neq 0\}.
\]
Then 
\begin{align*}
e^{\la t}e^{-\qQ t}z&=\sum_{k=0}^{d-1}\frac{t^k}{k!}(-\qQ+\la I_d)^kz=\sum_{k=0}^{l(z)}\frac{t^k}{k!}(-\qQ+\la I_d)^k z.
\end{align*}
On the one hand,
if $l(z)=0$ we have $e^{\la t}e^{-\qQ t}z=z$.
On the other hand, if $l(z)\gqq 1$ we obtain
\begin{align}\label{eq:tasamult}
\frac{e^{\la t}}{t^{l(z)}}e^{-\qQ t}z=\sum_{k=0}^{l(z)-1}\frac{t^{k-l(z)}}{k!}(-\qQ+\la I_d)^k z+ \frac{1}{l(z)!}(-\qQ+\la I_d)^{l(z)} z.
\end{align}
Hence
\[
\lim\limits_{t\to \infty}
\frac{e^{\la t}}{t^{l(z)}}e^{-\qQ t}z= \frac{1}{l(z)!}(-\qQ+\la I_d)^{l(z)} z\neq 0,
\]
and in this case $\ell(z)=l(z)+1 \gqq 2$, where $\ell(z)$ is the constant given in Lemma \ref{jara}. 
Due to Theorem \ref{th:profile} there is always a thermalization profile. 
However, if $\ell(z)\gqq 2$, the log-log correction in (\ref{eq:thermalization profile}) appears.  
Note that a log-log correction and the presence of a thermalization profile are independent properties. 
It is not complicated to construct an example with no thermalization profile and log-log correction. 

\section{\textbf{Extensions and applications}} \label{s:ext}

{This section contains the cutoff phenomenon for the relative entropy in the Brownian case, for the Wasserstein distance with stationary red noises and comments about the computational observation of the cutoff phenomenon. }

\subsection{\textbf{Cutoff thermalization in the relative entropy}}\label{ss:ext1}

{In this subsection we discuss the asymptotics in the explicit formula  (\ref{eq:explicitentropy}) of the relative entropy for general exponentially asymptotically stable $-\qQ$. }

The strongest notion of thermalization of interest is given in terms of the Kullback-Leibler divergence also called relative entropy.
For pure Brownian perturbations the marginals of 
\[
\ud X^{\e}_t(x)=-\qQ X^{\e}_t(x)\ud t+\e \sigma\ud B_t,\qquad X^\e_0=x,
\]
{for some $\si \in \RR^{d\times d}$} are known to be 
\[
X^{\e}_t(x)\stackrel{d}=N(e^{-\qQ t}x,\e^2 \Sigma_t),
\]
where 
\[
\Sigma_t=e^{-\qQ t}\Big(\int_{0}^{t}e^{\qQ s}
\sigma \sigma^* e^{\qQ^* s}\ud s \Big)e^{-\qQ^* t}
\] 
is a symmetric and non-negative definite square matrix, see
Proposition 3.5 in \cite{Pavli}.
Since $\qQ$ satisfies Hypothesis \ref{hyp:statibility}, we have 
$e^{-\qQ t}x\to 0$ and $\e^2\Sigma_t\to \e^2\Sigma_\infty$ as $t\to \infty$ which implies the existence of a unique limiting distribution $\mu^\e=N(0,\e^2\Sigma_\infty)$.
A priori, $\Sigma_t$ and $\Sigma_\infty$ may degenerate, however, Theorem \ref{th:linearwhite} applies for $p=\infty$ and Theorem \ref{th:profileabstract} is valid under condition (\ref{eq:jaka}).
If additionally we assume that $-\qQ$ and $\sigma$ are controllable, i.e.
$\textrm{Rank}[-\qQ,\sigma]=d$,
the matrices $\Sigma_t$  and  $\Sigma_\infty$ turn out to be non-singular{. Moreover, $\Sigma_\infty$ is the unique symmetric positive definite solution of the Lyapunov matrix equation
$
\qQ\Sigma_\infty+\Sigma_\infty \qQ^* =\sigma \sigma^*$.}
The relative entropy is given explicitly by formula (\ref{eq:explicitentropy}),
which we rewrite as
\[
H(X^\e_t(x)|\mu^\e)-\frac{1}{2\e^2}(e^{-\qQ t}x)^*\Sigma^{-1}_\infty e^{-\qQ t}x=\frac{1}{2}\left(
\mathrm{Tr}(\Sigma^{-1}_\infty \Sigma_t)-d+\ln\Big(\frac{\mathrm{det}(\Sigma_\infty)}{\mathrm{det}(\Sigma_t)}\Big)\right).
\]
For any $t_\e\to \infty$ as $\e \to 0$ we have that the error term in the right-hand side of the preceding equality tends to zero as $\e \to 0$. In the sequel, we analyze the asymptotic quadratic form 
\[
\frac{1}{2\e^2}(e^{-\qQ t}x)^*\Sigma^{-1}_\infty \big(e^{-\qQ t}x\big)=\frac{1}{2}\Big|\Sigma^{-1/2}_\infty \frac{e^{-\qQ t}x}{\e}\Big|^2.
\]
By Lemma \ref{jara} it has the spectral decomposition
\begin{equation}\label{eq:2newhartmandecomposition}
\lim_{t \to \infty} \left |\frac{e^{\fq t}}{t^{\ell-1}} e^{- \qQ t}x - \sum_{k=1}^{m} e^{i  t\theta_k} v_k\right | =0.
\end{equation}
For the scale $\ft^x_\e$ and $w_\e$ given in Theorem \ref{th:profile} we have 
\begin{equation}\label{eq:2newscaling}
\lim\limits_{\e \to 0} 
\frac{(\ft^x_\e+r\cdot w_\e)^{\ell-1} e^{-\fq (\ft^x_\e+r\cdot w_\e)}}{\e}=\fq^{1-\ell}e^{-\fq w r}.
\end{equation}
{Lemma \ref{lem:precisehartmandecomp} implies 
\begin{align*}
&\left|
\Big|\Sigma^{-1/2}_\infty \frac{e^{-\qQ (\ft_\e^x+r\cdot w_\e)}x}{\e}\Big|-\Big|\Sigma^{-1/2}_\infty 
\frac{(\ft^x_\e+r\cdot w_\e)^{\ell-1}}{\e e^{\fq (\ft^x_\e+r\cdot w_\e)}} \sum_{k=1}^{m} e^{i  (\ft^x_\e+r\cdot w_\e)\theta_k} v_k\Big|\right|\\
&\lqq |\Sigma^{-1/2}_\infty|\frac{(\ft^x_\e+r\cdot w_\e)^{\ell-1}}{\e e^{\fq (\ft^x_\e+r\cdot w_\e)}}
\left| \Big(e^{\fq (\ft^x_\e+r\cdot w_\e)}\frac{e^{-\qQ (\ft_\e^x+r\cdot w_\e)}x}{(\ft^x_\e+r\cdot w_\e)^{\ell-1}}-
 \sum_{k=1}^{m} e^{i  (\ft^x_\e+r\cdot w_\e)\theta_k} v_k\Big)\right|.
\end{align*}
Combining the preceding inequality }
with (\ref{eq:2newhartmandecomposition}) and (\ref{eq:2newscaling}), yields
\begin{align}\label{eq:superior}
\limsup_{\e\to 0}\Big|\Sigma^{-1/2}_\infty \frac{e^{-\qQ (\ft_\e^x+r\cdot w_\e)}x}{\e}\Big|
&=\fq^{1-\ell}e^{-\fq w r}\limsup_{\e \to 0}
\Big|\Sigma^{-1/2}_\infty 
\sum_{k=1}^{m} e^{i  (\ft^x_\e+r\cdot w_\e)\theta_k} v_k\Big|\\
&\lqq \fq^{1-\ell}e^{-\fq w r}
\Big|\Sigma^{-1/2}_\infty \Big|
\sum_{k=1}^{m} |v_k|\nonumber
\end{align}
and 
\begin{align}\label{eq:inferior}
\liminf_{\e\to 0}\Big|\Sigma^{-1/2}_\infty \frac{e^{-\qQ (\ft_\e^x+r\cdot w_\e)}x}{\e}\Big|
&=\fq^{1-\ell}e^{-\fq w r}\liminf_{\e \to 0}
\Big|\Sigma^{-1/2}_\infty 
 \sum_{k=1}^{m} e^{i  (\ft^x_\e+r\cdot w_\e)\theta_k} v_k\Big|\\
 &\gqq \fq^{1-\ell}e^{-\fq w r}
 |\Sigma^{1/2}_\infty|^{-1}
\liminf_{\e \to 0}
\Big| 
 \sum_{k=1}^{m} e^{i  (\ft^x_\e+r\cdot w_\e)\theta_k} v_k\Big|\nonumber\\
  &\gqq \fq^{1-\ell}e^{-\fq w r}
 |\Sigma^{1/2}_\infty|^{-1}
\liminf_{t \to \infty}
\Big| 
 \sum_{k=1}^{m} e^{i t\theta_k} v_k\Big|.\nonumber
\end{align}
{Hence the analogue of Theorem \ref{th:linearwhite} is valid for the relative entropy, that is,   
\[
\lim_{r\to \infty}
\limsup_{\e\to 0}\Big|\Sigma^{-1/2}_\infty \frac{e^{-\qQ (\ft_\e^x+r\cdot w_\e)}x}{\e}\Big|=0
\]
and  by (\ref{eq:belowabove}) in Lemma \ref{jara} 
\[
\lim_{r\to -\infty}
\liminf_{\e\to 0}\Big|\Sigma^{-1/2}_\infty \frac{e^{-\qQ (\ft_\e^x+r\cdot w_\e)}x}{\e}\Big|=\infty.
\]
}
Moreover, we have the analogue of Theorem \ref{th:profile} with the following modification. 
By (\ref{eq:superior}) and (\ref{eq:inferior})
the existence of a cutoff thermalization profile holds if and only if the geometric condition of $|\Sigma^{-1/2}_\infty\omega(x)|$
being contained in a sphere is satisfied, 
where $\omega(x)$ is given in (\ref{eq:omegalimit}). 
{Recall that the normal growth condition (\ref{e:non-normal growth}) in Theorem \ref{th:normalgrowth} under the non-resonance hypothesis (\ref{eq:rationallyindependent1}) in Remark \ref{rem:rationallyindependent1} 
is given by 
\[
({v_1},\hat v_2,\check v_2,\ldots,\hat v_{2n},\check v_{2n}) \quad \textrm{ being orthogonal and satisfying }\quad
 |\hat v_{2k}|=|\check v_{2k}|\quad \textrm{ for } \quad k=1,\ldots,n.
\]
This characterization of the thermalization profile changes for the relative entropy to the following \textit{weighted normal growth condition}:  
\begin{equation}\label{eq:weightednonnormal} 
\begin{split}
&(\Sigma^{-1/2}_\infty{v_1},\Sigma^{-1/2}_\infty\hat v_2,\Sigma^{-1/2}_\infty\check v_2,\ldots,\Sigma^{-1/2}_\infty\hat v_{2n},\Sigma^{-1/2}_\infty\check v_{2n}) \quad \textrm{ is orthogonal and satisfies }\\
& |\Sigma^{-1/2}_\infty\hat v_{2k}|=|\Sigma^{-1/2}_\infty\check v_{2k}|\quad \textrm{ for } \quad k=1,\ldots,n.
\end{split}
\end{equation}
In case of \eqref{eq:weightednonnormal} the thermalization profile is given by
\begin{equation}\label{eq:weightednonnormalprofile}
\ti \pP_x(r)=\fq^{1-\ell}e^{-\fq w r}|\Sigma^{-1/2}_\infty u|,
\end{equation}
where $u$ is any representative of  $\omega(x)$. To our knowledge, this result is original and not known in the literature due to the lack of Lemma \ref{jara}. 
}

\subsection{\textbf{Cutoff thermalization for small red and more general ergodic noises}}\label{ss:ext2}
In this subsection we show that our results remain intact if we replace the driving L\'evy noise by red noise or more general ergodic noises. 

In the sequel, we consider the generalized Ornstein-Uhlenbeck process $(X^\e_t(x))_{t\gqq 0}$ 
\begin{align}\label{eq:ouds}
\ud X^\e_t=-\qQ X^\e_t \ud t+ \e \ud U_t, \qquad X^\e_0=x,
\end{align}
where the matrix
$\qQ$ satisfies Hypothesis \ref{hyp:statibility}.
Equation \eqref{eq:ouds}
is
driven by (i) a stationary multidimensional Ornstein-Uhlenbeck process 
 $(U_t)_{t\gqq 0}$  given by
\begin{equation}\label{eq:subordinated}
\ud U_t=-\Lambda U_t \ud t+ \ud L_t, \qquad U_0\stackrel{d}{=}\ti\mu,
\end{equation}
where the matrix
$\Lambda$ satisfies Hypothesis \ref{hyp:statibility}, $L=(L_t)_{t\gqq 0}$ fulfills Hypothesis \ref{hyp:moments} for some $p>0$ and $U_0$ is independent of $L$.
We point out that $U_t\stackrel{d}{=}\tilde{\mu}$ for all $t\gqq 0$.
We stress that with more technical effort the subordinated linear process  $(U_t)_{t\gqq 0}$ can be replaced by virtually any ergodic (Feller-) Markov process which is  sufficiently integrable.
For illustration of the ideas we focus on the stationary driving noise given by (\ref{eq:subordinated}).
By the variation of constant formula we have
\[
X^\e_t=e^{-\qQ t}x+\e e^{-\qQ t}\int_{0}^{t}e^{\qQ s}\ud U_s=:e^{-\qQ t}x+\e \uU_t.
\]
Note that
\begin{align*}
\ud 
\left(
\begin{array}{c}
X^\e_t\\
U_t
\end{array}
\right)=
\Gamma_\e \left(
\begin{array}{c}
X^\e_t\\
U_t
\end{array}
\right) \ud t+\left(
\begin{array}{c}
\e \\
1
\end{array}
\right)\ud L_t.
\end{align*}
where 
\begin{equation}
\Gamma_\e:=\left(\begin{matrix}
-\qQ & -\e\Lambda\\
0    &  -\Lambda
\end{matrix}\right).
\end{equation}
Since $\qQ$ and $\Lambda$ satisfy Hypothesis \ref{hyp:statibility}
and $\Gamma_\e$ is an upper block matrix, we have that $\Gamma_\e$ also satisfies Hypothesis \ref{hyp:statibility}.
In particular, the vector process
$((X^\e_t, U_t))_{t\gqq 0}$ is an Ornstein-Uhlenbeck process and hence Markovian. 
As a consequence, Theorem 4.1 in \cite{SaYa} yields the existence and uniqueness of an
 invariant and limiting  distribution (for the weak convergence) $(X^\e_\infty,U_0)$ 
of $((X^\e_t, U_t))_{t\gqq 0}$.
Hence
$X^{\e}_\infty\stackrel{d}{=}\e \uU_\infty$.
We continue with the estimate of $\Wp(X^\e_t,X^{\e}_\infty)$.
For any $1\lqq p'\lqq p$ the analogous computations used in (\ref{eq:coupling}) yield
\begin{align*}
|\frac{\Wp(X^\e_t,X^{\e}_\infty)}{\e}- \frac{|e^{-\qQ t}x|}{\e}|\lqq \Wp(\uU_t,\uU_\infty).
\end{align*}
Hence cutoff thermalization occurs whenever $\Wp(\uU_t,\uU_\infty)\to 0$ as $t\to \infty$.
Properties a), b) and d) of Lemma \ref{lem:invariance} imply
\begin{align}\label{eq:ut}
\Wp( \uU_t, \uU_\infty)&=
\Wp(X^{1}_t-e^{-\qQ t}x,X^1_\infty)\nonumber\\
&\lqq \Wp(X^1_t-e^{-\qQ t}x,X^1_\infty-e^{-\qQ t}x)+\Wp(X^1_\infty-e^{-\qQ t}x,X^1_\infty)
\nonumber\\
&=
\Wp(X^1_t,X^1_\infty)+|e^{-\qQ t}x|.
\end{align} 
We point out that the vector process $(X^1_t,U_t)_{t\gqq 0}$ is a Markov process. 
Since in general projections of Markovian processes are not Markovian, we study the process $X_t$ in more detail.
Due to the triangle structure we have the dependences $X^1_t(x,U_0)$ and $U_t(U_0)$. In case of initial data $(x,u)$ instead of 
$(x,U_0)$ in \eqref{eq:ouds} and 
\eqref{eq:subordinated}  we write (for $\e=1)$ $X^1_t(x,u)$
and $U_t(u)$.
Analogously to the total variation distance, Theorem~5.2 in \cite{devro}
the Wasserstein exhibits a contraction property which for completeness is shown here.
By the contraction property g) in Lemma \ref{lem:invariance} for the projection $T(x,u)=x$ 
 we have
\begin{equation}\label{e:ineq27}
\Wp(X^1_t(x,U_0),X^1_\infty)\lqq 
\Wp((X^1_t(x,U_0),U_t(U_0)),(X^1_\infty,U_t(U_0)))
\end{equation}
for any $1\lqq p'\lqq p$.
We note that
\begin{equation}
\Wp((X^1_t(x,U_0),U_t(U_0)),(X^1_\infty,U_t(U_0)))=\Wp((X^1_t(x,U_0),U_t(U_0)),(X^1_\infty,U_0)).
\end{equation}
Lemma \ref{lem:erg5} applied to the vector-valued Ornstein-Uhlenbeck process
$((X^1_t(x,U_0),U_t(U_0)))_{t\gqq 0}$
instead of 
$(X^\e_t(x))_{t\gqq 0}$ where $\Gamma_1$ replaces $\qQ$ yields the limit
\[
\lim\limits_{t\to \infty}
\Wp((X^1_t(x,U_0),U_t(U_0)),(X^1_\infty,U_0))=0.
\]
The preceding limit with the help of \eqref{e:ineq27} implies 
\[
\lim\limits_{t\to \infty}
\Wp(X^1_t(x,U_0),X^1_\infty)=0.
\]
Therefore the cutoff thermalization behavior of the
Ornstein-Uhlenbeck driven system (\ref{eq:ouds}) is the same as the white noise driven system (\ref{eq:linear}) given in Theorem \ref{th:profile} and Theorem \ref{th:linearwhite}. This is not surprising since the shift-linearity property of the Wasserstein distance for $p\gqq 1$ cancels out the specific invariant distribution.

\subsection{\textbf{Conditions on $\e$ for the observation of the cutoff on a fixed interval $[0, T]$}}\label{ss:ext3}

This subsection provides bounds on the size of $\e$ in order to observe cutoff on a fixed (large) interval $[0, T]$. Similar observations 
have been made in Section IV of \cite{BOK10} in order illustrate the optimal tuning of the parameter $\e$. 

Our main results contain the time scale $\ft_\e\to \infty$ as $\e\to 0$ at which thermalization occurs. However, the computational resources can only cover up to finite time horizon $T>0$. In the sequel, we line out estimates on the smallest of $\e$ in order to  observe the cutoff thermalization  before time $\nicefrac{T}{2}\in [0,T]$,
that is, $T\gqq 2\ft^x_\e$ for $\e \ll 1$. 
In other words, we have the lower bound 
\begin{equation}\label{eq:lowerbound2020}
\e\gqq e^{-(\nicefrac{\fq T}{2})}.
\end{equation}
Since our results are asymptotic, it is required that 
$\e<\e_0$, where $\e_0$ typically depends of 
$\qQ,x$ and $\EE[|\oO_\infty|]$.

Given an error $\eta>0$. 
In the light of estimates (\ref{eq:coupling}) ($p\gqq 1$) and
(\ref{eq:couplingp}) ($p\in (0,1)$) we carry out the following error analysis. 
In the sequel, we always consider a generic initial condition $x$.
Formula (\ref{eq:constanteslimite}) in item (4) of Remark \ref{rem:explanationformula} 
yields the following upper bound of the error 
\begin{align}\label{eq:eta1}
C_0 \EE[|\oO_\infty|]\e\lqq \nicefrac{\eta}{2},
\end{align}
where the constant $C_0$ is given in terms of the spectral gap in (\ref{eq:initialgrowth}) and an upper bound
of $\EE[|\oO_\infty|]$ is expressed in terms of the noise parameters in the estimate (\ref{eq:invariantmoment}). 
By Remark \ref{rem:windows} item (2) we have an error of order
\begin{equation}\label{eq:eta2}
K(x)\e^{\mathfrak{g}/\fq}\lqq \nicefrac{\eta}{2},
\end{equation}
where the spectral gap $\mathfrak{g}$ is given in (\ref{eq:ordendeconvergencia}) and the constant $K(x)$ is estimated in (\ref{eq:krepresentation}).
Combining 
(\ref{eq:lowerbound2020})
(\ref{eq:eta1}) and (\ref{eq:eta2}) and solving for $\e$ yields
\begin{equation}\label{eq:ebound}
e^{-(\nicefrac{\fq T}{2})}\lqq 
\e\lqq \min\left\{\frac{\eta}{2C_0 \EE[|\oO_\infty|]},
\left(\frac{\eta}{2K(x)}\right)^{{\mathfrak{q}/\mathfrak{g}}}\right\}.
\end{equation}

\appendix
{
\section{Proof of Lemma \ref{lem:invariance} (Properties of the Wasserstein distance)}\label{ap:A}
{
\begin{proof}[Proof of Lemma \ref{lem:invariance}]
Property a) is shown for $p\gqq 1$ in \cite{Vi09} p. 94. The proof for $p\in (0, 1)$ follows by the same reasoning with the help of the subadditivity of the map $\RR_+\ni r \ra r^p\in \RR_+$. 
Item b) is straightforward for any $p>0$ due to the translation invariance in formula (\ref{def:wasserstein}). 
The homogeneity property of item c) follows directly from (\ref{def:wasserstein}) for any $p>0$. \\
In the sequel we show item d). Since we are not aware of a proof in the literature the statement is shown here. Synchronous replica $(U_1,U_1)$ with joint law
$\Pi(\ud u, \ud u)$ (natural coupling) yields the upper bound for any $p>0$ as follows
\begin{align}\label{eq:cotasuperior1}
\W(u_1+U_1,U_1)\lqq \left(\int_{\RR^d \times \RR^d} |u_1+u-u|^p\Pi(\ud u,\ud u)\right)^{\min\{1,1/p\}}=|u_1|^{\min\{1,p\}}.
\end{align}
We continue with the lower bound for $p\gqq 1$. Let $\pi$ any coupling (joint law) between $u_1+U_1$ and $U_1$.
Note that
\[
\int_{\RR^d \times \RR^d}(u-v)\pi(\ud u,\ud v)=\int_{\RR^d \times \RR^d}u\pi(\ud u,\ud v)-
\int_{\RR^d \times \RR^d}v\pi(\ud u,\ud v)
=\EE[u_1+U_1]-\EE[U_1]=u_1.
\]
Then the triangular inequality yields 
\[
|u_1|=\Big|\int_{\RR^d \times \RR^d}(u-v)\pi(\ud u,\ud v)\Big|\lqq 
\int_{\RR^d \times \RR^d}|u-v|\pi(\ud u,\ud v).
\]
Minimizing over all possible coupling  between $u_1+U_1$ and $U_1$ we obtain
\begin{equation}\label{eq:cotainferior1}
|u_1|\lqq \mathcal{W}_1(u_1+U_1,U_1).
\end{equation}
For $p\gqq 1$, Jensen's inequality with the help of (\ref{eq:cotasuperior1}) and (\ref{eq:cotainferior1})
yields 
\[ 
|u_1|\lqq \mathcal{W}_1(u_1+U_1,U_1)\lqq \W(u_1+U_1,U_1)\lqq |u_1|,
\]
such that $\W(u_1+U_1,U_1)=|u_1|$.\\
For $p\in(0,1)$, the triangle inequality  and the translation invariance b) imply
 \begin{align*}
|x|^p=\W(x,0)&\lqq \W(x,x+U)+\W(x+U,U)+\W(U,0)\\
&=\W(x+U,U)+2\EE[|U|^p],
\end{align*}
and hence
\begin{equation}\label{eq:desigualdadinferior}
\W(x+U,U)\gqq |x|^p-2\EE[|U|^p].
\end{equation}
Combining (\ref{eq:cotasuperior1}) and (\ref{eq:desigualdadinferior}) we obtain (\ref{ec:cotaabajop01}). This finishes the proof of item d). \\
Property e) is straightforward. The characterization in item f) is proven Theorem~6.9 in \cite{Vi09} for $p\gqq 1$. For $p\in (0,1)$ we refer to Remark~1.4 in \cite{FJR20}. \\
For completeness we give a proof of item g). We apply the Kantorovich duality (Theorem~5.10 p. 57-58 in \cite{Vi09}) for the cost function $c(x,y)=|x-y|^p$ for any $x,y\in \RR^k$ and some $p>0$.
Let $\widetilde{X}=T(X)$ and $\widetilde{Y}=T(Y)$.
By item iii) of Theorem~5.10 in
\cite{Vi09} we have
\begin{equation*}
\begin{split}
\W(\widetilde{X},\widetilde{Y})=\max\limits_{(\psi,\phi)}\bigg(\mathbb{E}[\phi(\widetilde{Y})-\psi(\widetilde{X})]\bigg)^{\min\{1, \frac{1}{p}\}},
\end{split}
\end{equation*}
where the maximum is running over all integrable functions $\psi$ and $\phi$ such that  
\begin{equation}\label{eq: cost-inequality}\phi(x)-\psi(y)\lqq |x-y|^p\end{equation} for all $x,y\in \RR^k$. 
In addition, item iii) of Theorem~5.10 in
\cite{Vi09} states the existence of a respective maximizer $(\phi_*, \psi_*)$. 
The preceding equality yields
\begin{equation}\label{eq:kanto1}
\begin{split}
\W(\widetilde{X},\widetilde{Y})
&=\bigg(\mathbb{E}[\phi_*(\widetilde{Y})-\psi_*(\widetilde{X})]\bigg)^{\min\{1, \frac{1}{p}\}}
=\bigg(\mathbb{E}[\phi_*(T(Y))-\psi_*(T(X))]\bigg)^{\min\{1, \frac{1}{p}\}}\\
&=\bigg(\mathbb{E}[\phi_*\circ T(Y)-\psi_*\circ T(X)]\bigg)^{\min\{1, \frac{1}{p}\}}.
\end{split}
\end{equation}
Using (\ref{eq: cost-inequality}) and the fact that $T$ is Lipschitz continuous with Lipschitz constant $1$, we have for any $u,v\in \RR^d$ 
\[
\phi_*\circ T(u)-\psi_*\circ T(v)=\phi_*(T(u))-\psi_*(T(v))\lqq |T(u)-T(v)|^p\lqq |u-v|^p
\]
and hence
\begin{equation}\label{eq:kanto2}
\W({X},{Y})\gqq \bigg(\mathbb{E}[\phi_*\circ T(Y)-\psi_*\circ T(X)]\bigg)^{\min\{1, \frac{1}{p}\}}.
\end{equation}
The statement of item g) is a direct consequence of \eqref{eq:kanto1} and \eqref{eq:kanto2}.
\end{proof} 
}

\begin{proof}[Proof of Lemma   
 \ref{lem:totalvariation}]
Let $p'\in (0,p]$. Given $U_1,U_2$ let
 $\pi$ be some joint distribution of $(U_1,U_2)$.
By definition
\begin{align*}
\Wp(U_1,U_2)&=\inf_{\pi}\int_{\RR^d\times \RR^d} |u_1-u_2|^{p'} \pi(\ud u_1,\ud u_2)\\
&=\inf_{\pi}\int_{\RR^d\times \RR^d} |u_1-u_2|^{p'} \ind\{u_1\neq u_2\}\pi(\ud u_1,\ud u_2)\\
&\lqq \int_{\RR^d\times \RR^d} |u_1-u_2|^{p'} \ind\{u_1\neq u_2\}\pi(\ud u_1,\ud u_2).
\end{align*}
Note that for any $u_1,u_2\in \RR^d$
\begin{align*}
|u_1-u_2|^{p'}\lqq \ind\{|u_1-u_2|\lqq 1\}+|u_1-u_2|^{p}1(|u_1-u_2|>1)\lqq 1+|u_1-u_2|^{p}
\end{align*}
and 
$|u_1-u_2|^{p'} \ind\{u_1\neq u_2\}\to \ind\{u_1\neq u_2\}$ as $p'\to 0$ for any $u_1,u_2\in \RR^d$.
By the dominated convergence theorem we obtain
\begin{align*}
\lim\limits_{p'\to 0} \Wp(U_1,U_2)
&\lqq \lim\limits_{p'\to 0}
\int_{\RR^d \times \RR^d}  
|u_1-u_2|^{p'} 
\ind\{u_1\neq u_2\}\pi(\ud u_1,\ud u_2)\\
&= \int_{\RR^d \times \RR^d}  \ind\{u_1\neq u_2\}\pi(\ud u_1,\ud u_2).
\end{align*}
Minimizing over all joint distributions $\pi$ of $U_1,U_2$ we obtain
\[
\lim\limits_{p'\to 0} \Wp(U_1,U_2)\lqq d_{\mathrm{TV}}(U_1,U_2).
\]
Moreover, the dominated convergence theorem also yields the lower bound
\begin{align*}
 \int_{\RR^d \times \RR^d}  \ind\{u_1\neq u_2\}\pi(\ud u_1,\ud u_2)
 &=
\lim\limits_{p'\to 0}
\int_{\RR^d \times \RR^d}  
|u_1-u_2|^{p'} 
\ind\{u_1\neq u_2\}\pi(\ud u_1,\ud u_2)\\
&\gqq 
\lim\limits_{p'\to 0}
\Wp(U_1,U_2).
\end{align*}
Minimizing $\pi$ as above we deduce
\[
d_{\mathrm{TV}}(U_1,U_2)\gqq 
\lim\limits_{p'\to 0}
\Wp(U_1,U_2)
\]
and consequently 
\[
d_{\mathrm{TV}}(U_1,U_2)
=\lim\limits_{p'\to 0}
\Wp(U_1,U_2).
\]
\end{proof}

\section{Proof of Theorem \ref{th:profile}
and Theorem \ref{th:profileabstract} (Cutoff thermalization profile).
}\label{ap:B}
The following proposition presents the core arguments of the subsequent proofs of 
Theorem \ref{th:profile} and Theorem \ref{th:profileabstract}.
\begin{prop}\label{prop:replacement}
For any $0<p'\lqq p$ it follows
\begin{align}\label{eq:limitesuperiorformula}
\limsup_{\e\to 0}\frac{\Wp(X_{\ft^x_\e+r\cdot w_\e}^\e(x), \mu^\e)}{\e^{\min\{p',1\}}}=\limsup_{\e\to 0}\Wp\big(
\frac{(\ft^x_\e+r\cdot w_\e)^{\ell-1}}{\e e^{\fq (\ft^x_\e+r\cdot w_\e)}} \sum_{k=1}^{m} e^{i  (\ft^x_\e+r\cdot w_\e)\theta_k} v_k
 +  \oO_\infty,  \oO_\infty\big)
\end{align}
and
\begin{align}
\label{eq:limiteinferiorformula}
\liminf_{\e\to 0}\frac{\Wp(X_{\ft^x_\e+r\cdot w_\e}^\e(x), \mu^\e)}{\e^{\min\{p',1\}}}=\liminf_{\e\to 0}\Wp\big(
\frac{(\ft^x_\e+r\cdot w_\e)^{\ell-1}}{\e e^{\fq (\ft^x_\e+r\cdot w_\e)}} \sum_{k=1}^{m} e^{i  (\ft^x_\e+r\cdot w_\e)\theta_k} v_k
 +  \oO_\infty,  \oO_\infty\big).
\end{align}
In particular, the limit
\begin{align}\label{eq:distancia}
\lim\limits_{\e\to 0}\frac{\Wp(X_{\ft^x_\e+r\cdot w_\e}^\e(x), \mu^\e)}{\e^{\min\{p',1\}}}\qquad \textrm{ exists}
\end{align}
if and only if
the limit
\begin{align}\label{eq:equivalentedistancia}
\lim_{\e\to 0}\Wp\big(
\frac{(\ft^x_\e+r\cdot w_\e)^{\ell-1}}{\e e^{\fq (\ft^x_\e+r\cdot w_\e)}} \sum_{k=1}^{m} e^{i  (\ft^x_\e+r\cdot w_\e)\theta_k} v_k
 +  \oO_\infty,  \oO_\infty\big)\qquad \textrm{exists}.
\end{align}
\end{prop}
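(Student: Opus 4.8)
The plan is to bridge the left- and right-hand sides of (\ref{eq:limitesuperiorformula})--(\ref{eq:limiteinferiorformula}) through three successive approximations, each valid uniformly for $0<p'\le p$; throughout I write $t_\e := \ft^x_\e + r\cdot w_\e$ and note that $t_\e\to\infty$ as $\e\to 0$ since $w_\e\to w>0$. First I would remove the non-equilibrium error: combining the key estimate (\ref{eq:coupling}) with the shift linearity of Lemma \ref{lem:invariance} d) in the range $p'\ge1$, respectively the key estimate (\ref{eq:couplingp}) in the range $p'\in(0,1)$, yields in both cases the single two-sided bound
\[
\Big|\frac{\Wp(X^\e_{t_\e}(x),\mu^\e)}{\e^{\min\{p',1\}}}-\Wp\Big(\tfrac{e^{-\qQ t_\e}x}{\e}+\oO_\infty,\,\oO_\infty\Big)\Big|\le \Wp(\oO_{t_\e},\oO_\infty),
\]
and by (\ref{eq: OUthermalizationW}) the right-hand side is at most $(C_0e^{-q_* t_\e})^{\min\{1,p'\}}\EE[|\oO_\infty|^{\min\{1,p'\}}]$, which vanishes because $t_\e\to\infty$. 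Consequently $\limsup_{\e\to0}$ and $\liminf_{\e\to0}$ of the left-hand side of (\ref{eq:limitesuperiorformula}) coincide with those of $\Wp(e^{-\qQ t_\e}x/\e+\oO_\infty,\oO_\infty)$.

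Next I would insert the spectral asymptotics of Lemma \ref{jara}. Setting $\beta_\e := t_\e^{\ell-1}/(\e\,e^{\fq t_\e})$ and using $\ft^x_\e=\fq^{-1}|\ln\e|+\tfrac{\ell-1}{\fq}\ln|\ln\e|$, one checks the elementary identity $\e\,e^{\fq t_\e}=|\ln\e|^{\ell-1}e^{\fq w_\e r}$, whence $\beta_\e\to\fq^{1-\ell}e^{-\fq w r}$ and, in particular, $(\beta_\e)_\e$ is bounded. Lemma \ref{jara} then yields
\[
\frac{e^{-\qQ t_\e}x}{\e}-\beta_\e\sum_{k=1}^{m}e^{it_\e\theta_k}v_k=\beta_\e\Big(\frac{e^{\fq t_\e}}{t_\e^{\ell-1}}e^{-\qQ t_\e}x-\sum_{k=1}^{m}e^{it_\e\theta_k}v_k\Big)\longrightarrow 0 .
\]
Finally I would use the continuity of $z\mapsto\Wp(z+\oO_\infty,\oO_\infty)$: for $a,b\in\RR^d$, translation invariance (Lemma \ref{lem:invariance} b)) and the triangle inequality give $|\Wp(a+\oO_\infty,\oO_\infty)-\Wp(b+\oO_\infty,\oO_\infty)|\le\Wp((a-b)+\oO_\infty,\oO_\infty)$, which by shift linearity equals $|a-b|$ when $p'\ge1$ and is bounded by $|a-b|^{p'}$ when $p'\in(0,1)$ thanks to (\ref{ec:cotaabajop01}). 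Applying this with $a=e^{-\qQ t_\e}x/\e$ and $b=\beta_\e\sum_{k}e^{it_\e\theta_k}v_k$ and invoking the previous display, the $\limsup$ and $\liminf$ over $\e\to0$ of $\Wp(e^{-\qQ t_\e}x/\e+\oO_\infty,\oO_\infty)$ are unchanged if $a$ is replaced by $b$; the latter is exactly the right-hand side of (\ref{eq:limitesuperiorformula})--(\ref{eq:limiteinferiorformula}). Chaining the three steps proves those two identities, and (\ref{eq:distancia})$\,\Longleftrightarrow\,$(\ref{eq:equivalentedistancia}) is then immediate, since a real net converges precisely when its $\limsup$ and $\liminf$ coincide.

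The argument has no conceptually hard core; the main care I anticipate is in treating $p'\ge1$ and $p'\in(0,1)$ on the same footing — in the subcritical-moment range the shift linearity (\ref{eq:shitflinearity}) is no longer available and has to be replaced by the two-sided bound (\ref{ec:cotaabajop01}), which is what forces the exponent $\min\{p',1\}$ and leaves an $\EE[|\oO_\infty|^{p'}]$-type slack in the intermediate estimates — and in verifying that the prefactor $\beta_\e$, though not constant, remains bounded, so that it cannot cancel the decay of the spectral remainder supplied by Lemma \ref{jara}.
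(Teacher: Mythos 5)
Your proposal is correct and follows essentially the same route as the paper's proof: the same two-sided coupling bound (\ref{eq:coupling})/(\ref{eq:couplingp}) to replace the renormalized distance by $\Wp(e^{-\qQ t_\e}x/\e+\oO_\infty,\oO_\infty)$, the same insertion of the spectral asymptotics of Lemma \ref{jara} together with the scaling limit of the prefactor, and the same continuity argument for $u\mapsto\Wp(u+\oO_\infty,\oO_\infty)$ to transfer the $\limsup$/$\liminf$. If anything, your explicit quantitative form of the continuity step (via translation invariance plus shift linearity, respectively (\ref{ec:cotaabajop01})) is slightly more detailed than the paper's appeal to ``continuity of $\Wp$''.
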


%

\begin{proof}[Proof of Proposition \ref{prop:replacement}]
Let $0<p'\lqq p$.
We first treat the case $0< p'\lqq 1 $.
By (\ref{eq:couplingp}),
for any $0< p'\lqq 1 $, we have 
\begin{align*}
\left|
\frac{\Wp(X_t^\e(x), \mu^\e)}{\e^{p'}} -\Wp\big(\frac{e^{-\qQ t} x}{\e} +  \oO_\infty,  \oO_\infty\big)\right|
\lqq \Wp(\oO_t, \oO_\infty).
\end{align*}
We continue with  the case $
p'\gqq 1     $.
By (\ref{eq:coupling}) and property d) in Lemma  \ref{lem:invariance} for any $p'\gqq 1$, we obtain
\begin{align*}
\left|
\frac{\Wp(X_t^\e(x), \mu^\e)}{\e} -\Wp\big(\frac{e^{-\qQ t} x}{\e} +  \oO_\infty,  \oO_\infty\big)\right|
\lqq \Wp(\oO_t, \oO_\infty).
\end{align*}
Combining the preceding inequalities we obtain for any $0<p'\lqq p$
\begin{align}\label{eq: continuidad666}
\left|
\frac{\Wp(X_t^\e(x), \mu^\e)}{\e^{\min\{p',1\}}} -\Wp\big(\frac{e^{-\qQ t} x}{\e} +  \oO_\infty,  \oO_\infty\big)\right|
\lqq \Wp(\oO_t, \oO_\infty).
\end{align}
Let $\ft^x_\e$ be the time scale given in Theorem \ref{th:profile} and $w_\e\to w>0$, as  
 $\e \to 0$.
By (\ref{eq: OUthermalizationW}) we have $\Wp(\oO_t, \oO_\infty)\to 0$ whenever $t\to \infty$.
Consequently,
\[
\limsup\limits_{\e\to 0}\frac{\Wp(X^\e_{\ft^x_\e+r\cdot w_\e}(x), X^\e_\infty)}{\e^{\min\{p',1\}}}=
\limsup_{\e\to 0}\Wp\big(\frac{e^{-\qQ (\ft^x_\e+r\cdot w_\e)} x}{\e} +  \oO_\infty,  \oO_\infty\big)
\]
and 
\[
\liminf\limits_{\e\to 0}\frac{\Wp(X^\e_{\ft^x_\e+r\cdot w_\e}(x), X^\e_\infty)}{\e^{\min\{p',1\}}}=
\liminf_{\e\to 0}\Wp\big(\frac{e^{-\qQ (\ft^x_\e+r\cdot w_\e)} x}{\e} +  \oO_\infty,  \oO_\infty\big).
\]
In the sequel, we study the asymptotics of the  drift term $\frac{e^{-\qQ t} x}{\e}$.
By Lemma \ref{jara}  it has the spectral decomposition
\begin{equation}\label{eq:newhartmandecomposition}
\lim_{t \to \infty} \left |\frac{e^{\fq t}}{t^{\ell-1}} e^{- \qQ t}x - \sum_{k=1}^{m} e^{i  t\theta_k} v_k\right | =0.
\end{equation}
A straightforward calculation shows
\begin{equation}\label{eq:newscaling}
\lim\limits_{\e \to 0} 
\frac{(\ft^x_\e+r\cdot w_\e)^{\ell-1} e^{-\fq (\ft^x_\e+r\cdot w_\e)}}{\e}=\fq^{1-\ell}e^{-\fq w r}.
\end{equation}
With the help of the spectral decomposition (\ref{eq:newhartmandecomposition})
and the triangle inequality we have
\begin{align}\label{eq:replacement}
\Wp&\big(\frac{e^{-\qQ (\ft^x_\e+r\cdot w_\e)}x}{\e} +  \oO_\infty,  \oO_\infty\big)
\nonumber\\
&\lqq 
\Wp\big(
\frac{(\ft^x_\e+r\cdot w_\e)^{\ell-1}}{\e e^{\fq (\ft^x_\e+r\cdot w_\e)}} \sum_{k=1}^{m} e^{i  (\ft^x_\e+r\cdot w_\e)\theta_k} v_k
 +  \oO_\infty,  \oO_\infty\big)
 \nonumber\\
 &\qquad +
 \Wp(\big(
\frac{(\ft^x_\e+r\cdot w_\e)^{\ell-1}}{\e e^{\fq (\ft^x_\e+r\cdot w_\e)}} \sum_{k=1}^{m} e^{i  (\ft^x_\e+r\cdot w_\e)\theta_k} v_k
-\frac{e^{-\qQ (\ft^x_\e+r\cdot w_\e)}x}{\e}\big)
 +  \oO_\infty,  \oO_\infty\big)
\nonumber\\
&=\Wp\big(
\frac{(\ft^x_\e+r\cdot w_\e)^{\ell-1}}{\e e^{\fq (\ft^x_\e+r\cdot w_\e)}} \sum_{k=1}^{m} e^{i  (\ft^x_\e+r\cdot w_\e)\theta_k} v_k
 +  \oO_\infty,  \oO_\infty\big)+R^x_\e,
\end{align}
where 
\[
R^x_\e:=\Wp(\big(
\frac{(\ft^x_\e+r\cdot w_\e)^{\ell-1}}{\e e^{\fq (\ft^x_\e+r\cdot w_\e)}} \sum_{k=1}^{m} e^{i  (\ft^x_\e+r\cdot w_\e)\theta_k} v_k
-\frac{e^{-\qQ (\ft^x_\e+r\cdot w_\e)}x}{\e}\big)
 +  \oO_\infty,  \oO_\infty\big).
\]
On the other  hand, analogous reasoning yields
\begin{align}\label{eq:Repsilon}
\Wp\big(
\frac{(\ft^x_\e+r\cdot w_\e)^{\ell-1}}{\e e^{\fq (\ft^x_\e+r\cdot w_\e)}} \sum_{k=1}^{m} e^{i  (\ft^x_\e+r\cdot w_\e)\theta_k} v_k
 +  \oO_\infty,  \oO_\infty\big)
 \lqq \Wp\big(\frac{e^{-\qQ (\ft^x_\e+r\cdot w_\e)}x}{\e} +  \oO_\infty,  \oO_\infty\big)+R^x_\e.
\end{align}
Combining (\ref{eq:replacement}) and (\ref{eq:Repsilon}) we have
\begin{align}\label{eq:ecuacioncomparadas}
\bigg|
\Wp\big(\frac{e^{-\qQ (\ft^x_\e+r\cdot w_\e)}x}{\e} +  \oO_\infty,  \oO_\infty\big)-
\Wp\big(
\frac{(\ft^x_\e+r\cdot w_\e)^{\ell-1}}{\e e^{\fq (\ft^x_\e+r\cdot w_\e)}} \sum_{k=1}^{m} e^{i  (\ft^x_\e+r\cdot w_\e)\theta_k} v_k
 +  \oO_\infty,  \oO_\infty\big)
\bigg|
\lqq R^x_\e.
\end{align}
In the sequel, we show $R^x_\e\to 0$ as $\e \to 0$. By continuity of $\Wp$ it 
is enough to prove 
\[
\big|\frac{(\ft^x_\e+r\cdot w_\e)^{\ell-1}}{\e e^{\fq (\ft^x_\e+r\cdot w_\e)}} \sum_{k=1}^{m} e^{i  (\ft^x_\e+r\cdot w_\e)\theta_k} v_k
-\frac{e^{-\qQ (\ft^x_\e+r\cdot w_\e)}x}{\e}\big|\to 0, \quad \e\to 0.
\]
By limit (\ref{eq:newhartmandecomposition}) and  limit (\ref{eq:newscaling}) we obtain for $\e \ra 0$ 
\begin{align*}
\big|\frac{(\ft^x_\e+r\cdot w_\e)^{\ell-1}}{\e e^{\fq (\ft^x_\e+r\cdot w_\e)}} \sum_{k=1}^{m} e^{i  (\ft^x_\e+r\cdot w_\e)\theta_k} v_k
-\frac{e^{-\qQ (\ft^x_\e+r\cdot w_\e)}x}{\e}\big|\\
&\hspace{-6cm}=
\frac{(\ft^x_\e+r\cdot w_\e)^{\ell-1}}{\e e^{\fq (\ft^x_\e+r\cdot w_\e)}}
\big|\sum_{k=1}^{m} e^{i  (\ft^x_\e+r\cdot w_\e)\theta_k} v_k
-\frac{e^{\fq (\ft^x_\e+r\cdot w_\e)}e^{-\qQ (\ft^x_\e+r\cdot w_\e)}x}{(\ft^x_\e+r\cdot w_\e)^{\ell-1}}\big|\to 0.
\end{align*}
Inequality (\ref{eq:ecuacioncomparadas}) with the help of the preceding limit yields
\begin{align*}
&\limsup_{\e\to 0}\Wp\big(\frac{e^{-\qQ (\ft^x_\e+r\cdot w_\e)}x}{\e} +  \oO_\infty,  \oO_\infty\big)\\
&=
\limsup_{\e\to 0}\Wp\big(
\frac{(\ft^x_\e+r\cdot w_\e)^{\ell-1}}{\e e^{\fq (\ft^x_\e+r\cdot w_\e)}} \sum_{k=1}^{m} e^{i  (\ft^x_\e+r\cdot w_\e)\theta_k} v_k
 +  \oO_\infty,  \oO_\infty\big).
\end{align*}
The analogous result holds for the lower limit. 
\end{proof}
\begin{proof}[Proof of Theorem \ref{th:profileabstract}]
We start with the proof of i) $\Longrightarrow$ ii). Statement 
i) implies that
for any $\lambda\gqq 0$
the map 
\[
\omega(x) \ni u\mapsto \Wp(\lambda u+\oO_\infty,\oO_\infty)
\]
defined on
\[
\omega(x)=\left\{\textrm{accumulation points of $\sum_{k=1}^{m} e^{i  t\theta_k} v_k$ as 
$t\to \infty$ }\right\}.
\]
is constant.
Hence
\begin{align}\label{eq:contlimitesuperior}
\limsup\limits_{\e\to 0}\Wp\Big(
\frac{(\ft^x_\e+r\cdot w_\e)^{\ell-1}}{\e e^{\fq (\ft^x_\e+r\cdot w_\e)}} 
\sum_{k=1}^{m} e^{i  (\ft^x_\e+r\cdot w_\e)\theta_k} v_k
 +  \oO_\infty,  \oO_\infty\Big)
 =\Wp\big(\fq^{1-\ell}e^{-\fq w r}\hat u+\oO_\infty,\oO_\infty\big).
\end{align}
Indeed, by definition of upper limits,
there is a sequence $(\e_n)_{n\in \NN}$, $\e_n\to 0$, $n\to \infty$ for which the upper limit is the true limit, i.e.,
\begin{align*}
\limsup\limits_{\e\to 0}&\Wp\big(
\frac{(\ft^x_\e+r\cdot w_\e)^{\ell-1}}{\e e^{\fq (\ft^x_\e+r\cdot w_\e)}} 
\sum_{k=1}^{m} e^{i  (\ft^x_\e+r\cdot w_\e)\theta_k} v_k
 +  \oO_\infty,  \oO_\infty\big)
\\
&=
\lim\limits_{n\to \infty}\Wp\big(
\frac{(\ft^x_{\e_n}+r\cdot w_{\e_n})^{\ell-1}}{\e_{n} e^{\fq (\ft^x_{\e_{n}}+r\cdot w_{\e_{n}})}} 
\sum_{k=1}^{m} e^{i  (\ft^x_{\e_{n}}+r\cdot w_{\e_{n}})\theta_k} v_k
 +  \oO_\infty,  \oO_\infty\big).
\end{align*}
By (\ref{eq:newscaling}) we have that the sequence
\[
\left(
\frac{(\ft^x_{\e_n}+r\cdot w_{\e_n})^{\ell-1}}{\e_{n} e^{\fq (\ft^x_{\e_{n}}+r\cdot w_{\e_{n}})}} 
\sum_{k=1}^{m} e^{i  (\ft^x_{\e_{n}}+r\cdot w_{\e_{n}})\theta_k} v_k
\right)_{n\in \NN}
\]
is uniformly bounded. Then the Bolzano-Weierstrass theorem 
yields a subsequence $({\e_{n_j}})_{j\in \NN}$ of $(\e_{n})_{n\in \NN}$ such that
\[
\lim_{j\to \infty}\frac{(\ft^x_{\e_{n_j}}+r\cdot w_{\e_{n_j}})^{\ell-1}}{{\e_{n_j}} e^{\fq (\ft^x_{{\e_{n_j}}}+r\cdot w_{\e_{n_j}})}} 
\sum_{k=1}^{m} e^{i  (\ft^x_{{\e_{n_j}}}+r\cdot w_{{\e_{n_j}}})\theta_k} v_k
=\fq^{1-\ell}e^{-\fq w r}\hat u 
\]
for some  $\hat u\in \omega(x)$.
By continuity we deduce
\begin{align*}
\limsup\limits_{\e\to 0}&\Wp\big(
\frac{(\ft^x_\e+r\cdot w_\e)^{\ell-1}}{\e e^{\fq (\ft^x_\e+r\cdot w_\e)}} 
\sum_{k=1}^{m} e^{i  (\ft^x_\e+r\cdot w_\e)\theta_k} v_k
 +  \oO_\infty,  \oO_\infty\big)
\\
&=
\lim\limits_{n\to \infty}\Wp\big(
\frac{(\ft^x_{\e_n}+r\cdot w_{\e_n})^{\ell-1}}{\e_{n} e^{\fq (\ft^x_{\e_{n}}+r\cdot w_{\e_{n}})}} 
\sum_{k=1}^{m} e^{i  (\ft^x_{\e_{n}}+r\cdot w_{\e_{n}})\theta_k} v_k
 +  \oO_\infty,  \oO_\infty\big)\\
&= \lim\limits_{j\to \infty}\Wp\big(
\frac{(\ft^x_{\e_{n_j}}+r\cdot w_{\e_{n_j}})^{\ell-1}}{{\e_{n_j}} e^{\fq (\ft^x_{{\e_{n_j}}}+r\cdot w_{{\e_{n_j}}})}} 
\sum_{k=1}^{m} e^{i  (\ft^x_{{\e_{n_j}}}+r\cdot w_{{\e_{n_j}}})\theta_k} v_k
 +  \oO_\infty,  \oO_\infty\big)\\
&=\Wp(\fq^{1-\ell}e^{-\fq w r}\hat u+ \oO_\infty, \oO_\infty).
\end{align*}
An analogous reasoning also justifies
\begin{align}
\label{eq:contlimiteinferior}
\liminf\limits_{\e\to 0}\Wp\big(
\frac{(\ft^x_\e+r\cdot w_\e)^{\ell-1}}{\e e^{\fq (\ft^x_\e+r\cdot w_\e)}} 
\sum_{k=1}^{m} e^{i  (\ft^x_\e+r\cdot w_\e)\theta_k} v_k
 +  \oO_\infty,  \oO_\infty\big)
 =\Wp\big(\fq^{1-\ell}e^{-\fq w r}\check u+\oO_\infty,\oO_\infty\big),
\end{align}
where $\check u\in \omega(x)$. For $\lambda=\fq^{1-\ell}e^{-\fq w r}$ we obtain 
\begin{equation}\label{eq:profileformula66}
\lim\limits_{\e\to 0}\Wp\big(
\frac{(\ft^x_\e+r\cdot w_\e)^{\ell-1}}{\e e^{\fq (\ft^x_\e+r\cdot w_\e)}} 
\sum_{k=1}^{m} e^{i  (\ft^x_\e+r\cdot w_\e)\theta_k} v_k
 +  \oO_\infty,  \oO_\infty\big)
 =\Wp\big(\fq^{1-\ell}e^{-\fq w r}u+\oO_\infty,\oO_\infty\big),
\end{equation}
where $u$ is any element in $\omega(x)$. 
It follows the proof of ii) $\Longrightarrow$ i). 
Statement ii) yields the existence of the limit 
(\ref{eq:distancia}).
By Proposition \ref{prop:replacement} we have that the limit (\ref{eq:equivalentedistancia}) also exists.
Pick an arbitrary element $u\in \omega(x)$. Then there exists a sequence of $(\e_n)_{n\in \NN}$, $\e_n\to 0$, $n\to \infty$ such that 
\[
\lim_{n\to \infty}
\sum_{k=1}^{m} e^{i  (\ft^x_{{\e_{n}}}+r\cdot w_{{\e_{n}}})\theta_k} v_k
=u.
\]
By (\ref{eq:newscaling}) we have 
\begin{align}\label{eq:limiteene}
\lim_{n\to \infty}
\frac{(\ft^x_{\e_{n}}+r\cdot w_{\e_{n}})^{\ell-1}}{{\e_{n}} e^{\fq (\ft^x_{{\e_{n}}}+r\cdot w_{\e_{n}})}} 
\sum_{k=1}^{m} e^{i  (\ft^x_{{\e_{n}}}+r\cdot w_{{\e_{n}}})\theta_k} v_k
=\fq^{1-\ell}e^{-\fq w r} u.
\end{align}
On the other hand, the existence of the limit
(\ref{eq:equivalentedistancia}) implies for any $({\ti {\e}_n})_{n\in \NN}$
 such that $\ti {\e}_n\to 0$, as $n\to \infty$ the limit
\[
\lim\limits_{n\to \infty}\Wp\big(
\frac{(\ft^x_{\ti {\e}_n}+r\cdot w_{\ti {\e}_n})^{\ell-1}}{{\ti {\e}_n} e^{\fq (\ft^x_{\ti {\e}_n}+r\cdot w_{\ti {\e}_n})}} 
\sum_{k=1}^{m} e^{i  (\ft^x_{\ti {\e}_n}+r\cdot w_{\ti {\e}_n})\theta_k} v_k
 +  \oO_\infty,  \oO_\infty\big)
 =\Wp\big(\fq^{1-\ell}e^{-\fq w r}\ti u+\oO_\infty,\oO_\infty\big)
\]
for some $\ti u\in \omega(x)$.
In particular for ${\ti {\e}_n}=\e_n$ we have
\begin{align*}
\Wp\big(\fq^{1-\ell}e^{-\fq w r}\ti u+\oO_\infty,\oO_\infty\big)&=
\lim\limits_{n\to \infty}\Wp\big(
\frac{(\ft^x_{\e_n}+r\cdot w_{\e_n})^{\ell-1}}{{\e_n} e^{\fq (\ft^x_{\e_n}+r\cdot w_{\e_n})}} 
\sum_{k=1}^{m} e^{i  (\ft^x_{\e_n}+r\cdot w_{\e_n})\theta_k} v_k
 +  \oO_\infty,  \oO_\infty\big)
\\
& =\Wp\big(\fq^{1-\ell}e^{-\fq w r} u+\oO_\infty,\oO_\infty\big),
\end{align*}
where the last equality follows by (\ref{eq:limiteene}) and continuity.
As a consequence, the function
\[
\omega(x)\ni u\mapsto \Wp\big(\fq^{1-\ell}e^{-\fq w r} u+\oO_\infty,\oO_\infty\big)=\Wp\big(\fq^{1-\ell}e^{-\fq w r}\ti u+\oO_\infty,\oO_\infty\big)
\]
is constant.
Since $r\in \RR$ is arbitrary, we obtain the statement i) for arbitrary $\lambda>0$.
\end{proof}

\begin{proof}[Proof of Theorem \ref{th:profile}]
By Theorem \ref{th:profileabstract} we have that profile thermalization holds if and only if for any $\lambda\gqq 0$
 the shift map $\sS_{\lambda}^\infty:\omega(x)\rightarrow [0,\infty)$ defined by $\sS_{\lambda}^\infty(u)=\Wp(\lambda u+\oO_\infty,\oO_\infty)$ is constant.
Since $1\lqq p'\lqq p$, property d) of Lemma \ref{lem:invariance} yields
\[
\Wp(v+\oO_\infty,\oO_\infty)=|v|
\] 
for any $v\in \RR^d$.
Therefore, $\sS_{\lambda}^\infty(u)=|\lambda u|$ for any $u\in \omega(x)$.
Consequently, the map $\sS^\infty_\lambda$ is constant if and only if $\omega(x)$ is a contained in a sphere.
In this case,  (\ref{eq:profileformula66}) yields the cutoff thermalization profile
\[
\pP_x(r)=\fq^{1-\ell}e^{-\fq w r}|u|,
\] 
where $u$ is any
element in $ \omega(x)$.
\end{proof}

\section{Proof of Theorem \ref{th:linearwhite} (window cutoff thermalization)}\label{ap:C}
\begin{proof}[Proof of Theorem \ref{th:linearwhite}]
Let $0<p'\lqq p$.
By Proposition \ref{prop:replacement}  we have
\[
\limsup_{\e\to 0}\frac{\Wp(X_{\ft^x_\e+r\cdot w_\e}^\e(x), \mu^\e)}{\e^{\min\{p',1\}}}=\limsup_{\e\to 0}\Wp\big(
\frac{(\ft^x_\e+r\cdot w_\e)^{\ell-1}}{\e e^{\fq (\ft^x_\e+r\cdot w_\e)}} \sum_{k=1}^{m} e^{i  (\ft^x_\e+r\cdot w_\e)\theta_k} v_k
 +  \oO_\infty,  \oO_\infty\big)
\]
and 
\[
\liminf_{\e\to 0}\frac{\Wp(X_{\ft^x_\e+r\cdot w_\e}^\e(x), \mu^\e)}{\e^{\min\{p',1\}}}=\liminf_{\e\to 0}\Wp\big(
\frac{(\ft^x_\e+r\cdot w_\e)^{\ell-1}}{\e e^{\fq (\ft^x_\e+r\cdot w_\e)}} \sum_{k=1}^{m} e^{i  (\ft^x_\e+r\cdot w_\e)\theta_k} v_k
 +  \oO_\infty,  \oO_\infty\big).
\]
In particular, Property d) in Lemma \ref{lem:invariance} yields 
\begin{equation}\label{eq:cotasuperiornferior}
|u_\e|^{\min\{p',1\}}-2\EE[\oO_\infty|^{p'}]\lqq 
\Wp(u_\e+\oO_\infty,\oO_\infty)\lqq |u_\e|^{\min\{p',1\}},
\end{equation}
where
\[
u_\e=\frac{(\ft^x_\e+r\cdot w_\e)^{\ell-1}}{\e e^{\fq (\ft^x_\e+r\cdot w_\e)}} \sum_{k=1}^{m} e^{i  (\ft^x_\e+r\cdot w_\e)\theta_k} v_k.
\]
We start with the upper limit.
Therefore by (\ref{eq:newscaling}) and (\ref{eq:cotasuperiornferior}) we have
\begin{align*}
\limsup_{\e\to 0}\frac{\Wp(X_{\ft^x_\e+r\cdot w_\e}^\e(x), \mu^\e)}{\e^{\min\{p',1\}}}&\lqq 
\limsup_{\e\to 0}
|u_\e|^{\min\{p',1\}}\\
&
\lqq \fq^{1-\ell}e^{-\fq w r}\limsup_{\e\to 0}|\sum_{k=1}^{m} e^{i  (\ft^x_\e+r\cdot w_\e)\theta_k} v_k|\lqq e^{-\fq w r}\fq^{1-\ell}
\sum_{k=1}^{m} |v_k|.
\end{align*}
Hence, 
\[
\lim_{r\to \infty}
\limsup_{\e\to 0}\frac{\Wp(X_{\ft^x_\e+r\cdot w_\e}^\e(x), \mu^\e)}{\e^{\min\{p',1\}}}=0.
\]
We continue with the lower limit.
By (\ref{eq:newscaling}) and (\ref{eq:cotasuperiornferior}) we have
\begin{align*}
\liminf_{\e\to 0}\frac{\Wp(X_{\ft^x_\e+r\cdot w_\e}^\e(x), \mu^\e)}{\e^{\min\{p',1\}}}&\gqq 
\liminf_{\e\to 0}
|u_\e|^{\min\{p',1\}}-2\EE[|\oO_\infty|^{p'}]\\
&
=  \fq^{1-\ell}e^{-\fq w r}\liminf_{\e\to 0}|\sum_{k=1}^{m} e^{i  (\ft^x_\e+r\cdot w_\e)\theta_k} v_k|-2\EE[|\oO_\infty|^{p'}]\\
& \gqq
  \fq^{1-\ell}e^{-\fq w r}\liminf_{t\to \infty}|\sum_{k=1}^{m} e^{i \theta_k t} v_k|-2\EE[|\oO_\infty|^{p'}]
\end{align*}
By (\ref{eq:belowabove}) in Lemma \ref{jara} we have $\liminf_{t\to \infty}|\sum_{k=1}^{m} e^{i \theta_k t} v_k|>0$.
Hence, 
\[
\lim_{r\to -\infty}
\liminf_{\e\to 0}\frac{\Wp(X_{\ft^x_\e+r\cdot w_\e}^\e(x), \mu^\e)}{\e^{\min\{p',1\}}}=\infty.
\]
\end{proof}

\section{Proof of Corollary \ref{cor:momentscutoffOUP} and Corollary~\ref{cor: delta-window} (Moment thermalization cutoff)}\label{ap:D}

The following lemma is used in the proof of Corollary \ref{cor:momentscutoffOUP} and in Subsection \ref{ss:ext1}.

\begin{lem}\label{lem:precisehartmandecomp}
For any $d\times d$ matrix $A$ it follows
\begin{align*}
&\left|
\Big|A \frac{e^{-\qQ (\ft_\e^x+r\cdot w_\e)}x}{\e}\Big|-\Big|A 
\frac{(\ft^x_\e+r\cdot w_\e)^{\ell-1}}{\e e^{\fq (\ft^x_\e+r\cdot w_\e)}} \sum_{k=1}^{m} e^{i  (\ft^x_\e+r\cdot w_\e)\theta_k} v_k\Big|\right|\\
&\lqq |A|\frac{(\ft^x_\e+r\cdot w_\e)^{\ell-1}}{\e e^{\fq (\ft^x_\e+r\cdot w_\e)}}
\left| \Big(e^{\fq (\ft^x_\e+r\cdot w_\e)}\frac{e^{-\qQ (\ft_\e^x+r\cdot w_\e)}x}{(\ft^x_\e+r\cdot w_\e)^{\ell-1}}-
 \sum_{k=1}^{m} e^{i  (\ft^x_\e+r\cdot w_\e)\theta_k} v_k\Big)\right|.
\end{align*}
\end{lem}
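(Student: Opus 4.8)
The plan is to reduce the statement to two elementary facts — the reverse triangle inequality $\big||a|-|b|\big|\le|a-b|$ for vectors and the bound $|Av|\le|A|\,|v|$ for the matrix norm, already used in (\ref{eq:initialgrowth}) and in the proof of Lemma \ref{lem:erg5} — after pulling out the common positive scalar
\[
c_\e:=\frac{(\ft^x_\e+r\cdot w_\e)^{\ell-1}}{\e\,e^{\fq(\ft^x_\e+r\cdot w_\e)}}>0.
\]

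First I would write $s_\e:=\ft^x_\e+r\cdot w_\e$ and observe, using linearity of $A$ and $c_\e>0$, that both quantities whose norms appear on the left-hand side are of the form $c_\e A w_j$, namely
\[
A\,\frac{e^{-\qQ s_\e}x}{\e}=c_\e\,A w_1,\qquad A\,\frac{(\ft^x_\e+r\cdot w_\e)^{\ell-1}}{\e e^{\fq s_\e}}\sum_{k=1}^{m}e^{is_\e\theta_k}v_k=c_\e\,A w_2,
\]
with $w_1:=\dfrac{e^{\fq s_\e}}{s_\e^{\ell-1}}e^{-\qQ s_\e}x$ and $w_2:=\sum_{k=1}^{m}e^{is_\e\theta_k}v_k$; here one checks directly that $c_\e\cdot\frac{e^{\fq s_\e}}{s_\e^{\ell-1}}=\frac1\e$, so $c_\e w_1=\frac{e^{-\qQ s_\e}x}{\e}$. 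Since $c_\e$ is a positive real number it commutes with $|\cdot|$, i.e.\ $|c_\e A w_j|=c_\e|A w_j|$.

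Then I would simply chain the two inequalities: by the reverse triangle inequality applied to the vectors $c_\e A w_1$ and $c_\e A w_2$, followed by $|A(w_1-w_2)|\le|A|\,|w_1-w_2|$,
\[
\Big|\,|c_\e A w_1|-|c_\e A w_2|\,\Big|\le |c_\e A w_1-c_\e A w_2|=c_\e\,|A(w_1-w_2)|\le c_\e\,|A|\,|w_1-w_2|,
\]
and this is exactly the asserted bound once one recognizes $w_1-w_2=e^{\fq s_\e}\dfrac{e^{-\qQ s_\e}x}{s_\e^{\ell-1}}-\sum_{k=1}^{m}e^{is_\e\theta_k}v_k$. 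No step here presents a genuine obstacle; the only point that deserves a word of care is that $c_\e$ is a \emph{positive} scalar, so it may be extracted from absolute values and matrix norms without sign issues, and that the vector norm $|\cdot|$ and the matrix norm $|\cdot|$ are compatible in the submultiplicative sense used above.
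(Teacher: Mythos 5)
Your proof is correct and follows essentially the same route as the paper's: the reverse triangle inequality $\big||a|-|b|\big|\lqq|a-b|$ combined with the submultiplicativity $|Av|\lqq|A|\,|v|$, with the positive scalar factored out. The extra bookkeeping with $c_\e$ is harmless and matches the paper's rescaling step.
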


\begin{proof} The inverse triangle inequality and the submultiplicativity of matrix norm imply 
\begin{align*}
\left|
\Big|A e^{-\qQ t}x\Big|-\Big|A
\frac{t^{\ell-1}}{ e^{\fq t}} \sum_{k=1}^{m} e^{i t\theta_k} v_k\Big|\right|& \lqq \Big|A e^{-\qQ t}x-A
\frac{t^{\ell-1}}{ e^{\fq t}} \sum_{k=1}^{m} e^{i t\theta_k} v_k\Big| \\
&\lqq |A|\Big| e^{-\qQ t}x-
\frac{t^{\ell-1}}{ e^{\fq t}} \sum_{k=1}^{m} e^{i t\theta_k} v_k\Big|.
\end{align*}
In particular, we have 
\begin{align*}
&\left|
\Big|A \frac{e^{-\qQ (\ft_\e^x+r\cdot w_\e)}x}{\e}\Big|-\Big|A
\frac{(\ft^x_\e+r\cdot w_\e)^{\ell-1}}{\e e^{\fq (\ft^x_\e+r\cdot w_\e)}} \sum_{k=1}^{m} e^{i (\ft^x_\e+r\cdot w_\e)\theta_k} v_k\Big|\right|\\
&\lqq |A|\frac{(\ft^x_\e+r\cdot w_\e)^{\ell-1}}{\e e^{\fq (\ft^x_\e+r\cdot w_\e)}}
\left| \Big(e^{\fq (\ft^x_\e+r\cdot w_\e)}\frac{e^{-\qQ (\ft_\e^x+r\cdot w_\e)}x}{(\ft^x_\e+r\cdot w_\e)^{\ell-1}}-
\sum_{k=1}^{m} e^{i (\ft^x_\e+r\cdot w_\e)\theta_k} v_k\Big)\right|.
\end{align*}

\end{proof}

We show the moment thermalization of Corollary \ref{cor:momentscutoffOUP}. 

\begin{proof}[Proof of Corollary \ref{cor:momentscutoffOUP}]
For convenience we start with the case $p'\gqq 1$ and we write $\|X\|_{p'}=(\EE[|X|^{p'}])^{1/p'}$.
We start with the proof of 
\begin{align*}
\lim_{r\to \infty}
\limsup_{\e\to 0}\frac{1}{\e^{p'}}\EE[|X^{\e}_{\ft_\e^x+r\cdot w_\e}(x)|^{p'}]=\EE[|\oO_{\infty}|^{p'}].
\end{align*}
Note that
\begin{align*}
\frac{1}{\e}\|X^\e_t(x)\|_{p'}&=\frac{1}{\e}\|e^{-\qQ t}x+\e \oO_t\|_{p'}\lqq 
\frac{1}{\e}|e^{-\qQ t}x|+\|\oO_t\|_{p'}.
\end{align*}
Hence
\begin{align*}
\frac{1}{\e^{p'}}\|X^\e_t(x)\|^{p'}_{p'}\lqq \Big(
\frac{1}{\e}|e^{-\qQ t}x|+\|\oO_t\|_{p'}\Big)^{p'}.
\end{align*}
Since $\W(\oO_t,\oO_\infty)\to 0$ as $t\to \infty$, we have $\|\oO_t\|_{p'}\to \|\oO_\infty\|_{p'} <\infty$ as $t\to \infty$.
The preceding inequality yields
\begin{align*}
\limsup_{\e\to 0}\frac{1}{\e^{p'}}\|X^\e_{\ft_\e^x+r\cdot w_\e}(x)\|^{p'}_{p'} &\lqq 
\limsup_{\e\to 0}
\Big(
\frac{1}{\e}|e^{-\qQ (\ft_\e^x+r\cdot w_\e)}x|+\|\oO_{\ft_\e^x+r\cdot w_\e}\|_{p'}\Big)^{p'}\\
&=
\Big(
\limsup_{\e\to 0}\big(
\frac{1}{\e}|e^{-\qQ (\ft_\e^x+r\cdot w_\e)}x|+\|\oO_{\ft_\e^x+r\cdot w_\e}\|_{p'}\big)\Big)^{p'}\\
&=
\Big(
\limsup_{\e\to 0}\big(
\frac{1}{\e}|e^{-\qQ (\ft_\e^x+r\cdot w_\e)}x|\big)+\|\oO_{\infty}\|_{p'}\Big)^{p'}.
\end{align*}
By Lemma \ref{lem:precisehartmandecomp}
and the  continuity of $y\mapsto |y|^{p'}$, sending $r\to \infty$ we obtain
\begin{align}\label{ec:limitesuperior2}
\lim_{r\to \infty}\limsup_{\e\to 0}\frac{1}{\e^{p'}}\|X^\e_{\ft_\e^x+r\cdot w_\e}(x)\|^{p'}_{p'} &\lqq 
\Big(\lim_{r\to \infty}
\limsup_{\e\to 0}\big(
\frac{1}{\e}|e^{-\qQ (\ft_\e^x+r\cdot w_\e)}x|\big)+\|\oO_{\infty}\|_{p'}\Big)^{p'}=
\|\oO_{\infty}\|_{p'}^{p'}.
\end{align}
We continue with the proof of
\[
\lim_{r\to \infty}\liminf_{\e\to 0}\frac{1}{\e^{p'}}\|X^\e_{\ft_\e^x+r\cdot w_\e}(x)\|^{p'}_{p'}\gqq \|\oO_{\infty}\|_{p'}^{p'}.
\]
Note that
\begin{align*}
\|\oO_t\|_{p'}=\frac{1}{\e}\|\e\oO_t\|_{p'}\lqq \frac{1}{\e}\|X^{\e}_t(x)\|_{p'}+ 
\frac{1}{\e}|e^{-\qQ t}x|.
\end{align*}
Hence
\[
\liminf_{\e \to 0}\|\oO_{\ft^x_\e+r\cdot w_\e}\|_{p'}+\liminf_{\e \to 0}\big(-\frac{1}{\e}|e^{-\qQ (\ft^x_\e+r\cdot w_\e)}x|\big)\lqq 
\liminf_{\e \to 0}\big(
\frac{1}{\e}\|X^{\e}_{\ft^x_\e+r\cdot w_\e}(x)\|_{p'}\big).
\]
Since for a general family $(a_\e)_{\e>0}$ we have $\liminf_{\e\to 0} (-a_\e)=-\limsup_{\e \to 0} a_\e$ it follows
\[
\|\oO_{\infty}\|_{p'}-\limsup_{\e \to 0}\frac{1}{\e}|e^{-\qQ (\ft^x_\e+r\cdot w_\e)}x|
\lqq 
\liminf_{\e \to 0}
\frac{1}{\e}\|X^{\e}_{\ft^x_\e+r\cdot w_\e}(x)\|_{p'}.
\]
By continuity we have 
\[
\left(\|\oO_{\infty}\|_{p'}-\limsup_{\e \to 0}\frac{1}{\e}|e^{-\qQ (\ft^x_\e+r\cdot w_\e)}x|\right)^{p'}\lqq 
\liminf_{\e \to 0}\big(
\frac{1}{\e^{p'}}\|X^{\e}_{\ft_\e^x+r\cdot w_\e}(x)\|^{p'}_{p'}\big).
\]
By Lemma \ref{lem:precisehartmandecomp}, 
and sending $r\to \infty$ we have
\begin{align}\label{ec:limsuperior}
\|\oO_{\infty}\|^{p'}_{p'}\lqq 
\lim_{r\to \infty}\liminf_{\e \to 0}
\frac{1}{\e^{p'}}\|X^{\e}_{\ft_\e^x+r\cdot w_\e}(x)\|^{p'}_{p'}.
\end{align}
Combining (\ref{ec:limitesuperior2}) and (\ref{ec:limsuperior}) we obtain
\begin{align*}
\lim_{r\to \infty}
\liminf_{\e\to 0}\frac{1}{\e^{p'}}\|X^{\e}_{\ft_\e^x+r\cdot w_\e}(x)\|^{p'}_{p'}=
\lim_{r\to \infty}
\limsup_{\e\to 0}\frac{1}{\e^{p'}}\|X^{\e}_{\ft_\e^x+r\cdot w_\e}(x)\|^{p'}_{p'}=\|\oO_{\infty}\|^{p'}_{p'}.
\end{align*}
In the sequel, we show
\begin{align*}
\lim_{r\to -\infty}
\liminf_{\e\to 0}\frac{1}{\e^{p'}}\|X^{\e}_{\ft_\e^x+r\cdot w_\e}(x)\|^{p'}_{p'}=\infty.
\end{align*}
Note that
\begin{align*}
\frac{1}{\e}|e^{-\qQ t}x|=
\frac{1}{\e}\|X^{\e}_t(x)-\e \oO_t\|_{p'}\lqq \frac{1}{\e}\|X^{\e}_t(x)\|_{p'}+\| \oO_t\|_{p'}
\end{align*}
and hence
\begin{align*}
\frac{1}{\e^{p'}}|e^{-\qQ t}x|^{p'}\lqq
\Big( \frac{1}{\e}\|X^{\e}_t(x)\|_{p'}+\| \oO_t\|_{p'}\Big)^{p'}.
\end{align*} 
\begin{align*}
\liminf_{\e\to 0}
\frac{1}{\e^{p'}}|e^{-\qQ (\ft_\e^x+r\cdot w_\e)}x|^{p'}&\lqq
\liminf_{\e\to 0}
\Big( \frac{1}{\e}\|X^{\e}_{\ft_\e^x+r\cdot w_\e}(x)\|_{p'}+\| \oO_{\ft_\e^x+r\cdot w_\e}\|_{p'}\Big)^{p'}\\
&=
\Big(\liminf_{\e\to 0}\big(\frac{1}{\e}\|X^{\e}_{\ft^x_\e+r\cdot w_\e}(x)\|_{p'}+\| \oO_{\ft_\e^x+r\cdot w_\e}\|_{p'}\big)\Big)^{p'}\\
&=
\Big(\liminf_{\e\to 0}\big(\frac{1}{\e}\|X^{\e}_{\ft^x_\e+r\cdot w_\e}(x)\|_{p'}\big)+\| \oO_{\infty}\|_{p'}\Big)^{p'}.
\end{align*}
Sending $r\to -\infty$,
Lemma \ref{lem:precisehartmandecomp} yields
\begin{align*}
\infty=\lim_{r\to -\infty}\liminf_{\e\to 0}
\frac{1}{\e^{p'}}|e^{-\qQ (\ft_\e^x+r\cdot w^\e)}x|^{p'}&\lqq
\Big(\lim_{r\to -\infty}\liminf_{\e\to 0}\big(\frac{1}{\e}\|X^{\e}_{\ft_\e^x+r\cdot w_\e}(x)\|_{p'}\big)+\| \oO_{\infty}\|_{p'}\Big)^{p'}.
\end{align*}
Hence 
\[
\lim_{r\to -\infty}\liminf_{\e\to 0}\frac{1}{\e^{p'}}\|X^{\e}_{\ft_\e^x+r\cdot w_\e}(x)\|^{p'}_{p'}=\infty.
\]
We continue with the case $p'\in (0,1)$ and we write $\|X\|_{p'}=\EE[|X|^{p'}]$.
We start with the proof of 
\begin{align*}
\lim_{r\to \infty}
\limsup_{\e\to 0}\frac{1}{\e^{p'}}\EE[|X^{\e}_{\ft_\e^x+r\cdot w_\e}(x)|^{p'}]=\EE[|\oO_{\infty}|^{p'}].
\end{align*}
Note that
\begin{align*}
\frac{1}{\e^{p'}}\|X^\e_t(x)\|_{p'}&=\frac{1}{\e^{p'}}\|e^{-\qQ t}x+\e \oO_t\|_{p'}\lqq 
\frac{1}{\e^{p'}}|e^{-\qQ t}x|^{p'}+\|\oO_t\|_{p'}.
\end{align*}
Hence
\begin{align*}
\frac{1}{\e^{p'}}\|X^\e_t(x)\|_{p'}\lqq
\frac{1}{\e^{p'}}|e^{-\qQ t}x|+\|\oO_t\|_{p'}.
\end{align*}
Since $\W(\oO_t,\oO_\infty)\to 0$ as $t\to \infty$, we have $\|\oO_t\|_{p'}\to \|\oO_\infty\|_{p'} <\infty$ as $t\to \infty$.
The preceding inequality yields
\begin{align*}
\limsup_{\e\to 0}\frac{1}{\e^{p'}}\|X^\e_{\ft_\e^x+r\cdot w_\e}(x)\|_{p'} &\lqq 
\limsup_{\e\to 0}
\frac{1}{\e^{p'}}|e^{-\qQ (\ft_\e^x+r\cdot w_\e)}x|^{p'}+\|\oO_{\ft_\e^x+r\cdot w_\e}\|_{p'}\\
&=
\limsup_{\e\to 0}\big(
\frac{1}{\e}|e^{-\qQ (\ft_\e^x+r\cdot w_\e)}x|\big)^{p'}+\|\oO_{\infty}\|_{p'}\\
&=
\Big(
\limsup_{\e\to 0}
\frac{1}{\e}|e^{-\qQ (\ft_\e^x+r\cdot w_\e)}x|\Big)^{p'}+\|\oO_{\infty}\|_{p'}.
\end{align*}
By Lemma \ref{lem:precisehartmandecomp}
and the  continuity of $y\mapsto |y|^{p'}$, sending $r\to \infty$ we obtain
\begin{align}\label{ec:limitesuperior222}
\lim_{r\to \infty}\limsup_{\e\to 0}\frac{1}{\e^{p'}}\|X^\e_{\ft_\e^x+r\cdot w_\e}(x)\|_{p'} &\lqq 
\Big(\lim_{r\to \infty}
\limsup_{\e\to 0}
\frac{1}{\e}|e^{-\qQ (\ft_\e^x+r\cdot w_\e)}x|\Big)^{p'}+\|\oO_{\infty}\|_{p'}=
\|\oO_{\infty}\|_{p'}^{p'}.
\end{align}
We continue with the proof of
\[
\lim_{r\to \infty}\liminf_{\e\to 0}\frac{1}{\e^{p'}}\|X^\e_{\ft_\e^x+r\cdot w_\e}(x)\|_{p'}\gqq \|\oO_{\infty}\|_{p'}.
\]
Note that
\begin{align*}
\|\oO_t\|_{p'}=\frac{1}{\e^{p'}}\|\e\oO_t\|_{p'}\lqq \frac{1}{\e^{p'}}\|X^{\e}_t(x)\|_{p'}+ 
\frac{1}{\e^{p'}}|e^{-\qQ t}x|^{p'}.
\end{align*}
Hence
\[
\liminf_{\e \to 0}\|\oO_{\ft_\e^x+r\cdot w_\e}\|_{p'}+\liminf_{\e \to 0}\big(-\frac{1}{\e^{p'}}|e^{-\qQ (\ft_\e^x+r\cdot w_\e)}x|^{p'}\big)\lqq 
\liminf_{\e \to 0}\big(
\frac{1}{\e^{p'}}\|X^{\e}_{\ft_\e^x+r\cdot w_\e}(x)\|_{p'}\big).
\]
Since for a general sequence $(a_\e)_{\e>0}$ we have $\liminf_{\e\to 0} (-a_\e)=-\limsup_{\e \to 0} a_\e$ it follows
\[
\|\oO_{\infty}\|_{p'}-\limsup_{\e \to 0}\frac{1}{\e^{p'}}|e^{-\qQ (\ft_\e^x+r\cdot w_\e)}x|^{p'}
\lqq 
\liminf_{\e \to 0}
\frac{1}{\e^{p'}}\|X^{\e}_{\ft_\e^x+r\cdot w_\e}(x)\|_{p'}.
\]
By Lemma \ref{lem:precisehartmandecomp},
 and sending $r\to \infty$ we obtain
\begin{align}\label{ec:limsuperiornew}
\|\oO_{\infty}\|_{p'}\lqq 
\lim_{r\to \infty}\liminf_{\e \to 0}
\frac{1}{\e^{p'}}\|X^{\e}_{\ft_\e^x+r\cdot w_\e}(x)\|_{p'}.
\end{align}
Combining (\ref{ec:limitesuperior222}) and (\ref{ec:limsuperiornew}) we obtain
\begin{align*}
\lim_{r\to \infty}
\liminf_{\e\to 0}\frac{1}{\e^{p'}}\|X^{\e}_{\ft_\e^x+r\cdot w_\e}(x)\|_{p'}=
\lim_{r\to \infty}
\limsup_{\e\to 0}\frac{1}{\e^{p'}}\|X^{\e}_{\ft_\e^x+r\cdot w_\e}(x)\|_{p'}=\|\oO_{\infty}\|_{p'}.
\end{align*}
In the sequel, we show
\begin{align*}
\lim_{r\to -\infty}
\liminf_{\e\to 0}\frac{1}{\e^{p'}}\|X^{\e}_{\ft_\e^x+r\cdot w_\e}(x)\|_{p'}=\infty.
\end{align*}
Note that
\begin{align*}
\frac{1}{\e^{p'}}|e^{-\qQ t}x|^{p'}=
\frac{1}{\e^{p'}}\|X^{\e}_t(x)-\e \oO_t\|_{p'}\lqq \frac{1}{\e^{p'}}\|X^{\e}_t(x)\|_{p'}+\| \oO_t\|_{p'}
\end{align*}
and hence 
\begin{align*}
\liminf_{\e\to 0}
\frac{1}{\e^{p'}}|e^{-\qQ (\ft_\e^x+r\cdot w_\e)}x|^{p'}&\lqq
\liminf_{\e\to 0}\Big(
\frac{1}{\e^{p'}}\|X^{\e}_{\ft_\e^x+r\cdot w_\e}(x)\|_{p'}+\| \oO_{\ft_\e^x+r\cdot w_\e}\|_{p'}\Big)\\
&=
\liminf_{\e\to 0}\big(\frac{1}{\e^{p'}}\|X^{\e}_{\ft_\e^x+r\cdot w_\e}(x)\|_{p'}\big)+\| \oO_{\infty}\|_{p'}.
\end{align*}
Sending $r\to -\infty$,
Lemma \ref{lem:precisehartmandecomp} yields
\begin{align*}
\infty=\lim_{r\to -\infty}\liminf_{\e\to 0}
\frac{1}{\e^{p'}}|e^{-\qQ (\ft_\e^x+r\cdot w_\e)}x|^{p'}&\lqq
\lim_{r\to -\infty}\liminf_{\e\to 0}\big(\frac{1}{\e^{p'}}\|X^{\e}_{\ft_\e^x+r\cdot w_\e}(x)\|_{p'}\big)+\| \oO_{\infty}\|_{p'}.
\end{align*}
Hence 
\begin{equation*}
\lim_{r\to -\infty}\liminf_{\e\to 0}\frac{1}{\e^{p'}}\|X^{\e}_{\ft_\e^x+r\cdot w_\e}(x)\|_{p'}=\infty.
\end{equation*}
\end{proof}

In the sequel we prove Corollary~\ref{cor: delta-window}.

\begin{proof}[Proof of Corollary~\ref{cor: delta-window}]
Let $\ft^x_\e$ be the time scale given in Theorem \ref{th:profile}.
By (\ref{eq: continuidad666}) we have for any $\delta>0$ 
\begin{equation}\label{eq:arribasup}
\limsup\limits_{\e\to 0}\frac{\Wp(X^\e_{\delta \ft^x_\e}(x), X^\e_\infty)}{\e^{\min\{p',1\}}}=
\limsup_{\e\to 0}\Wp\big(\frac{e^{-\qQ (\delta\ft^x_\e)} x}{\e} + \oO_\infty, \oO_\infty\big)
\end{equation}
and
\begin{equation}\label{eq:abajoinf}
\liminf\limits_{\e\to 0}\frac{\Wp(X^\e_{\delta\ft^x_\e}(x), X^\e_\infty)}{\e^{\min\{p',1\}}}=
\liminf_{\e\to 0}\Wp\big(\frac{e^{-\qQ (\delta\ft^x_\e)} x}{\e} + \oO_\infty, \oO_\infty\big).
\end{equation}
By \eqref {eq:belowabove} given in Lemma \ref{jara} we have
\begin{equation}\label{eq:deltaabovebelow}
0<\liminf_{\e \to 0} \left |\frac{e^{\fq (\delta\ft^x_\e)}}{(\delta\ft^x_\e)^{\ell-1}} e^{- \qQ (\delta\ft^x_\e)}x \right |\lqq \limsup_{\e \to 0} \left |\frac{e^{\fq (\delta\ft^x_\e)}}{(\delta\ft^x_\e)^{\ell-1}} e^{- \qQ (\delta\ft^x_\e)}x \right|<\infty
\end{equation}
for any $\delta>0$. A straightforward calculation shows
\begin{equation}\label{eq:deltalimit}
\lim\limits_{\e \to 0}
\frac{(\delta\ft^x_\e)^{\ell-1} e^{-\fq (\delta\ft^x_\e)}}{\e}=
\left.\begin{cases}
\infty & \textrm{ for } \delta\in (0,1),\\
0 & \textrm{ for } \delta>1.
\end{cases}\right\}
\end{equation}
Since
\[
\frac{e^{- \qQ (\delta\ft^x_\e)}x}{\e}=
\frac{(\delta\ft^x_\e)^{\ell-1} e^{-\fq (\delta\ft^x_\e)}}{\e}\frac{e^{\fq (\delta\ft^x_\e)}}{(\delta\ft^x_\e)^{\ell-1}}e^{- \qQ (\delta\ft^x_\e)}x,
\]
the relations \eqref{eq:arribasup}, \eqref{eq:abajoinf}, \eqref{eq:deltaabovebelow} and the limit \eqref{eq:deltalimit} imply  with the help of the continuity of $\Wp$ the desired result: 
\[
\lim\limits_{\e\to 0}\frac{\Wp(X^\e_{\delta \ft^x_\e}(x), X^\e_\infty)}{\e^{\min\{p',1\}}}=
\left.\begin{cases}
\infty & \textrm{ for } \delta\in (0,1),\\
0 & \textrm{ for } \delta>1.
\end{cases}\right\}
\] 
 \end{proof}

\section{Proof of Theorem \ref{th:normalgrowth} (Normal growth characterization)}\label{Ap:normal}

We start with the following lemma, which shows that the $\CC$-linear independence of a family of pairs complex conjugate vectors implies the $\RR$-linear independence of the family of real and imaginary parts, in which the characterization in Theorem \ref{th:normalgrowth} is stated. The lemma is used in the representation (\ref{eq:wthetarepre1}). The proof is given for completeness. \\
\begin{lem}[Decomplexificacion]\label{lem:base}
For any $d\in \NN$ let $d\gqq 2n+r$ for some $n, r\in \NN_0$. 
Consider an arbitrary family of linearly independent vectors $(w_1,\ldots, w_r, v_1,\bar v_1, \ldots, v_n, \bar v_n)$ in $\CC^{d}$, where $\bar v_j$ denotes the complex conjugate of $v_j$.
Assume that $w_1,\ldots, w_r\in \RR^d$.
For short we write $v_j=\hat v_j+i\check v_j$, with $\hat v_j,\check v_j\in \RR^d$.
Then the family of vectors
\[
(w_1,\ldots, w_r,\hat v_1, \check v_1,\ldots,\hat v_n, \check v_n)
\]
is linear independent in $\RR^d$.
\end{lem}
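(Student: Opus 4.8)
The statement is the standard decomplexification fact: if a family consisting of real vectors $w_1,\dots,w_r$ together with conjugate pairs $v_1,\bar v_1,\dots,v_n,\bar v_n$ is $\CC$-linearly independent in $\CC^d$, then the real family $(w_1,\dots,w_r,\hat v_1,\check v_1,\dots,\hat v_n,\check v_n)$ is $\RR$-linearly independent in $\RR^d$. The cleanest route is by contraposition: assume a nontrivial real linear relation among $w_1,\dots,w_r,\hat v_1,\check v_1,\dots,\hat v_n,\check v_n$ and manufacture from it a nontrivial $\CC$-linear relation among $w_1,\dots,w_r,v_1,\bar v_1,\dots,v_n,\bar v_n$.

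First I would write a hypothetical dependence
\[
\sum_{k=1}^r a_k w_k + \sum_{j=1}^n \big(b_j \hat v_j + c_j \check v_j\big) = 0, \qquad a_k,b_j,c_j\in\RR,
\]
not all coefficients zero. The key algebraic identity is that $\hat v_j = \tfrac12(v_j+\bar v_j)$ and $\check v_j = \tfrac1{2i}(v_j-\bar v_j)$, so $b_j\hat v_j + c_j\check v_j = \tfrac12(b_j - i c_j)v_j + \tfrac12(b_j + i c_j)\bar v_j$. Substituting this into the relation above rewrites it as a $\CC$-linear combination
\[
\sum_{k=1}^r a_k w_k + \sum_{j=1}^n \Big(\tfrac12(b_j - i c_j)\,v_j + \tfrac12(b_j + i c_j)\,\bar v_j\Big) = 0.
\]
By the assumed $\CC$-linear independence of $(w_1,\dots,w_r,v_1,\bar v_1,\dots,v_n,\bar v_n)$ every coefficient must vanish: $a_k=0$ for all $k$, and $b_j - i c_j = 0$ as well as $b_j + i c_j = 0$ for all $j$. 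Adding and subtracting the last two equations gives $b_j = 0$ and $c_j = 0$ for all $j$. Hence the original real relation was trivial, which is the desired contradiction, and the real family is $\RR$-linearly independent.

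The only point requiring a word of care — and the mild ``obstacle'' — is bookkeeping: one must ensure the coefficients of $v_j$ and of $\bar v_j$ are treated as independent slots in the $\CC$-independence hypothesis (which is legitimate precisely because the family $(w_1,\dots,w_r,v_1,\bar v_1,\dots,v_n,\bar v_n)$ is assumed independent, so $v_j$ and $\bar v_j$ are genuinely distinct basis-type vectors and not, say, equal because of a hidden reality condition), and that $w_1,\dots,w_r\in\RR^d$ is used only to know these stay real and no conjugation issues arise for them. No dimension count beyond $d\gqq 2n+r$ is needed; that hypothesis merely guarantees such a family can exist. This completes the argument.
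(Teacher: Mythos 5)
Your proof is correct and follows essentially the same route as the paper's: both convert the hypothetical real relation into a $\CC$-linear relation via $\hat v_j=\tfrac12(v_j+\bar v_j)$, $\check v_j=\tfrac1{2i}(v_j-\bar v_j)$ (your coefficients $\tfrac12(b_j\mp ic_j)$ are exactly the paper's $c_j,d_j$) and then invoke the assumed $\CC$-independence to force all coefficients to vanish. No gaps.
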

\begin{proof}
Let $\alpha_1,\beta_1,\ldots,\alpha_n,\beta_n, \gamma_1,\ldots,\gamma_r\in \RR$ such that 
\[
\gamma_1w_1+\cdots +\gamma_r w_r+\alpha_1\hat v_1+\beta_1\check v_1+\cdots+\alpha_n \hat v_n+\beta_n \check v_n=0.
\]
Let $\hat g_j=\gamma_j$ and $\check g_j=0$, and for $j\in \{1,\ldots,n\}$
$\hat c_j=\hat d_j=\al_j/2$ and $\check c_j=-\check d_j=-\beta_j/2$. Then for $c_j=\hat c_j+i\check c_j$ 
and $d_j=\hat d_j+i\check d_j$ 
we have
\[
g_1w_1+\cdots+ g_rw_r+c_1  v_1+d_1 \bar v_1
+\ldots+c_n v_n+d_n \bar v_n=0.
\]
By Hypothesis $w_1,\ldots,w_r,v_1,\bar v_1, \ldots, v_n, \bar v_n$ is a family of linearly independent vectors in $\CC^{2d}$ and hence 
$g_1=\cdots=g_r=c_1=d_1=\cdots=c_n=d_n=0$ which is equivalent to $\gamma_1=\cdots=\gamma_r=\al_1=\beta_1=\cdots=\alpha_n=\beta_n=0$ as desired.
\end{proof}
In the sequel, we characterize in Lemmas \ref{lem:ida} and \ref{lem:vuelta} under the non-resonance hypothesis (\ref{eq:rationallyindependent1}) given in Remark~\ref{rem:rationallyindependent1} when the function
$\omega(x)\ni u\mapsto |u|$ is constant, which is the statement of Theorem 3.1 item i), 
and compute the norm of (\ref{eq:wthetarepre1}). 
Using representation (\ref{eq:wthetarepre1}) of $\om(x)$ defined by  (\ref{eq:omegalimit}) we prepare the statement of Lemmas \ref{lem:ida} and \ref{lem:vuelta}. Lemma \ref{lem:ida} yields the necessity, while Lemma \ref{lem:vuelta} states the sufficiency of the normal growth condition (\ref{e:non-normal growth}) of Theorem~3.3. 
The  Pythagoras theorem yields
\begin{align*}
&|\sum_{k=1}^{m}e^{i\theta_k t}v_k|^2\\
&=
|{v_1}|^2+4\<{v_1},\sum_{k=1}^{n}
\Big(\cos(\theta_{2k} t)\hat v_{2k}-\sin(\theta_{2k} t)\check v_{2k}\Big)\>
+4|\sum_{k=1}^{n}
\Big(\cos(\theta_{2k} t)\hat v_{2k}-\sin(\theta_{2k} t)\check v_{2k}\Big)|^2\\
&=
|{v_1}|^2+4
\sum_{k=1}^{n}
\big(\cos(\theta_{2k} t)
\<{v_1},\hat v_{2k}\>
-
\sin(\theta_{2k} t)
\<{v_1},\check v_{2k}\>\big)
+4|\sum_{k=1}^{n}
\Big(\cos(\theta_{2k} t)\hat v_{2k}-\sin(\theta_{2k} t)\check v_{2k}\Big)|^2.
\end{align*}
We continue with the last term on the right-hand side of the preceding 
expression omitting the prefactor 
\begin{align*}
&|\sum_{k=1}^{n}
\Big(\cos(\theta_{2k} t)\hat v_{2k}-\sin(\theta_{2k} t)\check v_{2k}\Big)|^2\\
&\qquad=
\sum_{k=1}^{n}
|\cos(\theta_{2k} t)\hat v_{2k}-\sin(\theta_{2k} t)\check v_{2k}|^2\\
&\qquad\qquad+
\sum_{k\not =k'}
\<\cos(\theta_{2k} t)\hat v_{2k}-\sin(\theta_{2k} t)\check v_{2k},
\cos(\theta_{2k'} t)\hat v_{2k'}-\sin(\theta_{2k'} t)\check v_{2k'}\>\\
&\qquad=\sum_{k=1}^{n}
\Big(\cos^2(\theta_{2k} t)
|\hat v_{2k}|^2+\sin^2(\theta_{2k} t)|\check v_{2k}|^2-2
\cos(\theta_{2k} t)\sin(\theta_{2k} t)
\<\hat v_{2k},
\check v_{2k}
\>\Big)\\
&\qquad\qquad+
\sum_{k\not =k'}
\cos(\theta_{2k} t)\cos(\theta_{2k'} t)
\<\hat v_{2k},\hat v_{2k'}\>
+
\sum_{k\not =k'}
\sin(\theta_{2k} t)\sin(\theta_{2k'} t)
\<\check v_{2k},\check v_{2k'}\>\\
&\qquad\qquad
-\sum_{k\not =k'}
\sin(\theta_{2k} t)\cos(\theta_{2k'} t)
\<\check v_{2k},\hat v_{2k'}\>
-
\sum_{k\not =k'}
\cos(\theta_{2k} t)\sin(\theta_{2k'} t)
\<\hat v_{2k},\check v_{2k'}\>.
\end{align*}
Combining the preceding equalities  we deduce
\begin{align}\label{eq:sumacuadrado}
&|\sum_{k=1}^{m}e^{i\theta_k t}v_k|^2 =
|{v_1}|^2\nonumber\\
&+4
\sum_{k=1}^{n}
\cos(\theta_{2k} t)
\<{v_1},\hat v_{2k}\>
-4\sum_{k=1}^{n}
\sin(\theta_{2k} t)
\<{v_1},\check v_{2k}\>
+4|\sum_{k=1}^{n}
\Big(\cos(\theta_{2k} t)\hat v_{2k}-\sin(\theta_{2k} t)\check v_{2k}\Big)|^2\nonumber\\
&=
|{v_1}|^2+4
\sum_{k=1}^{n}
\cos(\theta_{2k} t)
\<{v_1},\hat v_{2k}\>
-4\sum_{k=1}^{n}
\sin(\theta_{2k} t)
\<{v_1},\check v_{2k}\>\nonumber\\
&\qquad \qquad+
4\sum_{k=1}^{n}
\Big(\cos^2(\theta_{2k} t)
|\hat v_{2k}|^2+\sin^2(\theta_{2k} t)|\check v_{2k}|^2-2
\cos(\theta_{2k} t)\sin(\theta_{2k} t)
\<\hat v_{2k},
\check v_{2k}
\>\Big)\nonumber\\
&\qquad\qquad+
4\sum_{k\not =k'}
\cos(\theta_{2k} t)\cos(\theta_{2k'} t)
\<\hat v_{2k},\hat v_{2k'}\>
+
4\sum_{k\not =k'}
\sin(\theta_{2k} t)\sin(\theta_{2k'} t)
\<\check v_{2k},\check v_{2k'}\>\nonumber\\
&\qquad\qquad
-4\sum_{k\not =k'}
\sin(\theta_{2k} t)\cos(\theta_{2k'} t)
\<\check v_{2k},\hat v_{2k'}\>
-
4\sum_{k\not =k'}
\cos(\theta_{2k} t)\sin(\theta_{2k'} t)
\<\hat v_{2k},\check v_{2k'}\>.
\end{align}
After rearrangement of the sums, we obtain
\begin{align}\label{eq:sumsquare}
|\sum_{k=1}^{m}e^{i\theta_k t}v_k|^2&=
|{v_1}|^2+4
\sum_{k=1}^{n}
\cos(\theta_{2k} t)
\<{v_1},\hat v_{2k}\>
-4\sum_{k=1}^{n}
\sin(\theta_{2k} t)
\<{v_1},\check v_{2k}\>\\
&\qquad \qquad+
4|\sum_{k=1}^{n}
\cos(\theta_{2k} t)\hat v_{2k}|^2+4|\sum_{k=1}^{n}
\sin(\theta_{2k} t)\check v_{2k}|^2
\nonumber\\
&\qquad\qquad
-8\sum_{k,k'}
\sin(\theta_{2k} t)\cos(\theta_{2k'} t)
\<\check v_{2k},\hat v_{2k'}\>
.\nonumber
\end{align}

\begin{lem}\label{lem:ida}
Assume that the family $({v_1},\hat v_2,\check v_2\ldots,\hat v_{2n},\check v_{2n})$ is orthogonal and
$|\hat v_{2k}|=|\check v_{2k}|$ for all $k$.
Then the function
\[
t\mapsto 
\Big|\sum_{k=1}^{m}e^{i\theta_k t}v_k\Big|
\]
is constant and has the value $|{v_1}|^2+4\sum_{k=1}^{n}|\hat v_{2k}|^2$ and consequently
$\omega(x)\ni u\mapsto |u|$
is constant.
\end{lem}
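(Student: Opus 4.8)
The plan is to substitute the hypotheses directly into the expansion (\ref{eq:sumsquare}) and observe the massive cancellation. Specifically, I would start from
\begin{align*}
\Big|\sum_{k=1}^{m}e^{i\theta_k t}v_k\Big|^2&=
|{v_1}|^2+4\sum_{k=1}^{n}\cos(\theta_{2k} t)\<{v_1},\hat v_{2k}\>-4\sum_{k=1}^{n}\sin(\theta_{2k} t)\<{v_1},\check v_{2k}\>\\
&\qquad+4\Big|\sum_{k=1}^{n}\cos(\theta_{2k} t)\hat v_{2k}\Big|^2+4\Big|\sum_{k=1}^{n}\sin(\theta_{2k} t)\check v_{2k}\Big|^2-8\sum_{k,k'}\sin(\theta_{2k} t)\cos(\theta_{2k'} t)\<\check v_{2k},\hat v_{2k'}\>,
\end{align*}
and then invoke orthogonality of the whole family $({v_1},\hat v_2,\check v_2,\ldots,\hat v_{2n},\check v_{2n})$. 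Orthogonality kills $\<{v_1},\hat v_{2k}\>=\<{v_1},\check v_{2k}\>=0$ for all $k$ (eliminating the two linear terms), kills all cross terms $\<\hat v_{2k},\hat v_{2k'}\>$ and $\<\check v_{2k},\check v_{2k'}\>$ for $k\neq k'$ (so the two squared-norm sums collapse to diagonal sums $\sum_k\cos^2(\theta_{2k}t)|\hat v_{2k}|^2$ and $\sum_k\sin^2(\theta_{2k}t)|\check v_{2k}|^2$), and kills every term $\<\check v_{2k},\hat v_{2k'}\>$ — both the off-diagonal ones and the diagonal ones $\<\check v_{2k},\hat v_{2k}\>$ — so the last sum vanishes entirely.

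After these cancellations I am left with
\[
\Big|\sum_{k=1}^{m}e^{i\theta_k t}v_k\Big|^2=|{v_1}|^2+4\sum_{k=1}^{n}\big(\cos^2(\theta_{2k}t)|\hat v_{2k}|^2+\sin^2(\theta_{2k}t)|\check v_{2k}|^2\big).
\]
Now I would apply the second hypothesis $|\hat v_{2k}|=|\check v_{2k}|$ to each summand, using $\cos^2+\sin^2=1$, so that the $k$-th term becomes simply $|\hat v_{2k}|^2$, independent of $t$. This yields
\[
\Big|\sum_{k=1}^{m}e^{i\theta_k t}v_k\Big|^2=|{v_1}|^2+4\sum_{k=1}^{n}|\hat v_{2k}|^2,
\]
which is a constant. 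Hence $t\mapsto\big|\sum_{k=1}^{m}e^{i\theta_k t}v_k\big|$ is constant; since every accumulation point $u\in\omega(x)$ is a limit of such values along a sequence $t_n\to\infty$, every $u\in\omega(x)$ satisfies $|u|^2=|{v_1}|^2+4\sum_{k=1}^n|\hat v_{2k}|^2$, so the map $\omega(x)\ni u\mapsto|u|$ is constant.

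This is a direct verification with no serious obstacle; the only point requiring a little care is bookkeeping — making sure that the orthogonality assumption is applied to \emph{all} the relevant inner products, including the diagonal terms $\<\check v_{2k},\hat v_{2k}\>$ which are precisely the ones dropped from the $-8\sum_{k,k'}$ sum. One should also note that the value stated in the lemma is literally $|v_1|^2+4\sum_{k=1}^n|\hat v_{2k}|^2$ (so the lemma's phrase ``has the value'' refers to the squared norm, matching what comes out of the computation). Everything else follows immediately from (\ref{eq:sumsquare}) and the Pythagoras-type expansion that precedes it, which I take as given.
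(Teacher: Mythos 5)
Your proof is correct and is essentially identical to the paper's: the paper likewise substitutes the orthogonality hypothesis into the Pythagoras expansion (\ref{eq:sumacuadrado})/(\ref{eq:sumsquare}) to reduce to the diagonal terms and then applies $|\hat v_{2k}|=|\check v_{2k}|$ with $\cos^2+\sin^2=1$. Your observation that the stated value is really that of the squared norm is also accurate.
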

\begin{proof}
The orthogonality hypothesis in relation (\ref{eq:sumacuadrado}) yields
\begin{align*}
|\sum_{k=1}^{m}e^{i\theta_k t}v_k|^2&=
|{v_1}|^2+
4\sum_{k=1}^{n}
\cos^2(\theta_{2k} t)|\hat v_{2k}|^2+4\sum_{k=1}^{n}
\sin^2(\theta_{2k} t)|\check v_{2k}|^2.
\end{align*}
Since $|\hat v_{2k}|=|\check v_{2k}|$ for all $k$, the Pythagoras identity yields the desired result.
\end{proof}

\begin{lem}\label{lem:vuelta}
If the function
\begin{equation}\label{eq:constantefunction}
\omega(x)\ni u\mapsto |u|
\end{equation}
is constant and the angles $\theta_2,\ldots,\theta_{2n}$ are rationally independent according to Remark \ref{rem:rationallyindependent1} then the family of  $\RR^d$-valued vectors  $({v_1},\hat v_2,\check v_2\ldots,\hat v_{2n},\check v_{2n})$ is orthogonal and satisfies
$|\hat v_{2k}|=|\check v_{2k}|$ for all $k=1,\ldots,n$.
\end{lem}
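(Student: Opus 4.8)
The plan is to identify the $\omega$-limit set $\omega(x)$ with the image of an explicit torus map and then extract the claimed algebraic relations from the observation that the squared Euclidean norm of this map, being a finite trigonometric polynomial, must be constant. Write $v_{2k}=\hat v_{2k}+i\check v_{2k}$ and recast the representation (\ref{eq:wthetarepre1}) as
\[
\sum_{k=1}^{m}e^{i\theta_k t}v_k = v_1 + \sum_{k=1}^{n}\big(e^{i\theta_{2k}t}v_{2k}+e^{-i\theta_{2k}t}\bar v_{2k}\big)=F(\theta_2 t,\ldots,\theta_{2n}t),
\]
where $F\colon\TT^n\to\RR^d$ is the continuous map $F(\phi_1,\ldots,\phi_n):=v_1+\sum_{k=1}^{n}\big(e^{i\phi_k}v_{2k}+e^{-i\phi_k}\bar v_{2k}\big)$, which is $\RR^d$-valued since each summand equals $2\operatorname{Re}(e^{i\phi_k}v_{2k})$ and $v_1\in\RR^d$. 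Under the non-resonance hypothesis (\ref{eq:rationallyindependent1}) the linear flow $t\mapsto(\theta_2 t,\ldots,\theta_{2n}t)$ on $\TT^n$ is minimal (Kronecker--Weyl; cf. Corollary 4.2.3 in \cite{VIANA} as already used in Remark \ref{rem:explanationformula}), so every forward orbit is dense in $\TT^n$, and by continuity of $F$ and compactness of $\TT^n$ one gets $\omega(x)=F(\TT^n)$.

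By hypothesis $u\mapsto|u|$ is constant on $\omega(x)=F(\TT^n)$, hence $\phi\mapsto|F(\phi)|^2$ is constant on $\TT^n$. I would now expand $|F(\phi)|^2$ using the symmetric $\CC$-bilinear extension $u\cdot w$ of the real inner product (equivalently, one can start from (\ref{eq:sumsquare}) and convert the products of sines and cosines to exponentials). This yields
\[
|F(\phi)|^2=v_1\cdot v_1+2\sum_{k=1}^{n}\big(e^{i\phi_k}\,v_1\cdot v_{2k}+e^{-i\phi_k}\,v_1\cdot\bar v_{2k}\big)+\sum_{k,k'}\Big(e^{i(\phi_k+\phi_{k'})}v_{2k}\cdot v_{2k'}+e^{i(\phi_k-\phi_{k'})}v_{2k}\cdot\bar v_{2k'}\Big)+(\text{conjugate terms}),
\]
a trigonometric polynomial on $\TT^n$ whose nonzero characters are exactly $\pm e_k$, $\pm 2e_k$, and $\pm e_k\pm e_{k'}$ for $k\neq k'$, all of which are pairwise distinct elements of $\ZZ^n$. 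Since $|F(\phi)|^2$ is constant, uniqueness of Fourier coefficients forces every non-constant coefficient to vanish: the $e^{i\phi_k}$-coefficient gives $v_1\cdot v_{2k}=0$, whence $\langle v_1,\hat v_{2k}\rangle=\langle v_1,\check v_{2k}\rangle=0$; the $e^{2i\phi_k}$-coefficient gives $v_{2k}\cdot v_{2k}=0$, i.e. $|\hat v_{2k}|^2-|\check v_{2k}|^2+2i\langle\hat v_{2k},\check v_{2k}\rangle=0$, hence $|\hat v_{2k}|=|\check v_{2k}|$ and $\langle\hat v_{2k},\check v_{2k}\rangle=0$; and for $k\neq k'$ the $e^{i(\phi_k+\phi_{k'})}$- and $e^{i(\phi_k-\phi_{k'})}$-coefficients give $v_{2k}\cdot v_{2k'}=0$ and $v_{2k}\cdot\bar v_{2k'}=0$, whose real and imaginary parts together say that each of $\hat v_{2k},\check v_{2k}$ is orthogonal to each of $\hat v_{2k'},\check v_{2k'}$. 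Collecting all these, the family $(v_1,\hat v_2,\check v_2,\ldots,\hat v_{2n},\check v_{2n})$ is pairwise orthogonal with $|\hat v_{2k}|=|\check v_{2k}|$, which is the assertion.

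The main obstacle is bookkeeping rather than conceptual: one must verify carefully that the frequency vectors $\pm e_k$, $\pm 2e_k$, $\pm e_k\pm e_{k'}$ ($k\neq k'$) occurring in the expansion are genuinely distinct as elements of $\ZZ^n$ so that each relation can be isolated — this is immediate since $n\geq1$ and the indices are distinct, and it is the only place where non-resonance enters (to pass from the orbit to all of $\TT^n$). A minor point to state cleanly is the reduction to the normal form fixed before Remark \ref{rem:rationallyindependent1} (that $v_1\in\RR^d$, that the nonreal $v_k$ occur in conjugate pairs, and that $m=2n+1$), a harmless relabelling already set up in the surrounding text, together with Lemma \ref{lem:base}, which guarantees that the real and imaginary parts behave as expected.
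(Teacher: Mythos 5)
Your proposal is correct, and it reaches the full conclusion by a genuinely different route from the paper. Both arguments share the same first step: rational $2\pi$-independence of $\theta_2,\dots,\theta_{2n}$ plus Kronecker--Weyl density (Corollary 4.2.3 in \cite{VIANA}) upgrade ``$|u|$ constant on $\omega(x)$'' to ``$|F(\phi)|^2$ constant on all of $\TT^n$''. From there the paper works with the real expansion (\ref{eq:sumsquare}) and extracts the relations one at a time by evaluating the constant function at carefully chosen points of the torus (its Steps 1--4, with some delicate sign bookkeeping, e.g.\ the choices $x_k=\pm1$, $x_1=\tfrac{\sqrt 2}{2}$, and the quadratic polynomial in $\z$ in Step 4). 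You instead expand $|F(\phi)|^2$ via the symmetric $\CC$-bilinear form and invoke uniqueness of Fourier coefficients of a trigonometric polynomial on $\TT^n$: since the frequencies $\pm e_k$, $\pm 2e_k$, $\pm e_k\pm e_{k'}$ ($k\neq k'$) are pairwise distinct and nonzero, all corresponding coefficients vanish simultaneously, and the identities $v_1\cdot v_{2k}=0$, $v_{2k}\cdot v_{2k}=0$, $v_{2k}\cdot v_{2k'}=v_{2k}\cdot\bar v_{2k'}=0$ decode exactly into the orthogonality of $(v_1,\hat v_2,\check v_2,\dots,\hat v_{2n},\check v_{2n})$ together with $|\hat v_{2k}|=|\check v_{2k}|$ and $\langle\hat v_{2k},\check v_{2k}\rangle=0$. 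Your approach is more systematic and delivers all relations in one stroke, at the modest cost of introducing the complex bilinear pairing and the (elementary) uniqueness of Fourier coefficients; the paper's approach stays entirely in real coordinates and is self-contained but longer and more fragile in its case analysis. The one hypothesis you should state explicitly rather than in passing is that $\omega(x)=F(\TT^n)$ follows from minimality of the linear flow together with continuity of $F$ and compactness of $\TT^n$; with that in place the argument is complete.
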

\begin{proof}
By (\ref{eq:sumsquare}) we have
\begin{align*}
|\sum_{k=1}^{m}e^{i\theta_k t}v_k|^2&=
|{v_1}|^2+4
\sum_{k=1}^{n}
\cos(\theta_{2k} t)
\<{v_1},\hat v_{2k}\>
-4\sum_{k=1}^{n}
\sin(\theta_{2k} t)
\<{v_1},\check v_{2k}\>\\
&+
4|\sum_{k=1}^{n}
\cos(\theta_{2k} t)\hat v_{2k}|^2+4|\sum_{k=1}^{n}
\sin(\theta_{2k} t)\check v_{2k}|^2
\nonumber\\
&-8\sum_{k,k'}
\sin(\theta_{2k} t)\cos(\theta_{2k'} t)
\<\check v_{2k},\hat v_{2k'}\>
.\nonumber
\end{align*}
Since the angles $\theta_2,\ldots,\theta_{2n}$ are rationally independent, 
Corollary 4.2.3 in \cite{VIANA} and
the assumption that the function (\ref{eq:constantefunction}) is constant  
implies that the following function $F$ is constant:
\begin{align}\label{eq:funcionF}
F(x)&=|{v_1}|^2+
4\sum_{k=1}^{n}x_k\<{v_1},\hat v_{2k}\>
-4\sum_{k=1}^{n}y_k\<{v_1},\check v_{2k}\>+
4|\sum_{k=1}^{n}
x_k\hat v_{2k}|^2+4|\sum_{k=1}^{n} y_k\check v_{2k}|^2
\\
&\qquad\qquad
-8\sum_{k,k'}
y_k x_{k'}
\<\check v_{2k},\hat v_{2k'}\>,\nonumber
\end{align}
where $x=(x_1,\ldots,x_n)\in [-1,1]^n$
 and $x^2_k+y^2_k=1$ for all $k$.
 We point out that for each $x_k$ there are two solutions $y_k=\pm \sqrt{1-x^2_k}$ for the equation 
 $x^2_k+y^2_k=1$ and 
all combinations of signs of $y_k$ are admitted. 

\textbf{Step 1: } We start with the proof of $\<{v_1},\hat v_{2k}\>=0$ for all $k$. Since the function $F$ is constant, comparing the choices $x_k=1$ and $y_k=0$ for all $k$ with $x_k=-1$ and $y_k=0$ for all $k$ yields
\[
|{v_1}|^2+4\sum_{k=1}^{n}\<{v_1},\hat v_{2k}\>+
\sum_{k=1}^{n}|\hat v_{2k}|^2=|{v_1}|^2-4\sum_{k=1}^{n}\<{v_1},\hat v_{2k}\>+
\sum_{k=1}^{n}|\hat v_{2k}|^2,
\] 
which implies
\begin{equation}\label{eq:sumaw}
\sum_{k=1}^{n}\<{v_1},\hat v_{2k}\>=0.
\end{equation}
Comparing the choice $x_1=-1$, $x_k=1$ for $k\gqq 2$,  and consequently $y_k=0$ for all $k$
 with $x_1=1$, $x_k=-1$ for $k\gqq 2$,  and consequently $y_k=0$ for all $k$ we have
\begin{align*}
&|{v_1}|^2-4\<{v_1},\hat v_2\>+4\sum_{k=2}^{n}\<{v_1},\hat v_{2k}\>+4|-\hat v_2+\sum_{k=2}^{n}\hat v_{2k}|^2\\
&\qquad=
|{v_1}|^2+4\<{v_1},\hat v_2\>-4\sum_{k=2}^{n}\<{v_1},\hat v_{2k}\>+4|\hat v_2-\sum_{k=2}^{n}\hat v_{2k}|^2\\
&\qquad=
|{v_1}|^2+4\<{v_1},\hat v_2\>-4\sum_{k=2}^{n}\<{v_1},\hat v_{2k}\>+4|-\hat v_2+\sum_{k=2}^{n}\hat v_{2k}|^2,
\end{align*}
which implies
\begin{equation}\label{eq:sumaw1}
\<{v_1},\hat v_2\>-\sum_{k=2}^{n}\<{v_1},\hat v_{2k}\>=0.
\end{equation}
Combining (\ref{eq:sumaw}) and (\ref{eq:sumaw1}) we obtain $\<{v_1},\hat v_2\>=0$. Analogously it is shown
$\<{v_1},\hat v_{2k}\>=0$ for all $k\gqq 2$.
Switching the role of $x_k$ and $y_k$ in the preceding reasoning also shows
$\<{v_1},\check v_{2k}\>=0$
for all $k$.

\textbf{Step 2: } By Step 1 formula  (\ref{eq:funcionF}) boils down to
\begin{align*}
F(x)=&|{v_1}|^2+4|\sum_{k=1}^{n}
x_k\hat v_{2k}|^2+4|\sum_{k=1}^{n} y_k\check v_{2k}|^2
-8\sum_{k,k'}x_{k}y_{k'}
C_{k,k'},
\end{align*}
where $C_{k,k'}=\<\check v_{2k},\hat v_{2k'}\>$.
The choice 
$x_1=\frac{\sqrt{2}}{2}$, $x_k=\pm1$ for all $k\gqq 2$ implies $y_1=\pm
\frac{\sqrt{2}}{2}$ and $y_k=0$ for $k\gqq 2$.
Hence 
\begin{align*}
&|{v_1}|^2+
4|\frac{\sqrt{2}}{2}\hat v_{2}+\sum_{k=2}^{n}(\pm)\hat v_{2k}|^2+2 |\check v_{2}|^2
-4 C_{1,1}-4\sqrt{2} \sum_{k\gqq 2} (\pm)C_{k,1}\\
&=|{v_1}|^2+
4|\frac{\sqrt{2}}{2}\hat v_{2}+\sum_{k=2}^{n}(\pm)\hat v_{2k}|^2+2 |\check v_{2}|^2
+4 C_{1,1}+4\sqrt{2} \sum_{k\gqq 2} (\pm)C_{k,1}\\
\end{align*}
for any sequence of signs $\pm$.
Consequently, 
\[
C_{1,1}+\sqrt{2} \sum_{k\gqq 2} (\pm)C_{k,1}=0
\]
for any sequence of signs $\pm$. Therefore 
$C_{k,1}=0$ for all $k\gqq 1$.
Analogously, we infer that 
$C_{k,k'}=0$ for all $k,k'\gqq 1$.
This proves that $\<\hat v_{2k},\check v_{2k'}\>=0$ for all $k,k'$.

\textbf{Step 3: } By Step 2 we obtain
\begin{align*}
F(x)=&|{v_1}|^2+4|\sum_{k=1}^{n}
x_k\hat v_{2k}|^2+4|\sum_{k=1}^{n} y_k\check v_{2k}|^2.
\end{align*}
For any choice $x_1\in\{-1,1\}$, $x_k=1$ (which implies $y_k=0$) for all $k\gqq 2$,
the Pythagoras theorem yields
\begin{align*}
F(x)=&|{v_1}|^2+4|x_1\hat v_{2k}+\sum_{k=2}^{n}
\hat v_{2k}|^2=
|{v_1}|^2+4|\hat v_{2k}|^2+4|\sum_{k=2}^{n}
\hat v_{2k}|^2+8\<x_1\hat v_{2},\sum_{k=2}^{n}
\hat v_{2k}\>,
\end{align*}
which implies
\[
\<\hat v_{2},\sum_{k=2}^{n}
\hat v_{2k}\>=0,
\]
due to $x_1$ can be chosen $\pm 1$.
An analogous reasoning yields
\begin{equation}\label{eq:kkprime}
\sum_{\substack{
k=1\\
k\neq k'}
}^{n}\<\hat v_{2k'},\hat v_{2k}\>=0.
\end{equation}
for any $k'\gqq 1$.

For any choice $x_k\in\{-1,1\}$ (which implies $y_k=0$) for all $k$,
the Pythagoras theorem yields
\begin{align*}
F(x)&=|{v_1}|^2+4|\sum_{k=1}^{n} x_k\hat v_{2k}|^2=|{v_1}|^2+
4|x_1\hat v_{2}+x_2\hat v_{4}+\sum_{k\gqq 3} x_k\hat v_{2k}|^2\\
&=|{v_1}|^2+4|x_1\hat v_{2}+x_2\hat v_{4}|^2
+4|\sum_{k\gqq 3} x_k\hat v_{2k}|^2
+8\<x_1\hat v_{2}+x_2\hat v_{4},\sum_{k\gqq 3} x_k\hat v_{2k}\>.
\end{align*}
Specifying $x_k=1$ for all $k\gqq 3$ and comparing it with $x_k=-1$ for all $k\gqq 3$ implies
\[
\<x_1\hat v_{2}+x_2\hat v_{4},\sum_{k\gqq 3} \hat v_{2k}\>=0.
\]
Then 
\[
\<x_1\hat v_{2},\sum_{k\gqq 3} \hat v_{2k}\>+
\<x_2\hat v_{4},\sum_{k\gqq 3} \hat v_{2k}\>=0.
\]
Bearing in mind (\ref{eq:kkprime}),
summing and subtracting $\hat v_{4}$ and $\hat v_{2}$ in the corresponding sums above yields
\[
\<x_1\hat v_{2},-\hat v_{4}\>+
\<x_2\hat v_{4},-\hat v_{2}\>=0.
\]
Since $x_1,x_2\in \{-1,1\}$ is arbitrary, we infer
$\<\hat v_{2},\hat v_{4}\>=0$.
By analogous reasoning it is shown that
$\<\hat v_{2k},\hat v_{2k'}\>=0$ for all $k\neq k'$.
Switching the role of $x_k$ and $y_k$ in the preceding reasoning also shows
$\<\check v_{2k},\check v_{2k'}\>=0$ for all $k\neq k'$.

Combining Step 1-3 shows that the family
$({v_1},\hat v_2,\check v_2\ldots,\hat v_{2n},\check v_{2n})$ is orthogonal.

\textbf{Step 4: } In the sequel, we prove $|\hat v_{2k}|=|\check v_{2k}|$ for all $k$.
By Step 3 we obtain
\begin{align*}
F(x)=&|{v_1}|^2+4\sum_{k=1}^{n}
x^2_k|\hat v_{2k}|^2+4\sum_{k=1}^{n} y^2_k|\check v_{2k}|^2.
\end{align*}
The choice $x_1=\z$, $x_k=1$ for all $k\gqq 2$ implies $y_1=\pm \sqrt{1-\z^2}$, $y_k=0$ for all $k\gqq 2$. Therefore the quadratic polynomial
\begin{align*}
\z \mapsto F((\z,1,\ldots,1))&=|{v_1}|^2+4\sum_{k=2}^{n}
|\hat v_{2k}|^2
+4\z^2 |\hat v_{2k}|^2+
4(1-\z^2)|\check v_{2}|^2\\
&=
|{v_1}|^2+4\sum_{k=2}^{n}
|\hat v_{2k}|^2+4|\check v_{2}|^2
+4\z^2 (|\hat v_{2}|^2-|\check v_{2}|^2)
\end{align*}
is constant. Consequently, 
$|\hat v_{2}|^2=|\check v_{2}|^2$.
Analogously it is shown that 
$|\hat v_{2k}|^2=|\check v_{2k}|^2$ for all $k\gqq 2$.
\end{proof}

\section*{Acknowledgments}
The research of GBV has been supported by the Academy of Finland, via
the Matter and Materials Profi4 university profiling action.
GBV also would like to express
his gratitude to University of Helsinki for all the facilities used along
the realization of this work. 
The research of MAH has been supported by the 
proyecto de la Convocatoria 2020-2021: ``Stochastic dynamics of systems perturbed with small Markovian noise with applications in
biophysics, climatology and statistics'' of the School of Sciences (Facultad de Ciencias) at Universidad de los Andes. 
JCP acknowledges support from  CONACyT-MEXICO CB-250590.
The authors would like to thank prof. J. M. Pedraza (Physics Department, Universidad de los Andes) for useful comments which have led to the Subsection~\ref{ss:ext3} and prof. J. Goodrick (Mathematics department, Universidad de los Andes) for helpful comments on the introduction.

\bibliographystyle{amsplain}

\end{document}